\documentclass[12pt,a4paper]{amsart}  
\usepackage{graphicx,amssymb,stmaryrd,xspace,color}
\RequirePackage[raiselinks=false,colorlinks=true,citecolor=blue,urlcolor=black,linkcolor=blue,bookmarksopen=true]{hyperref}
\newcommand{\arxiv}[1]{\href{http://arxiv.org/pdf/#1}{arXiv:#1}}

\usepackage{eucal}

\DeclareMathAlphabet{\mathbbe}{U}{bbold}{m}{n}
\newcommand{\simplexcategory}{\mathbbe{\Delta}}
\newcommand{\newnabla}{\rotatebox[origin=c]{180}{\(\simplexcategory\)}}

\usepackage[margin=25mm]{geometry}			     

\newcommand{\dstrees}{\mathbf{H}}
\newcommand{\intconcat}{\vee}
\newcommand{\terminal}{1}

\usepackage{texdraw}
\input{txdtools}

\everytexdraw{%
	\drawdim pt \linewd 0.5 \textref h:C v:C
	\setunitscale 1
  \arrowheadtype t:F
  \arrowheadsize l:5 w:3
}
\newcommand{\Onedot}{
  \bsegment
    \move (0 0) \fcir f:0 r:2.5
  \esegment
}

\newcommand{\onedot}{
  \bsegment
    \move (0 0) \fcir f:0 r:2
  \esegment
}
  
\newcommand{\twoVtr}{
  \bsegment
    \move (0 0) \onedot 
    \lvec (-7 15) 
    \move (0 0)
    \lvec (7 15) 
  \esegment
}
\newcommand{\lowercut}{
  \bsegment
    \move (-17 4) \clvec (-7 9)(7 9)(17 4)
  \esegment
} 

\newcommand{\twolintr}{
  \bsegment
    \move (0 0) \onedot 
    \lvec (0 15) \onedot
    \lvec (0 30) \onedot
  \esegment
}
 
\newcommand{\twoskewtr}{
  \bsegment
    \move (0 0) \onedot 
    \lvec (-13 8) \onedot
    \move (0 0)
    \lvec (0 21) \onedot
  \esegment
} 

\newcommand{\uppercut}{
  \bsegment
    \move (-25 18) \clvec (-7 9)(7 9)(20 1)
  \esegment
} 

\usepackage[v2,all]{xy}
\newcommand{\ulpullback}[1][ul]{\save*!/#1-4ex/#1:(-1,1)@^{|-}\restore}
\newcommand{\dlpullback}[1][dl]{\save*!/#1-4ex/#1:(-1,1)@^{|-}\restore}
\newcommand{\urpullback}[1][ur]{\save*!/#1-4ex/#1:(-1,1)@^{|-}\restore}
\newcommand{\drpullback}[1][dr]{\save*!/#1-4ex/#1:(-1,1)@^{|-}\restore}

\usepackage{ stmaryrd }
\newcommand{\genmap}{\rightarrow\Mapsfromchar}

\def\eq{{\mathrm{eq}}}

\def\Map{{\mathrm{Map}}}

\def\M{M\"obius\xspace}

\newdir{ >}{{}*!/-7pt/@{>}}
  
\def\overarrow#1{{\vec{#1}}}
\def\nondeg{\overarrow}

\newcommand{\Phieven}{\Phi_{\text{\rm even}}}
\newcommand{\Phiodd}{\Phi_{\text{\rm odd}}}

\newcommand{\ov}{\overline}
\newcommand{\un}{\underline}
\newcommand{\unDelta}{\un\simplexcategory}
\newcommand{\Deltaact}{\simplexcategory_{\text{\rm act}}}

\newcommand{\culf}{CULF\xspace}
\newcommand{\culfsymb}{\mathrm{culf}}

\newcommand{\Decomp}{\kat{Dcmp}}

\newcommand{\vect}{\kat{vect}}
\newcommand{\Decbot}[1]{\operatorname{Dec}_\bot{}\kern-2pt{#1}}
\newcommand{\Dectop}[1]{\operatorname{Dec}_\top{}\kern-2pt{#1}}

\providecommand{\norm}[1]{\left| {#1}\right|}

\def\onto{\twoheadrightarrow}
\def\into{\hookrightarrow}
\newcommand{\shortsetminus}{\,\raisebox{1pt}{\ensuremath{\mathbb r}\,}}

\providecommand{\kat}[1]{\text{\textbf{\textsl{#1}}}}

\newcommand{\LIN}{\kat{LIN}}

\newcommand{\rat}{\rightarrowtail}

\newcommand{\upperstar}{^{\raisebox{-0.25ex}[0ex][0ex]{\(\ast\)}}}

\newcommand{\lowershriek}{_!}
\newcommand{\uppershriek}{^!}

\newcommand{\isopil}{\stackrel{\raisebox{0.1ex}[0ex][0ex]{\(\sim\)}}%
			{\raisebox{-0.15ex}[0.28ex]{\(\rightarrow\)}}}
\newcommand{\tensor}{\otimes}
\newcommand{\op}{^{\text{{\rm{op}}}}}

\newcommand{\Set}{\kat{Set}}

\newcommand{\Grpd}{{\mathcal{S}}}

\newcommand{\oneGrpd}{\kat{Grpd}}
\newcommand{\onegrpd}{\kat{grpd}}

\def\Sdot{{\ensuremath{S_\bullet}}}

\newcommand{\fatnerve}{\mathbf{N}}

\newcommand{\N}{\mathbb{N}}

\newcommand{\F}{\mathbb{F}}
\newcommand{\B}{\mathbb{B}}
\newcommand{\Q}{\mathbb{Q}}

\newcommand{\ground}{\Bbbk}

\newcommand{\MM}{\mathcal{M}}
\newcommand{\CC}{\mathcal{C}}
\newcommand{\DD}{\mathcal{D}}
\newcommand{\EE}{\mathcal{E}}

\newcommand{\name}[1]{\ulcorner #1\urcorner}

\newcommand{\Hom}{\operatorname{Hom}}
\newcommand{\Ext}{\operatorname{Ext}}
\newcommand{\Fun}{\operatorname{Fun}}
\newcommand{\PsFun}{\kat{PsFun}}
\newcommand{\Aut}{\operatorname{Aut}}
\newcommand{\id}{\operatorname{id}}

\newcommand{\Ar}{\operatorname{Ar}}

\setcounter{secnumdepth}{2}
\makeatletter
\def\subsection{\@startsection{subsection}{2}%
  \z@{.5\linespacing\@plus.7\linespacing}{.5\linespacing}%
  {\normalfont\bfseries}}
\makeatother

\newtheorem{lemma}{Lemma}[section]
\newtheorem{prop}[lemma]{Proposition}
\newtheorem{thm}[lemma]{Theorem}
\newtheorem{theorem}[lemma]{Theorem}
\newtheorem{cor}[lemma]{Corollary}
\theoremstyle{definition}

\newtheorem{taller}[lemma]{$\!\!$}

\newenvironment{blanko}[1]%
{\begin{taller}{\normalfont\bfseries  #1}\normalfont}%
{\end{taller}}

\newenvironment{blanko*}[1]{\begin{list}{\bf {#1} }%
{\setlength{\labelsep}{0mm}\setlength{\leftmargin}{0mm}%
\setlength{\labelwidth}{0mm}\setlength{\listparindent}{\parindent}%
\setlength{\parsep}{\parskip}\setlength{\partopsep}{0mm}}%
\item%
}{\end{list}}

\newenvironment{deff}%
{\begin{list}{\em Definition. }%
{\setlength{\labelsep}{0mm}\setlength{\leftmargin}{0mm}%
\setlength{\labelwidth}{0mm}\setlength{\listparindent}{\parindent}%
\setlength{\parsep}{\parskip}\setlength{\partopsep}{0mm}}%
\item}{\end{list}}

\newenvironment{proof*}[1]{\begin{list}{\em #1 }%
{\setlength{\labelsep}{0mm}\setlength{\leftmargin}{0mm}%
\setlength{\labelwidth}{0mm}\setlength{\listparindent}{\parindent}%
\setlength{\parsep}{\parskip}\setlength{\partopsep}{0mm}}%
\item}{\qed\end{list}}

\thanks{%
  The first author 
  was partially supported by grants 
  MTM2012-38122-C03-01,  
  MTM2013-42178-P,       
  2014-SGR-634,          
  MTM2015-69135-P,   
  MTM2016-76453-C2-2-P  (AEI/FEDER, UE), and   
  2017-SGR-932,          
  the second author 
  by 
  MTM2013-42293-P, 
  MTM2016-80439-P  (AEI/FEDER, UE),
  and
  2017-SGR-1725,
  and the third author   
  by
  MTM2013-42178-P and
  MTM2016-76453-C2-2-P (AEI/FEDER, UE)}

\author{Imma G\'alvez-Carrillo}
\address{Departament de Matem\`atiques
      \\Universitat Polit\`ecnica de Catalunya
	  }
\email{m.immaculada.galvez@upc.edu}

\author{Joachim Kock}
\address{Departament de Matem\`atiques
       \\Universitat Aut\`onoma de Barcelona
	   }
\email{kock@mat.uab.cat}

\author{Andrew Tonks}
\address{Department of Mathematics\\ 
University of Leicester
}
\email{apt12@le.ac.uk}

\title[Decomposition spaces and incidence coalgebras]{Decomposition spaces, incidence algebras and M\"obius inversion
I: basic theory}
\date{}

\begin{document}

\begin{abstract}
  This is the first in a series of papers devoted to the theory of decomposition
  spaces, a general framework for incidence algebras and \M inversion, where
  algebraic identities are realised by taking homotopy cardinality of
  equivalences of $\infty$-groupoids.  A decomposition space is a simplicial
  $\infty$-groupoid satisfying an exactness condition, weaker than the Segal
  condition, expressed in terms of active and inert maps in $\simplexcategory$.
  Just as the Segal condition expresses composition, the new exactness condition
  expresses decomposition, and there is an abundance of examples in
  combinatorics.
  
  After establishing some basic properties of decomposition spaces, the main
  result of this first paper shows that to any decomposition space there is an
  associated incidence coalgebra, spanned by the space of $1$-simplices, and
  with coefficients in $\infty$-groupoids.  We take a functorial viewpoint
  throughout, emphasising conservative ULF functors; these induce coalgebra
  homomorphisms.  Reduction procedures in the classical theory of incidence
  coalgebras are examples of this notion, and many are examples of decalage of
  decomposition spaces.  An interesting class of examples of decomposition
  spaces beyond Segal spaces is provided by Hall algebras: the Waldhausen
  \Sdot-construction of an abelian (or stable infinity) category is shown to be
  a decomposition space.

  In the second paper in this series we impose further conditions on
  decomposition spaces, to obtain a general \M inversion principle, and to ensure
  that the various constructions and results admit a homotopy cardinality.  In
  the third paper we show that the Lawvere--Menni Hopf algebra of \M intervals
  is the homotopy cardinality of a certain universal decomposition space.  Two
  further sequel papers deal with numerous examples from combinatorics.
  
  Note: The notion of decomposition space was arrived at independently by
  Dyckerhoff and Kapranov~\cite{Dyckerhoff-Kapranov:1212.3563} who call them
  unital $2$-Segal spaces.  Our theory is quite orthogonal to theirs: the
  definitions are different in spirit and appearance, and the theories differ in
  terms of motivation, examples, and directions.
\end{abstract}

\subjclass[2010]{18G30, 16T10, 06A11; 18-XX, 55Pxx}


\vspace*{-8pt}

\maketitle

\small

\vspace*{-14pt}

\tableofcontents

\vspace*{-14pt}

\noindent
Note.  This paper originally formed Sections 1 and 2 of the manuscript 
\cite{GKT:1404.3202}, which has now been split into six papers.

\normalsize

\setcounter{section}{-1}

\addtocontents{toc}{\protect\setcounter{tocdepth}{1}}

\section{Introduction}

The notion of incidence algebra of a locally finite poset is an important
construction in algebraic combinatorics, with applications to many fields
of mathematics. 
In this work we generalise this construction in three
directions:
(1) we replace posets by categories and $\infty$-categories;
(2) we replace scalar coefficients in a field by $\infty$-groupoids, working at the objective level, ensuring natively bijective proofs~\cite{GKT:HLA};
and most importantly:
(3) we replace the Segal condition, which essentially characterises $\infty$-categories 
among simplicial $\infty$-groupoids, by a weaker condition that still allows
the construction of incidence algebras.
Simplicial $\infty$-groupoids satisfying this axiom
are called decomposition spaces, seen as a systematic
framework for decomposing
structures, whereas categories constitute the
systematic
framework for
composing structures.

In the present work we focus on incidence \emph{coalgebras};
incidence algebras are just the convolution algebras given by their linear duals.
The fundamental role played by coalgebras was established by Rota
and his collaborators, the work with Joni~\cite{JoniRotaMR544721} being a
milestone.

We briefly preface the historically motivated introduction below with
a preview of
one of the examples that motivated us, and
which will serve as a running example.

\begin{blanko}{Running example: the  Hopf
    algebra of rooted trees.} (We return to this example in \ref{ex:trees-decomp},
\ref{ex:trees-coalg},
\ref{ex:trees-bialg},
\ref{ex:CK}.)
\label{ex:preintro}
The Butcher--Connes--Kreimer Hopf algebra of rooted trees \cite{Butcher:1972,Dur:1986,Kreimer:9707029}
is the free commutative
  algebra on the set of iso-classes of rooted trees $T$,
  with the comultiplication defined by summing over certain admissible cuts $c$:
  \begin{equation}\label{eq:preintro}
  \Delta(T) \ = \sum_{c\in \operatorname{adm.cuts}(T)} P_c \tensor R_c
  \end{equation}
  An admissible cut $c$ partitions the nodes of $T$ into two subsets or `layers'
  \begin{eqnarray}\nonumber\\\label{eq:preintro2}
    \\[-1cm]
  \begin{texdraw}
      \setunitscale 0.7
      \bsegment
	\move (0 0) \twoVtr
	\move (-7 15) \twoskewtr
	\move (7 15) \twolintr
	\move (0 15)
	\uppercut
	\htext (-25 0){\footnotesize $R_c$}
	\htext (30 30){\footnotesize $P_c$}
      \esegment
  \end{texdraw}\nonumber
  \end{eqnarray}
 One layer must form a rooted subtree $R_c$ (or be empty), and its complement forms the `crown', a subforest $P_c$ regarded as a monomial of trees.

  We can formalise this construction as follows.
  Let $\dstrees_k$ denote the groupoid of
  forests with $k-1$ compatible admissible cuts, partitioning the forest into $k$ layers (which may be empty). 
 These form a simplicial groupoid $\dstrees$, where simplicial degeneracy maps repeat a cut, inserting an empty layer, and
   face maps forget a cut, joining adjacent layers, or discard
   the top or bottom layer.

  The comultiplication \eqref{eq:preintro} arises from this simplicial groupoid by a pull-push 
  formula (see \ref{comult}, \ref{ex:trees-coalg} below): for a tree $T\in \dstrees_1$, take the homotopy sum over the
  homotopy fibre $d_1^{-1}(T) \subset \dstrees_2$, and for each element $c$ in the fibre return
  the pair $(d_2 c, d_0 c)$ consisting of the two layers.  Finally take homotopy 
  cardinality to arrive at $P_c \tensor R_c$.

  We note
  three things about this construction.
  Firstly, it is essential to work with simplicial
  groupoids rather than simplicial sets: had we passed to sets of
  iso-classes (of forests with cuts), crucial information would be lost, essentially
  because trees with a cut admit isomorphisms that do not fix the
  underlying tree --- see \cite{GKT:ex} for detailed explanation of this
  point.
  Secondly, we took homotopy cardinality at the last step, but in fact the whole
  construction is so formal and natural that it works on the `objective level'
  of groupoid slices.  Refraining from taking cardinality yields a natively
  `bijective' version.  Finally, and most importantly, this simplicial groupoid
  is not a Segal object: that is, is not the (fat) nerve of a category.
  Indeed, the Segal condition would imply that any tree with an
  admissible cut could be reconstructed uniquely knowing just the layers above
  and below the cut.  But this is manifestly false: there are \emph{many} trees
  with cuts that have the same layers as \eqref{eq:preintro2}.
  
  The main discovery is that there is
  a weaker condition than the
  Segal condition that allows the construction of a coassociative incidence 
  coalgebra: this is the decomposition-space axiom that we introduce in \S\ref{sec:decomp}.  It has a clear combinatorial interpretation (see the pictures in  \ref{ex:trees-decomp}),
  has a clean categorical description as an 
  exactness condition~\ref{def:decomp}, and is a general condition satisfied also by  examples from other areas of mathematics, such as  the Waldhausen $\Sdot$-construction~\ref{Sdot}. See  Dyckerhoff and  Kapranov~\cite{Dyckerhoff-Kapranov:1212.3563} for further outlook.
\end{blanko}

\subsection*{Background and motivation}

Leroux's notion of \M category \cite{Leroux:1975} generalises at the same 
time locally finite posets (Rota~\cite{Rota:Moebius}) and Cartier--Foata
finite-decomposition monoids~\cite{Cartier-Foata}, the two classical settings 
for incidence (co)algebras and \M inversion.
The finiteness conditions in the definition of \M category ensure that the 
comultiplication law 
\begin{equation}\label{Delta-f}\Delta(f)\:=\sum_{b \circ a=f}a\otimes b \end{equation}
is well defined on the vector space spanned by the set of arrows.
This defines the classical incidence coalgebra.

An important advantage of having the classical settings of posets and monoids
on the same footing is
they may then be connected by an appropriate class of functors, the 
{\em \culf} functors
(standing for `conservative'
and `ULF' = `unique lifting of factorisations'; see \S\ref{sec:cULF}).
In particular it gives a nice explanation of the important process of reduction,
to get the most interesting algebras out of posets,
a process that was sometimes rather ad hoc.
For the most classical example of this process,
consider the divisibility poset $(\N^\times,\mid)$ as a category.
It admits a \culf functor to the multiplicative monoid $(\N^\times,\times)$,
considered as a category with only one object.
This functor induces a homomorphism of incidence coalgebras
which is precisely the reduction map from the `raw' incidence coalgebra of the
divisibility poset to its reduced incidence coalgebra, which is isomorphic to the
Cartier--Foata incidence coalgebra of the multiplicative monoid.

Shortly after Leroux's work, D\"ur~\cite{Dur:1986} studied more involved categorical
structures to extract further examples of incidence 
algebras and study their \M functions.
In particular he realised
the Hopf algebra of rooted trees
as the reduced incidence coalgebra of a certain category of
root-preserving forest embeddings, modulo the equivalence 
relation that identifies two root-preserving forest embeddings if their 
complement crowns are isomorphic forests (see~\ref{ex:CK}).
Another prominent example fitting into D\"ur's formalism 
is the Fa\`a di Bruno bialgebra, previously obtained in \cite{JoyalMR633783}
from the category of surjections, which is however not a \M category.

Our work on Fa\`a di Bruno formulae in bialgebras of
trees~\cite{GalvezCarrillo-Kock-Tonks:1207.6404} prompted us to look for a more
general version of Leroux's theory, which would naturally realise the Fa\`a di Bruno and
Butcher--Connes--Kreimer bialgebras directly as incidence coalgebras.  A sequence of
generalisations and simplifications of the theory led to the notion of
decomposition space which is the central notion of the present work.

\subsection*{Abstraction steps: from numbers to sets, and from sets to 
$\infty$-groupoids}

The first abstraction step is to follow the objective method, pioneered in this context by
Lawvere and Menni~\cite{LawvereMenniMR2720184}, working directly with the
combinatorial objects, using linear algebra with coefficients
in $\Set$ rather than working with numbers and functions on the vector
spaces spanned by the objects.

To illustrate the objective method, observe that a vector in the free vector
space on a set $S$ is just a collection of scalars indexed by (a finite subset
of) $S$.  The objective counterpart is a family of sets indexed by $S$, i.e.~an
object in the slice category $\Set_{/S}$, and linear maps at this level are
given by spans $S \leftarrow M \to T$.  The \M inversion principle states an
equality between certain linear maps (elements in the incidence algebra).  At
the objective level, such an equality can be expressed as a bijection between
sets in the spans representing those linear functors (see the second paper in
this series~\cite{GKT:DSIAMI-2}).  In this way, algebraic identities are
revealed to be just the cardinality of bijections of sets, which carry much more
information.

In the present work we take coefficients in $\Grpd$, the $\infty$-category of
$\infty$-groupoids.  The role of vector spaces is then played by slice
$\infty$-categories $\Grpd_{/S}$.  In \cite{GKT:HLA} we have developed the
necessary `homotopy linear algebra' and the notion of homotopy cardinality,
extending many results of
Baez--Hoffnung--Walker~\cite{Baez-Hoffnung-Walker:0908.4305} who worked with
$1$-groupoids.  In order to be able to recover numerical or algebraic results by
taking cardinality, suitable finiteness conditions must be imposed, but as long
as we work at the objective level, where all results and proofs are naturally
bijective, these finiteness conditions do not play an essential role.  Outside
of this introduction we are not concerned with finiteness conditions and
cardinality in the present paper, but will return to them in the second and
third papers in this series~\cite{GKT:DSIAMI-2,GKT:MI}.

The price to pay for working at the objective level is the absence of additive
inverses: in particular, \M functions cannot exist in the usual form of an
alternating sum indexed by chains of different lengths.  However, we can prove
the following explicit equivalence of $\infty$-groupoids
(cf.~\cite{GKT:DSIAMI-2}):
\begin{align*}
\zeta * \Phieven
 &\;\;\simeq\;\; \varepsilon\;\; +\;\; \zeta * \Phiodd.
\end{align*}
We shall not here go into the definition of these $\infty$-groupoids.
The point we wish to make 
is that upon taking homotopy cardinality,
under the appropriate finiteness assumptions, and putting $\mu = 
\norm{\Phieven}-\norm{\Phiodd}$,
one
recovers the usual \M inversion formula $\zeta * \mu = \varepsilon$,
which has thus been given a `bijective' interpretation.

There are two levels of finiteness conditions needed in order
to take cardinality and arrive at algebraic (numerical) 
results~\cite{GKT:DSIAMI-2}: namely, just in
order to obtain a numerical coalgebra, for each arrow $f$ and for each $n\in\N$,
there should be only finitely many decompositions of $f$ into a chain of $n$
arrows.  Second, in order to obtain also \M inversion, the following additional
finiteness condition is needed: for each arrow $f$, there is an upper bound on the 
number of non-identity arrows in a chain of arrows composing to $f$.  The latter condition 
is important in its own right, as it is the condition for the existence of a 
length filtration (cf.~\cite[\S 6]{GKT:DSIAMI-2}, useful in many 
applications~\cite{GalvezCarrillo-Kock-Tonks:1207.6404,Kock:1411.3098,Kock:1512.03027}).

The importance of chains of arrows naturally suggests a simplicial viewpoint,
regarding a category $\CC$ as a simplicial set via its nerve $N\CC$.  Leroux's
theory can be formulated in terms of simplicial sets, and many of the arguments
then rely on certain simple pullback conditions, the first being the Segal
condition which characterises categories among simplicial sets.  Most
importantly, in our exploitation of this simplicial viewpoint the
comultiplication \eqref{Delta-f} can be written in terms of $N\CC$ as a
push-pull formula, $\Delta=(d_2,d_0)\lowershriek\circ d_1\upperstar$.

The fact that combinatorial objects typically have symmetries prompted the
upgrade from sets to groupoids, in fact a substantial conceptual 
simplification~\cite{GalvezCarrillo-Kock-Tonks:1207.6404}.
This upgrade is essentially straightforward, as long as the
notions involved are taken in a correct homotopy sense: bijections of sets are replaced
by equivalences of groupoids; the slices playing the role of vector spaces are
homotopy slices, the pullbacks and fibres involved in the functors are homotopy
pullbacks and homotopy fibres, and the sums are homotopy sums (i.e.~colimits
indexed by groupoids, just as ordinary sums are colimits indexed by sets).  
In this setting
  one may abandon also the 
strict notion of simplicial object in favour of a pseudo-functorial analogue.
For example,
the monoidal nerve of  $(\B,+,0)$, the monoidal groupoid of finite sets and bijections under disjoint union,
is actually only a pseudofunctor $\mathbf B:\simplexcategory\op\to\onegrpd$
(see \ref{monoidalgroupoids}--\ref{prop:BM}).
This level of abstraction allows us to state for example that the incidence
algebra of $\mathbf B$ is the category of species with the Cauchy 
product~\cite[\S 2.1]{GKT:ex}
(suggested as an exercise by Lawvere and Menni~\cite{LawvereMenniMR2720184}).

While it is doable to handle the $2$-category theory involved to
deal with groupoids, pseudo-functors, pseudo-natural isomorphisms, and so on,
much conceptual clarity is obtained by passing immediately to
the $\infty$-category $\Grpd$ of $\infty$-groupoids: thanks to the monumental effort of 
Joyal~\cite{Joyal:qCat+Kan}, \cite{Joyal:CRM}, Lurie~\cite{Lurie:HTT} and others,
$\infty$-groupoids can now be handled efficiently. At the
elementary level where we work, all that is needed is some basic knowledge
about (homotopy) pullbacks and (homotopy) sums, and everything looks very much like
the category of sets.  So we work throughout with certain simplicial
$\infty$-groupoids.  The appropriate notion of weak category in $\infty$-groupoids is that of Rezk
complete Segal space~\cite{Rezk:MR1804411}.  Our theory at this level says that
for any Rezk complete Segal space there is a natural incidence coalgebra
defined with coefficients in $\infty$-groupoids (this is a special case of Theorem~\ref{thm:comultcoass})
and that the objective
sign-free \M inversion principle holds \cite{GKT:DSIAMI-2}.

\subsection*{The idea of decomposition spaces}

The final abstraction step, which becomes the starting point for the paper,
is to notice that in fact neither the Segal condition
nor the Rezk condition is needed in full in order to get a (co)associative
(co)algebra and a \M inversion principle.  Coassociativity follows from (in fact
is essentially equivalent to) the {\em decomposition-space axiom} (see
\S\ref{sec:decomp} for the axiom,  and the discussion at the beginning of 
\S\ref{sec:COALG} for its derivation from
coassociativity): a decomposition space
is a simplicial $\infty$-groupoid $X:\simplexcategory\op\to\Grpd$ sending active-inert pushout
squares in $\simplexcategory$ to pullbacks. 
Whereas the Segal
condition is the expression of the ability to compose morphisms, the new
condition is about the ability to decompose, which of course in general is
easier to achieve than composability.

It is likely that all incidence (co)algebras can be realised
directly (without imposing a reduction)
as incidence (co)algebras of 
decomposition spaces.
If a reduced incidence algebra construction is known,
the decomposition space can be found by analysing the reduction step.
For example, D\"ur~\cite{Dur:1986} realises the $q$-binomial
coalgebra as the reduced incidence coalgebra of the category of
finite-dimensional vector spaces over a finite field and linear injections, by
imposing the equivalence relation identifying two linear injections if their
quotients are isomorphic.  Trying to realise the reduced incidence coalgebra
directly as a decomposition space immediately leads us to
the Waldhausen \Sdot-construction, which is a general class of examples:
we show that for any abelian category or stable $\infty$-category, 
the Waldhausen \Sdot-construction is a decomposition space (which is not 
Segal), cf.~\ref{Sdot}.
Under the appropriate finiteness conditions, the resulting incidence algebras
include the Hall algebras, as well as the derived Hall algebras first
constructed by To\"en~\cite{Toen:0501343}.

Other examples of coalgebras that can be realised as incidence coalgebras of
decomposition spaces but not of categories are Schmitt's Hopf algebra of
graphs~\cite{Schmitt:1994} and the Butcher--Connes--Kreimer Hopf algebra of
rooted trees~\cite{Connes-Kreimer:9808042}.
In a sequel paper 
\cite{GKT:restriction}, these examples are subsumed as examples of decomposition
spaces induced from restriction species and directed restriction species.

\smallskip

For our present purposes, the appropriate notion of morphism between decomposition spaces is that of
\culf functor, as these induce coalgebra homomorphisms (\ref{lem:coalg-homo}).  Many
relationships between incidence coalgebras, and in particular most of the
reductions that play a central role in the classical theory (from
Rota~\cite{Rota:Moebius} and D\"ur~\cite{Dur:1986} to
Schmitt~\cite{Schmitt:1994}), are induced from \culf functors.  The
simplicial viewpoint taken in this work reveals furthermore that many of these
\culf functors are actually instances of the notion of \emph{decalage} 
\ref{Dec},
which goes back to Lawvere~\cite{Lawvere:ordinal} and Illusie~\cite{Illusie2}.
Decalage is in fact an important ingredient in the theory to relate
decomposition spaces to Segal spaces: we observe that the decalage of a
decomposition space is a Segal space~\ref{thm:decomp-dec-segal}.

\medskip

Throughout we have strived for deriving all results from elementary principles,
such as pullbacks, factorisation systems and other universal constructions.  It
is also characteristic for our approach that we are able to reduce many
technical arguments to simplicial combinatorics.  The main notions are
formulated in terms of the active-inert factorisation system in
$\simplexcategory$ (see \ref{generic-and-free}).  To establish coassociativity
we explore also $\unDelta$ (the algebraist's Delta, including the empty ordinal)
and establish and exploit a universal property of its twisted arrow
category~(\S\ref{sec:DD}).  Sequels to this paper further vindicate this
philosophy: In \cite{GKT:MI}, in order to construct the universal decomposition
space of intervals, we study the category $\Xi$ of finite strict intervals, yet
another variation of the simplex category, related to it by an adjunction.  In
\cite{GKT:restriction}, as a general method for establishing functoriality in
inert maps, we study a certain category $\newnabla$ of convex correspondences
in $\unDelta$.  These `simplicial preliminaries' are likely to have applications
also outside the theory of decomposition spaces.

\subsection*{Related work: $2$-Segal spaces of Dyckerhoff and Kapranov}

The notion of decomposition space was arrived at independently
by Dyckerhoff and Kapranov~\cite{Dyckerhoff-Kapranov:1212.3563}:
a decomposition space is essentially the same thing as what they call
a unital $2$-Segal space.  We hasten to give them full credit
for having arrived at the notion first.  Unaware of their work,
we arrived at the same notion from a very different path, and
the theory we have developed for it is mostly orthogonal to
theirs.  

The definitions are different in appearance: the definition of
decomposition space refers to preservation of certain pullbacks,
whereas the definition of $2$-Segal space (reproduced in
\ref{DK} below) refers to triangulations of convex polygons.
The coincidence of the notions was noticed by Mathieu Anel
because two of the basic results are the same: specifically, the
characterisation in terms of decalage and Segal spaces (our
Theorem~\ref{thm:decomp-dec-segal}) and the result that the
Waldhausen \Sdot-construction of a stable $\infty$-category is a
decomposition space (our Theorem~\ref{thm:WaldhausenS}) were
obtained independently (and first)
in~\cite{Dyckerhoff-Kapranov:1212.3563}.

We were motivated by rather elementary aspects of combinatorics
and quantum field theory, and our examples are all drawn from
incidence algebras and \M inversion, whereas Dyckerhoff and Kapranov
were motivated
by representation theory, geometry, and homological algebra, and
develop a theory with a much vaster range of examples in mind:
in addition to Hall algebras and Hecke algebras they find cyclic
bar construction, mapping class groups and surface geometry (see
also \cite{Dyckerhoff-Kapranov:1306.2545} and
\cite{Dyckerhoff-Kapranov:1403.5799}), construct a Quillen model
structure and relate to topics of interest in higher category
theory such as $(\infty,2)$-categories and operads.

In the end we think our contribution is just a little corner of
a vast theory, but an important little corner, and we hope that
our viewpoints and insights will prove useful also for
the rest of the theory.

\subsection*{Related work on \M categories}

Where incidence algebras and \M inversion are concerned, our work descends from
Leroux et al.~\cite{Content-Lemay-Leroux,Leroux:1975,Leroux:IJM},
D\"ur~\cite{Dur:1986}, and Lawvere--Menni~\cite{LawvereMenniMR2720184}.

There is a different notion of \M category, due to Haigh~\cite{Haigh}.  The two
notions have been compared, and to some extent unified, by
Leinster~\cite{Leinster:1201.0413}, who calls Leroux's \M inversion {\em fine}
and Haigh's {\em coarse}, as it only depends on the underlying graph of the
category.  We should mention also the $K$-theoretic \M inversion for
quasi-finite EI categories of L\"uck and collaborators \cite{Luck:1989},
\cite{Fiore-Luck-Sauer:0908.3417}.

  The classical theory of incidence algebras of posets reached a culmination
with Schmitt's paper \cite{Schmitt:1994}.
Subsequently, Ray and Schmitt~\cite{Ray-Schmitt} introduced a
certain category of {\em cell-sets} consisting of sets equipped
with an equivalence relation (and a compatible dimension function),
whose morphisms are given by a clever multiset construction.
They showed that coalgebraic structures could be defined at this
level, prior to taking free vector spaces, and in this sense
their theory can be seen as a precursor to the fullblown
objective method of slices (indeed, their multiset morphisms are
subsumed in the natural notion of linear functor between
slices).  Although cell-sets have equivalence relations
built in, they do not account for symmetries in the same
automatic way as groupoids do, and essentially their examples
are still poset based.  It seems likely that the main structures
and constructions of~\cite{Ray-Schmitt} can be subsumed in the theory of decomposition
spaces, and we hope to get the opportunity to
take up this issue on a later occasion.

\subsection*{Outline of the present paper, section by section}

We begin in Section~\ref{sec:groupoids} with a review of some elementary notions
from the theory of $\infty$-categories, to render the paper accessible also to
readers without prior experience with these notions.
Section~\ref{sec:simplprelim} contains a few preliminaries on simplicial objects
and Segal spaces, and in Section~\ref{sec:decomp} we introduce the main notion
of this work, decomposition spaces:

\begin{deff}
  A simplicial space $X:\simplexcategory\op\to\Grpd$ is called a {\em decomposition space}
  when it takes active-inert pushouts in $\simplexcategory$ to pullbacks 
  of $\infty$-groupoids.
\end{deff}
We give some equivalent pullback characterisations, and observe that
every Segal space is a decomposition space.

In Section~\ref{sec:cULF} we turn to the relevant notion of 
morphism, that of \culf functor (meaning `conservative' and 
`ULF' = `unique lifting of factorisations'):
\begin{deff}
  A simplicial map is called {\em \culf} if it is cartesian on all active maps.
\end{deff}
After some variations, we come to decalage, which features in the following
important relationship between Segal and decomposition spaces:

\medskip

\noindent
{\bf Theorem~\ref{thm:decomp-dec-segal}.} {\em A simplicial space
$X$ is a decomposition space if and only if both
$\Dectop{X}$ and $\Decbot{X}$ are Segal spaces, and the two 
comparison maps back to $X$ are \culf.}

\medskip

In Section~\ref{sec:COALG} we introduce the incidence coalgebra 
associated to a decomposition space $X$.
It is the slice
$\infty$-category $\Grpd_{/X_1}$, with the comultiplication map given by the span
$$
X_1 \xleftarrow{\;\;\,d_1\;\;\,} X_2 \xrightarrow{(d_2,d_0)} X_1\times X_1.
$$
We explain how a naive view of coassociativity provided the motivation for the
decomposition-space axioms, but to formally establish the coassociativity result
we first require more simplicial preliminaries, introduced in 
Section~\ref{sec:DD}.  In
particular we introduce the twisted arrow category $\DD$ of the category of
finite ordinals, which is monoidal under {\em external sum}.  We show that
simplicial objects in a cartesian monoidal category can be characterised as
monoidal functors on $\DD$, and characterise decomposition spaces as those
simplicial spaces whose extension to $\DD$ preserves certain pullback squares.

In Section~\ref{sec:proofofcoass} the homotopy coassociativity of the incidence
coalgebra is established exploiting the monoidal structure on $\DD$:

\medskip

\noindent
{\bf Theorem~\ref{thm:comultcoass}.}
{\em
  For $X$ a decomposition space, the slice $\infty$-category  $\Grpd_{/X_1}$ has the structure of a strong homotopy  comonoid in the symmetric monoidal $\infty$-category $\LIN$, with the comultiplication defined by the span
  $$
X_1 \xleftarrow{\;\;\,d_1\;\;\,} X_2 \xrightarrow{(d_2,d_0)} X_1\times X_1.
$$
}

\medskip

In Section~\ref{sec:functorialities} we show that \culf functors
induce coalgebra homomorphisms.  
We also comment on a certain contravariant functoriality holding for simplicial maps that are equivalences in degree zero and are relatively Segal.

\medskip

In Section~\ref{sec:monoids-a} we introduce the notion of {\em
monoidal decomposition space}, as a monoid object in the monoidal
$\infty$-category of decomposition spaces and \culf maps.
The incidence
coalgebra of a monoidal decomposition space 
is naturally a {\em bialgebra}.

\medskip

In Section~\ref{sec:ex} we give some basic examples to provide a taste of the
breadth of applications.  Further examples are expounded in detail in
\cite{GKT:restriction} and \cite{GKT:ex}.  We begin with the example of finite
sets and injections (which leads to the binomial coalgebra), to illustrate how
decalage formalises reduction processes, and how the convolution product at the
objective level is the Cauchy product of species.  Coming to examples of
decomposition spaces which are not Segal, we take a short look at Schmitt's Hopf
algebra of graphs, and revisit the running example of the
Butcher--Connes--Kreimer Hopf algebra, comparing with the construction of
D\"ur~\cite{Dur:1986}.  Finally we consider the example of finite vector spaces,
which leads to the general case of Waldhausen's \Sdot-construction and Hall
algebras.  We establish that the $S_\bullet$ construction of an abelian category or a
stable $\infty$-category is a decomposition space.  This result was first proved
by Dyckerhoff and Kapranov and constitutes a cornerstone in their
work~\cite{Dyckerhoff-Kapranov:1212.3563}, \cite{Dyckerhoff-Kapranov:1306.2545},
\cite{Dyckerhoff-Kapranov:1403.5799}, \cite{Dyckerhoff:1505.06940}, to which we
refer for the remarkable richness of this class of examples.

\subsection*{Brief summary of the four sequels to this paper}

The present paper originally formed the
first two sections of a large manuscript~\cite{GKT:1404.3202}
which has been split into altogether six 
papers of more manageable size.  We briefly 
comment on the contents of the sequels.

The long appendix of \cite{GKT:1404.3202} has become an independent
paper \cite{GKT:HLA} developing the necessary background
on homotopy linear algebra.

In paper \cite{GKT:DSIAMI-2}, the second in
the trilogy, we introduce the notion of \emph{complete} decomposition space
in order to provide a notion of nondegeneracy
necessary for the theory of \M inversion.  In this context we can consider the
linear functors $\Phi_n$ defined by spans $X_1\leftarrow \nondeg X_n \to 1$,
where $\nondeg X_n\subset X_n$ is the subspace of nondegenerate $n$-simplices,
and prove the general \M inversion principle on the objective level:
\begin{align*}
\zeta * \Phieven
 &\;\;\simeq\;\; \varepsilon\;\; +\;\; \zeta * \Phiodd.
\end{align*}

Having established this, we analyse the finiteness conditions necessary to take cardinality and obtain numerical incidence algebras, and for the \M inversion principle to descend to these $\Q$-algebras.
We identify two conditions on complete decomposition spaces: having \emph{locally finite length} and being \emph{locally finite}. Complete decomposition spaces that satisfy both finiteness conditions are called {\em \M decomposition spaces}. 
The first finiteness condition is equivalent to the existence of a certain
{\em length filtration}, which is useful in applications.  Although many
examples coming from combinatorics do satisfy this condition, it is actually a 
rather strong condition, as witnessed by the following result:

\medskip
\noindent {\em Every decomposition space with length filtration
is the left Kan extension of a semi-simplicial space.}

\medskip

This result holds for more general simplicial spaces that we 
term {\em stiff}, and we digress to establish this.
We also consider an even weaker notion of {\em split} simplicial space,
in which all face maps preserve nondegenerate simplices.
This condition is the 
analogue of the condition for categories that identities are indecomposable,
enjoyed in particular by \M categories in the sense of Leroux.

\bigskip

In paper~\cite{GKT:MI} we come to what is perhaps the deepest theorem so far in
our work.  Lawvere showed in the 1980s that there is a Hopf algebra of \M
intervals which contains the universal \M function
\cite{LawvereMenniMR2720184}.  This Hopf algebra, obtained from the collection
of all iso-classes of \M intervals, is universal for incidence
coalgebras of \M categories $X$, by virtue of the canonical coalgebra
homomorphism from the incidence coalgebra of $X$ sending an arrow in $X$ to its
factorisation interval.  The universal Hopf algebra is not, however, the
incidence coalgebra of a \M category.

We show that it {\em is} a decomposition space.  
We construct a (large) complete decomposition space $U$ 
of all `subdivided intervals', together with a canonical \culf classifying functor   $X\to U$ for any complete decomposition space $X$.
We prove that the space of \culf maps from $X$ to $U$ is connected, and conjecture that it is contractible.

If we also impose the relevant finiteness conditions, we obtain the result that
the space of all \M intervals is a \M decomposition space.  It follows that it
admits a \M inversion formula with coefficients in finite $\infty$-groupoids or
in $\Q$, and since every \M decomposition space admits a canonical \culf functor
to it, we conclude that \M inversion in every incidence algebra is
induced from this master formula.

\bigskip

In paper~\cite{GKT:restriction} we show that Schmitt coalgebras of
restriction species \cite{Schmitt:hacs} (such as graphs, matroids, posets, 
etc.)\ naturally define
decomposition spaces.  We also introduce a new notion of {\em directed 
restriction species}:
whereas ordinary restriction species are presheaves of the category of
finite sets and injections, directed restriction species are presheaves on the 
category of finite posets and convex maps.  Examples covered by this 
notion are the Butcher--Connes--Kreimer bialgebra and the Manchon--Manin bialgebra
of directed graphs.  Both ordinary and directed restriction species are shown
to be examples of a construction of decomposition spaces from what we call
sesquicartesian fibrations, certain cocartesian fibrations over the category of 
finite ordinals that are also cartesian over convex maps.

\bigskip

In paper \cite{GKT:ex} we give examples from classical (and less classical)
combinatorics.  The first batch of examples, similar 
to the binomial posets of Doubilet--Rota--Stanley~\cite{Doubilet-Rota-Stanley},
are straightforward but serve to illustrate two key points: (1) the incidence algebra
in question is realised directly from a decomposition space, without a
reduction step, and reductions are typically given by \culf functors;
(2) at the objective level, the convolution algebra is a monoidal
structure of species (specifically: the usual Cauchy product of species, the 
shuffle product of $\mathbb L$-species, the Dirichlet product of arithmetic species,
the Joyal--Street external product of $q$-species, and the Morrison `Cauchy' 
product of $q$-species).  In each of these cases, a power series representation
results from taking cardinality.
The next class of examples includes the Fa\`a di Bruno bialgebra, the
Butcher--Connes--Kreimer bialgebra of trees, with several variations, and similar
structures on directed graphs (cf.~Manchon~\cite{Manchon:MR2921530} and
Manin~\cite{Manin:MR2562767}). 
Another important class of examples is given by Hall algebras, cf.~also
\ref{Sdot} below.
We conclude the paper by computing the \M function in a few cases, and
commenting on certain cancellations that occur in the process of taking
cardinality, substantiating that these cancellations are not possible at the
objective level.  This is related to the distinction between bijections and
natural bijections.

\medskip

\subsection*{Acknowledgments}

We are indebted first of all to Andr\'e Joyal, not only for his influence
through the theory of species and the theory of quasi-categories, essential
frameworks for our contribution, but also for his interest in our work, for
many enlightening discussions and advice that helped shape it.  We have
also learned a lot from collaboration and discussions with David Gepner.
Finally we thank Mathieu Anel, Kurusch Ebrahimi-Fard, Tom Leinster, Fei Xu,
Tobias Dyckerhoff, Julie Bergner, Louis Carlier, and Rune Haugseng for valuable
feedback, and the referee for many pertinent remarks that led to improved
exposition.

\addtocontents{toc}{\protect\setcounter{tocdepth}{2}}

\section{Preliminaries on $\infty$-groupoids and $\infty$-categories}

\label{sec:groupoids}

\begin{blanko}{Groupoids and $\infty$-groupoids.}
  Although most of our motivating examples can be naturally formulated in the
  2-category $\oneGrpd$ of $1$-groupoids, we have chosen to work in the
  $\infty$-category $\Grpd$ of $\infty$-groupoids.  This is on one hand the
  natural generality of the theory, and on the other hand a considerable
  conceptual simplification: thanks to the monumental effort of
  Joyal~\cite{Joyal:qCat+Kan}, \cite{Joyal:CRM} and Lurie~\cite{Lurie:HTT}, the
  theory of $\infty$-categories has now reached a stage where it is just as
  workable as the theory of $1$-groupoids --- if not more!  It also contains
  ordinary category theory: one regards a category $\CC$ as an $\infty$-category
  via its nerve $N \CC$.  Some details can be found in \ref{inftycats} below.  The
  philosophy is that, modulo a few homotopy {\em caveats}, one is allowed to
  think as if working in the category of sets.  A recent forceful vindication of
  this philosophy is Homotopy Type Theory~\cite{HoTT}, in which a syntax that
  resembles set theory is shown to be a powerful language for general homotopy
  types.
  
  A recurrent theme in the present work is to upgrade combinatorial
  constructions from sets to $\infty$-groupoids.  To this end the first step
  consists in understanding the construction in abstract terms, often in terms
  of pullbacks and sums, and then the second step consists in copying over the
  construction to the $\infty$-setting.  The $\infty$-category theory needed
  will be accordingly elementary, and it is our contention that it should be
  feasible to read this work without prior experience with $\infty$-groupoids or
  $\infty$-categories, simply by substituting the word `set' for the word
  `$\infty$-groupoid'.  Even at the set level, our theory contributes
  interesting insight, revealing many constructions in the classical theory to
  be governed by very general principles proven useful also in
  other areas of mathematics.
  
  The following short review of some basic aspects of $\infty$-categories should
  suffice for reading this paper and its sequels.
\end{blanko}

\begin{blanko}{From posets to Rezk categories.}
  A few remarks may be in order to relate the homotopy viewpoint with classical
  combinatorics.  A $1$-groupoid is the same
  as an ordinary groupoid, and
  a $0$-groupoid is the same
  as a set.  A $(-1)$-groupoid is the same
  as a truth value: up to equivalence there exist only two
  $(-1)$-groupoids, namely the contractible groupoid (a point) and the empty
  groupoid.  A poset is essentially the same
  as a category in which all
  the mapping spaces are $(-1)$-groupoids.  An ordinary category is a category
  in which all the mapping spaces are $0$-groupoids.  Hence the theory of
  incidence algebras of posets of Rota and collaborators can be seen as the
  $(-1)$-level of the theory.  Cartier--Foata theory and Leroux theory take
  place at the $0$-level.  We shall see that in a sense the natural setting for
  combinatorics is the $1$-level, since this level naturally takes into account
  that combinatorial structures can have symmetries.  (From this viewpoint, it
  looks as if the classical theory compensates for working one level below the
  natural one by introducing reductions.)  
  It is convenient to follow this ladder to infinity: the good
  notion of category with $\infty$-groupoids as mapping spaces is
  that of Rezk complete Segal space, also called Rezk category;
  this is the level of generality of the present work.
\end{blanko}

\begin{blanko}{$\infty$-categories.}\label{inftycats}
  By $\infty$-category we mean quasi-category~\cite{Joyal:qCat+Kan}.  These are
  simplicial sets satisfying the weak Kan condition: inner horns admit a filler.
  (An ordinary category is a simplicial set in which every inner horn admits a {\em 
  unique} filler.)
  This theory has already been developed by Joyal~\cite{Joyal:qCat+Kan,Joyal:CRM} and
  Lurie~\cite{Lurie:HTT}.
  The main point, Joyal's great insight,
  is that category theory can be generalised to
  quasi-categories, and that the results look the same, although to bootstrap
  the theory very different techniques are required.  There are other
  implementations of $\infty$-categories, such as complete Segal spaces; see 
  Bergner~\cite{Bergner:0610239} for a survey.  We
  will only use results that hold in all implementations, and for this reason we
  say $\infty$-category instead of referring explicitly to quasi-categories.
  Put another way, we shall only ever distinguish quasi-categories up to
  (categorical) equivalence, and most of the constructions rely on universal 
  properties such as pullback, which in any case only determine the objects
  up to equivalence.
  Every $1$-category is also a quasi-category via its nerve.  In particular we
  have,
  for each $n\geq 0$, the $\infty$-category $\Delta[n]$ which is the nerve of
  the linearly ordered set $\{0 \leq 1 \leq \cdots \leq n\}$.
\end{blanko}

\begin{blanko}{$\infty$-groupoids.}\label{inftygrpds}
  An $\infty$-groupoid is an $\infty$-category in
  which all morphisms are invertible.  We often say `space' instead of
  $\infty$-groupoid, as they are a combinatorial substitute for topological 
  spaces up to homotopy; for example, to each object $x$ in an $\infty$-groupoid 
  $X$,
  there are associated homotopy groups $\pi_n(X,x)$ for $n>0$.
  In terms of quasi-categories,
  $\infty$-groupoids are precisely Kan complexes, i.e.~simplicial sets in which
  every horn, not just the inner ones, admits a filler.
  
  $\infty$-groupoids play the role analogous to sets in classical category
  theory.  In particular, for any two objects $x,y$ in an $\infty$-category
  $\CC$ there is (instead of a hom set) a \emph{mapping space} $\Map_\CC(x,y)$
  which is an $\infty$-groupoid.  $\infty$-categories form a (large)
  $\infty$-category denoted $\kat{Cat}_\infty$.  $\infty$-groupoids form a
  (large) $\infty$-category denoted $\Grpd$; it can be described explicitly as
  the coherent nerve of the (simplicially enriched) category of Kan complexes.
  Given two $\infty$-categories $\DD$, $\CC$, there is a \emph{functor
  $\infty$-category} $\Fun(\DD,\CC)$.  In terms of quasi-categories, the functor
  $\infty$-category is just the internal hom of simplicial sets.   As an important
  example of a functor $\infty$-category, for a given $\infty$-category $I$ we
  have the $\infty$-category of presheaves $\Fun(I\op,\Grpd)$, and there is a
  Yoneda lemma that works as in the case of ordinary categories~\cite[Lemmas
  5.1.5.2, 5.5.2.1]{Lurie:HTT}. Since $\DD$
  and $\CC$ are objects in the $\infty$-category $\kat{Cat}_\infty$ we also have
  the $\infty$-groupoid $\Map_{\kat{Cat}_\infty}(\DD,\CC)$, which can also be
  described as the maximal sub-$\infty$-groupoid inside $\Fun(\DD,\CC)$.
\end{blanko}

\begin{blanko}{Defining $\infty$-categories and sub-$\infty$-categories.}
  While in ordinary category theory one can define a category by saying what the
  objects and the arrows are (and how they compose), this from-scratch approach
  is more difficult for $\infty$-categories, as one would have to specify the
  simplices in all dimensions and verify the filler condition (that is,
  describe the $\infty$-category as a quasi-category).  In practice,
  $\infty$-categories are constructed from existing ones by general
  constructions that automatically guarantee that the result is again an
  $\infty$-category, although the construction typically uses universal
  properties in such a way that the resulting $\infty$-category is only defined
  up to equivalence.  To specify a sub-$\infty$-category of an $\infty$-category
  $\CC$, it suffices to specify a subcategory of the homotopy category of $\CC$
  (i.e.~the category whose hom sets are $\pi_0$ of the mapping spaces of $\CC$),
  and then pull back along the components functor.  What this amounts to in
  practice is to specify the objects (closed under equivalences) and specifying
  for each pair of objects $x,y$ a full sub-$\infty$-groupoid of the
  mapping space $\Map_\CC(x,y)$, also closed under equivalences, and closed 
  under composition.
\end{blanko}

\begin{blanko}{Diagrams.}\label{blanko:diagrams}
  Since arrows in an $\infty$-category do not compose on the nose (one can talk 
  about `a' composite, not `the' composite), the $1$-categorical notion of
  commutative diagram in the naive sense is not appropriate here.  Commutative
  triangle in an $\infty$-category $\CC$
  means instead `object in the functor $\infty$-category 
  $\Fun(\Delta[2],\CC)$': the $2$-dimensional face of $\Delta[2]$ is mapped to
  a $2$-cell in $\CC$ mediating between the composite of the $01$ and $12$
  edges and the long edge $02$.  Similarly, `commutative square' means
  object in the functor $\infty$-category $\Fun(\Delta[1]\times \Delta[1], 
  \CC)$.  In general, `commutative diagram of shape $I$' means object in
  $\Fun(I,\CC)$.
\end{blanko}

\begin{blanko}{Adjoints, limits and colimits.}
  There are notions of adjoint functors, limits and colimits, which behave
  essentially in the same way as these notions in ordinary category theory, and
  are characterised by universal properties up to equivalence --- although to
  set up the theory and prove the theorems, much more technical proofs are
  required.  Most importantly, the limit over an empty diagram defines
  \emph{terminal object}, which may or may not exist.  It does exist in $\Grpd$
  where it is the singleton set, or any contractible $\infty$-groupoid, in any
  case denoted $\terminal$.
\end{blanko}

\begin{blanko}{Pullbacks and fibres.}
  Central to this work is the notion of pullback: given two morphisms of
  $\infty$-groupoids $X \to B \leftarrow Y$, there is a commutative
  square
  $$\xymatrix{
  X\times_B Y \drpullback \ar[r] \ar[d] & Y \ar[d] \\
  X \ar[r] & B
  }$$
  called the pullback, an example of a limit.
  It is defined via a universal property, as
  a terminal object in a certain auxiliary $\infty$-category consisting
  of commutative squares with sides $X \to B \leftarrow Y$.
  All formal properties of pullbacks of sets carry over to $\infty$-groupoids.
  
  For an object $b$ in an $\infty$-groupoid $B$, we denote by
  $\name b: \terminal \to B$ the morphism that picks out $b$.  Given a
  morphism of $\infty$-groupoids $p:X \to B$ and an object $b\in B$,
  the fibre of $p$ over $b$ is by definition the pullback
  $$\xymatrix{
  X_b \drpullback \ar[r] \ar[d] & X \ar[d]^p \\
  \terminal \ar[r]_{\name b} & B  .
  }$$ 
\end{blanko}

\begin{blanko}{Monomorphisms.}\label{def:mono}
  A map of $\infty$-groupoids $f:X\to Y$ is a {\em monomorphism} when its
  fibres are $(-1)$-groupoids, that is, are either empty or contractible.
  In some respects, this notion behaves like for sets: for example, if $f$
  is a monomorphism, then there is a complement $Z:=Y\shortsetminus X$ such that
  $Y \simeq X + Z$.  Hence a monomorphism is essentially an equivalence
  from $X$ onto some connected components of $Y$.  On the other hand, 
  a crucial difference between sets and $\infty$-groupoids is that diagonal
  maps of $\infty$-groupoids are not in general monomorphisms.  In fact $X \to X \times 
  X$ is a monomorphism if and only if $X$ is discrete (i.e.~equivalent to a set).
\end{blanko}

\begin{blanko}{Working in the $\infty$-category of $\infty$-groupoids, versus 
    working in the model category of simplicial sets.}
  When working with $\infty$-categories in terms of quasi-categories, one often
  works in the Joyal model structure on simplicial sets (whose fibrant objects
  are precisely the quasi-categories).  This is a very powerful technique,
  exploited masterfully by Joyal~\cite{Joyal:CRM} and Lurie~\cite{Lurie:HTT}, and
  essential to bootstrap the whole theory.  In the present work, we can
  benefit from their work, and since our constructions are generally elementary,
  we do not need to invoke model structure arguments, but can get away with
  synthetic arguments.  To illustrate the difference, consider the following 
  version of the Segal 
  condition (see~\ref{segalpqr} for details): we shall formulate it and use it by 
  simply saying {\em the natural square
  $$\xymatrix{
     X_2 \ar[r]\ar[d] & X_1 \ar[d] \\
     X_1 \ar[r] & X_0
  }$$
  is a pullback}.  This is a statement taking place in the $\infty$-category of
  $\infty$-groupoids.  A Joyal--Lurie style formulation would rather take place
  in the category of simplicial sets with the Joyal model structure and say
  something like {\em the natural map $X_2 \to X_1 \times_{X_0} X_1$ is an 
  equivalence}.  Here $X_1\times_{X_0} X_1$ refers to the actual $1$-categorical
  pullback in the category of simplicial sets, which does not coincide with
  $X_2$ on the nose, but is only naturally equivalent to it.
  \end{blanko}

  The following lemma extends a familiar result in $1$-category theory and
  is used many times in our work.
\begin{lemma}\label{pbk}
  If in a prism diagram of $\infty$-groupoids
  $$
  \vcenter{\xymatrix{
   \cdot\dto \rto &  \cdot\dto \rto &  \cdot\dto \\
  \cdot\rto & \cdot \rto & \cdot 
  }}
  $$
  the outer rectangle and the right-hand square are pullbacks,
  then the left-hand square is a pullback.
\end{lemma}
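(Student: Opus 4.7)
My approach is to translate the classical $1$-categorical pasting lemma for pullbacks into the $\infty$-categorical setting by working entirely through universal properties in $\Grpd$, so that no manipulation of quasi-categories at the simplicial-set level is needed. Label the prism as
\[
\xymatrix{
A \ar[d]_f \ar[r]^p & B \ar[d]_g \ar[r]^q & C \ar[d]^h \\
A' \ar[r]_{p'} & B' \ar[r]_{q'} & C'
}
\]
Write $P := A' \times_{B'} B$ for the pullback of the left-hand cospan; the commutativity data of the left square provides a canonical comparison $\alpha : A \to P$. The goal is to show $\alpha$ is an equivalence, which is the assertion that the left square is a pullback.

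The key step is to form the iterated fibre product and invoke the two hypotheses in succession. The assumption that the right square is a pullback furnishes a canonical equivalence $B \simeq B' \times_{C'} C$, and substituting this into $P$ and using associativity of fibre products (a formal consequence of the universal property of limits) yields
\[
P \;=\; A' \times_{B'} B \;\simeq\; A' \times_{B'} \bigl( B' \times_{C'} C \bigr) \;\simeq\; A' \times_{C'} C \;=:\; Q.
\]
The assumption that the outer rectangle is a pullback is precisely the statement that the canonical comparison $\beta : A \to Q$ is an equivalence. One then checks that the composite $A \xrightarrow{\alpha} P \xrightarrow{\sim} Q$ coincides, up to canonical equivalence, with $\beta$: both maps are induced by the same datum $(f : A \to A',\; q \circ p : A \to C)$ together with the same filler over $C'$, so the universal property of $Q$ identifies them. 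It follows that $\alpha$ is an equivalence.

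The step I expect to require most care is the verification that the displayed chain of equivalences $P \simeq Q$ is compatible with the two comparison maps $\alpha$ and $\beta$. In the $\infty$-categorical setting every square carries $2$-cell data, and the agreement must be read as a statement inside the $\infty$-groupoid of commutative squares in $\Grpd$ rather than as an equation of $1$-morphisms. This coherence is exactly what the $\infty$-categorical theory of limits supplies (cf.~\cite[Ch.~4]{Lurie:HTT}), so in practice it may be invoked as a black box, and the remainder of the argument is entirely parallel to the classical $1$-categorical pasting lemma.
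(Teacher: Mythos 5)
Your proof is correct and takes essentially the same route the paper indicates: the paper gives no detailed argument, only the remark that one runs the classical pasting proof while replacing ``unique arrow'' assertions by equivalences of mapping spaces or slice $\infty$-categories, citing Lurie \cite{Lurie:HTT}, Lemma~4.4.2.1 (for pushouts). Your explicit version via the comparison maps $\alpha,\beta$ and associativity of iterated fibre products is a standard way of fleshing that out, and the coherence of the identification $P\simeq Q$ with the two comparison maps, which you rightly flag as the delicate point, is exactly what the paper delegates to the general theory of $\infty$-categorical limits.
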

A few remarks are in order.  Note that we talk about a prism, i.e.~a 
$\Delta[1]\times\Delta[2]$-diagram:  although we have only drawn two of the 
squares of the prism, there is a third, whose horizontal sides are composites
of the two indicated arrows.  The triangles of the prism are not drawn either,
because they are the fillers that exist by the axioms of quasi-categories.
The proof follows the proof in the classical case, except that instead of saying
`given two arrows such and such, there exists a unique arrow making the diagram
commute, etc.', one has to argue with equivalences of mapping spaces (or slice
$\infty$-categories).  See for example \cite[Lemma~4.4.2.1]{Lurie:HTT}.
for the dual case of pushouts.

\begin{blanko}{Homotopy sums.}
  In ordinary category theory, a colimit indexed by a discrete category (that
  is, a set) is the same thing as a sum (coproduct).  For $\infty$-categories,
  the role of sets is played by $\infty$-groupoids.  A colimit indexed by an
  $\infty$-groupoid is called a {\em homotopy sum}.  In the case of
  $1$-groupoids, these sums are ordinary sums weighted by inverses of symmetry
  factors.  Their importance was stressed in
  \cite{GalvezCarrillo-Kock-Tonks:1207.6404}: by dealing with homotopy sums
  instead of ordinary sums, the formulae start to look very much like in the case
  of sets.  For example, given a map of $\infty$-groupoids $X \to B$, we have
  that $X$ is the homotopy sum of its fibres.
\end{blanko}

\begin{blanko}{Slice categories.}
  Maps of $\infty$-groupoids with codomain $B$ form the objects of
  a slice $\infty$-category
  $\Grpd_{/B}$, which behaves very much like a slice category in ordinary
  category theory.  For example, for the terminal object $\terminal$ we have
  $\Grpd_{/\terminal} \simeq \Grpd$. 
  Again a word of warning is due: when we refer to the 
  $\infty$-category $\Grpd_{/B}$ we only refer to an object determined up to 
  equivalence of $\infty$-categories by a certain universal property (Joyal's 
  insight of defining slice categories as adjoint to a join 
  operation~\cite{Joyal:qCat+Kan}).
  In the Joyal model structure for quasi-categories, this category is 
  represented by an explicit simplicial set.  However, there is more than
  one possibility, depending on which explicit version of the join operator
  is employed (and of course these are canonically equivalent). In the works
  of Joyal and Lurie, these different versions are distinguished, and each has
  some technical advantages.  In the present work we shall only need properties
  that hold for both, and we shall not distinguish them.
\end{blanko}

\begin{blanko}{Families.}
  A map of $\infty$-groupoids $X \to B$ can be interpreted as a family of 
  $\infty$-groupoids parametrised by $B$, namely the fibres $X_b$.  Just as 
  for sets, the same family
  can also be interpreted as a presheaf $B \to \Grpd$.  Precisely,
  for each $\infty$-groupoid $S$, we have the fundamental equivalence
  $$
  \Grpd_{/B} \isopil \Fun(B,\Grpd) ,
  $$
  which takes a family $X \to B$ to the functor sending $b \mapsto X_b$. 
  In the other direction, given a functor $F:B\to\Grpd$,
  its colimit is the total space of a family $X \to B$.
\end{blanko}

\begin{blanko}{Beck--Chevalley equivalence.}
  Pullback along a map of $\infty$-groupoids $f: J \to I$
  defines an $\infty$-functor $f\upperstar :\Grpd_{/I} \to \Grpd_{/J}$.  This functor
  is right adjoint to the functor $f\lowershriek:\Grpd_{/J} \to \Grpd_{/I}$ given by
  post-composing with $f$.  (The latter construction requires some care: as
  composition is not canonically defined, one has to choose composites.
  One can check that different choices yield equivalent
  functors.)  The following Beck--Chevalley rule (push-pull formula)
  \cite{Gepner-Haugseng-Kock}
  holds for $\infty$-groupoids: given a pullback square
  $$\xymatrix{
  \cdot \drpullback \ar[r]^f \ar[d]_p & \cdot \ar[d]^q \\
  \cdot \ar[r]_g & \cdot}$$
  there is a canonical equivalence of functors 
  \begin{equation}\label{BC}
  p\lowershriek \circ f\upperstar \simeq g\upperstar \circ q\lowershriek .
 \end{equation}
\end{blanko}

\begin{blanko}{Symmetric monoidal $\infty$-categories.}
  There is a notion of symmetric monoidal $\infty$-category, but it is
  technically more involved than the $1$-category case, since in general higher
  coherence data has to be specified beyond the $1$-categorical associator and
  MacLane pentagon condition.  This theory has been developed in detail by
  Lurie \cite[Ch.2]{Lurie:HA}, subsumed in the
  general theory of $\infty$-operads.  In the present work, a few monoidal
  structures play an important role, but since they are directly induced by
  cartesian product, we have preferred to deal with
  them in an informal (and possibly not completely rigorous) way, 
  with the same freedom as one deals with cartesian products in
  ordinary category theory.  The following case is the most important
  for our theory.  It is defined rigorously in \cite{GKT:HLA}, as a
  straightforward consequence of results of Lurie.
\end{blanko}

\begin{blanko}{The symmetric monoidal $\infty$-category $\LIN$.}
  \label{bla:LIN}
  The $\infty$-categories of the form $\Grpd_{/I}$ form the objects
  of a symmetric monoidal $\infty$-category $\LIN$, described in
  detail in \cite{GKT:HLA}: the morphisms are the linear functors,
  meaning that they preserve homotopy sums, or equivalently indeed
  all colimits.  Such functors are given by spans: the span
  $$
  I \stackrel p \leftarrow M \stackrel q \to J$$
  defines the linear functor
  $$
  q\lowershriek \circ p\upperstar : \Grpd_{/I}  \longrightarrow  \Grpd_{/J} .
  $$
  The $\infty$-category $\LIN$ plays the role of the category of
  vector spaces (although to be strict about that interpretation,
  and in particular to entertain a notion of cardinality to 
  embody the analogy,
  certain finiteness conditions should be imposed --- these play no
  essential role in the present paper).
  
  The symmetric monoidal structure on $\LIN$ is easy to describe on objects:
  $$
  \Grpd_{/I} \tensor \Grpd_{/J} = \Grpd_{/I\times J}
  $$
  just as the tensor product of vector spaces with bases indexed by sets $I$ 
  and $J$ is the vector space with basis indexed by $I\times J$.
  The neutral object is $\Grpd$.
\end{blanko}

\section{Simplicial preliminaries and Segal spaces}

\label{sec:simplprelim}

Our work relies heavily on simplicial machinery.  We briefly review the
notions needed, to establish conventions and notation. 

\begin{blanko}{The simplex category (the topologist's Delta).}
  Recall that the `simplex category' $\simplexcategory$ is the category whose objects are the nonempty
  finite ordinals
  $$
  [k] := \{ 0,1,2,\ldots, k\} ,
  $$
  and whose morphisms are the monotone maps.  These are generated by
  the coface maps $d^i : [n-1]\to [n]$, which are the monotone
  injective functions for which $i \in [n]$ is not in
  the image, and codegeneracy maps $s^i:[n+1] \to [n]$, which are
  monotone surjective functions for which $i \in [n]$ has a double preimage.
  We write $d^\bot:=d^0$ and
  $d^\top:=d^n$ for the outer coface maps.
\end{blanko}

\begin{blanko}{Active and inert maps (generic and free maps).}\label{generic-and-free}
  The category $\simplexcategory$ has an active-inert factorisation system.
  An arrow of $\simplexcategory$ is termed \emph{active} (also called {\em
  generic}), and written $g:[m]\genmap [n]$, if it preserves end-points,
  $g(0)=0$ and $g(m)=n$.  An arrow is termed \emph{inert} (also called {\em
  free}), and written $f: [m]\rat [n]$, if it is distance preserving,
  $f(i+1)=f(i)+1$ for $0\leq i\leq m-1$.  The active maps are generated by
  the codegeneracy maps and the inner coface maps, and the inert maps are
  generated by the outer coface maps.  Every arrow in $\simplexcategory$
  factors uniquely as an active map followed by an inert map, as detailed
  below.
\end{blanko}

\begin{blanko}{Background remarks.}
  The notions of generic and free maps are general notions in category theory,
  introduced by
  Weber~\cite{Weber:TAC13,Weber:TAC18}, who extracted the notions from earlier
  work of Joyal~\cite{Joyal:foncteurs-analytiques}.  A recommended entry point
  to the theory is
  Berger--Melli\`es--Weber~\cite{Berger-Mellies-Weber:1101.3064}.  The notions
  make sense for example whenever there is a cartesian monad on a presheaf
  category $\CC$: in the Kleisli category, the free maps are those from $\CC$,
  and the generic maps are those generated by the monad.  In practice, this is
  restricted to a suitable subcategory of combinatorial nature.  In the case at
  hand the monad is the free-category monad on the category of directed graphs,
  and $\simplexcategory$ arises as the restriction of the Kleisli category to
  the subcategory of non-empty linear graphs.  Other important instances of
  generic-free factorisation systems are found in the category of rooted trees
  \cite{Kock:0807} (where the monad is the free-operad monad), the category of
  Feynman graphs~\cite{Joyal-Kock:0908.2675} (where the monad is the
  free-modular-operad monad), the category of directed
  graphs~\cite{Kock:1407.3744} (where the monad is the free-properad monad), and
  Joyal's cellular category $\Theta$ \cite{Berger:Adv} (where the monad is the
  free-omega-category monad).  The more recent terminology
  `active/inert' is due to Lurie~\cite{Lurie:HA}, and is more suggestive for the
  role the two classes of maps play.
\end{blanko}

\begin{blanko}{Amalgamated ordinal sum.}\label{bla:pm}
   The {\em amalgamated ordinal sum over $[0]$} of two
   objects $[m]$ and $[n]$, denoted $[m]\intconcat [n]$,
   is given by the pushout of inert maps
   \begin{equation}\label{pm}\vcenter{\xymatrix@C+1.2pt{
   [0] \ar@{ >->}[r]^{(d^\top)^n} \ar@{ >->}[d]_{(d^\bot)^m} & [n]
   \ar@{ >->}[d]^{(d^\bot)^m}\\
   [m] \ar@{ >->}[r]_(0.4){(d^\top)^n} & [m]\intconcat[n] \makebox[0em][l]{${}=[m+n]$}\ulpullback
   }}\end{equation}
   This operation is not functorial on all maps in $\simplexcategory$, 
   but on the subcategory $\Deltaact$ of active maps it is functorial
   and defines a monoidal structure on $\Deltaact$
   (dual to ordinal sum (cf.~Lemma~\ref{lem:Delta-duality})).

   The inert maps $f:[n]\rat [m]$ are precisely the maps
   that can be written
    $$
    f:[n]\rat [a]\intconcat[n]\intconcat[b].
    $$
    Every active map with source $[a]\intconcat[n]\intconcat[b]$ splits as
    $$
    (\xymatrix@!C{{}[a]\ar@{ ->|}[r]^{g_1}&[a']})
    \;\intconcat\;
    (\xymatrix@!C{{}[n]\ar@{ ->|}[r]^{g}&[k]})
    \;\intconcat\;
    (\xymatrix@!C{{}[b]\ar@{ ->|}[r]^{g_2}&[b']})
    $$

    With these observations we can be explicit about the
    active-inert factorisation:
\end{blanko}

\begin{lemma}\label{lem:genfactsplit}
  With notation as above, the active-inert factorisation of the 
  composite of an inert map $f$
  followed by an active map $g_1\intconcat g\intconcat g_2$ is given by
    \begin{equation}\label{gffg}\vcenter{\xymatrix{
     [n] \ar@{ >->}[r]^-f\ar@{->|}[d]_g & [a]\intconcat[n]\intconcat[b] \ar@{->|}[d]^{g_1\intconcat 
     g\intconcat g_2} \\
     [k]\ar@{ >->}[r] & [a']\intconcat[k]\intconcat[b']
  }}\end{equation}
\end{lemma}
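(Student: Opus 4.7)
The plan is straightforward: verify that the proposed square \eqref{gffg} commutes, check that its decomposition is indeed of active-followed-by-inert type, and then invoke the uniqueness of the active-inert factorisation mentioned in \S\ref{generic-and-free}.

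To see commutativity, I would unpack how each map acts element-wise. The inert map $f$ picks out $[n]$ as the middle block of the amalgamated sum $[a]\intconcat[n]\intconcat[b]$, sending $i \mapsto a+i$. The active map $g_1\intconcat g\intconcat g_2$, obtained from the universal property of the pushout defining $[a']\intconcat[k]\intconcat[b']$ (or more precisely from functoriality of $\intconcat$ on $\Deltaact$), acts block-wise; in particular it restricts to $g$ on the middle block, landing in the middle block $[k]$ of the target. Hence both ways around the square send $i \in [n]$ to $a' + g(i)$, so the square commutes.

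For the type of the factorisation, the left-hand vertical arrow $g:[n]\genmap[k]$ is active by hypothesis, being one of the pieces of the split of $g_1\intconcat g\intconcat g_2$. The bottom arrow $[k]\rightarrowtail[a']\intconcat[k]\intconcat[b']$ is the inclusion of $[k]$ as the middle block and is therefore inert, being distance-preserving. By uniqueness of the active-inert factorisation, the displayed decomposition is the active-inert factorisation of the composite $(g_1\intconcat g\intconcat g_2)\circ f$.

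The only real obstacle is notational book-keeping --- making the block structure of $[a]\intconcat[n]\intconcat[b]$ and the block-wise action of $g_1\intconcat g\intconcat g_2$ fully transparent --- rather than any conceptual subtlety; the argument is essentially just an unwinding of the characterisations of active and inert maps given in \ref{sect:pm}.
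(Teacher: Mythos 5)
Your proposal is correct, and it takes exactly the route the paper intends: the paper states Lemma~\ref{lem:genfactsplit} without a written proof, presenting it as an immediate consequence of the observations in \ref{sect:pm} (the description of inert maps as middle-block inclusions and the block-wise splitting of active maps), which is precisely what your element-wise verification of commutativity plus the uniqueness of the active-inert factorisation unwinds. Both paths around the square send $i\in[n]$ to $a'+g(i)$, the left edge is active and the bottom edge is inert, so there is nothing more to say.
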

\begin{blanko}{Identity-extension squares.}
   A square \eqref{gffg} in which $g_1$ and $g_2$ are identity maps is called an  {\em identity-extension square}.
\end{blanko}
\begin{lemma}\label{genfreepushout}
  Active and inert maps in $\simplexcategory$ admit pushouts along each other, and the 
  resulting maps are again active and inert. In fact, active-inert pushouts are precisely the identity extension squares.
\begin{equation*}
\vcenter{\xymatrix{
     [n] \ar@{ >->}[r]\ar@{->|}[d]_{g} & [a]\intconcat[n]\intconcat[b] \ar@{->|}[d]^{\id\intconcat g\intconcat\id} \\
     [k]\ar@{ >->}[r] & [a]\intconcat[k]\intconcat[b]
}}\end{equation*}
\end{lemma}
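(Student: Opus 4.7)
The plan is to verify the universal property directly, exploiting the explicit description of inert and active maps recorded in \ref{sect:pm}, and then match the resulting squares against the identity-extension squares of Lemma~\ref{lem:genfactsplit}.

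First, I would fix notation. Given an inert map out of $[n]$, Paragraph \ref{sect:pm} tells us it is of the form $f : [n] \rat [a]\intconcat[n]\intconcat[b]$ for unique $a,b \geq 0$. Given an active $g : [n] \genmap [k]$, form the candidate pushout square by extending $g$ by identities on both sides: the right-hand vertical $\id_{[a]}\intconcat g\intconcat\id_{[b]} : [a]\intconcat[n]\intconcat[b] \genmap [a]\intconcat[k]\intconcat[b]$ is active because it preserves endpoints, and the bottom horizontal $[k] \rat [a]\intconcat[k]\intconcat[b]$ is inert because it is distance preserving. Commutativity of the square is immediate from the definition of $\intconcat$.

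Next, I would prove the pushout property in $\simplexcategory$. Suppose $h_1 : [a]\intconcat[n]\intconcat[b] \to [m]$ and $h_2 : [k] \to [m]$ satisfy $h_1 \circ f = h_2 \circ g$. The underlying set of $[a]\intconcat[k]\intconcat[b]$ is covered by the images of the two maps into it, meeting in the two boundary points; I would define $h : [a]\intconcat[k]\intconcat[b] \to [m]$ on these three blocks by using $h_1$ on the outer $[a]$ and $[b]$ pieces and $h_2$ on the middle $[k]$ piece. The key compatibility check is at the glue points, where $h_1$ of the endpoints of the middle $[n]$ must agree with $h_2$ of the endpoints of $[k]$: this follows from the equation $h_1\circ f = h_2 \circ g$ together with the fact that $g$ is active, i.e.\ preserves $0$ and $n$. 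Monotonicity of $h$ is clear from monotonicity of $h_1$ and $h_2$. Uniqueness is automatic because every element of $[a]\intconcat[k]\intconcat[b]$ is hit by one of the two maps and the values are forced by $h_1$ and $h_2$.

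Finally, for the converse ``precisely'' clause, I would invoke Lemma~\ref{lem:genfactsplit}: any active-inert commutative square $([n] \genmap [k],\ [n] \rat [a]\intconcat[n]\intconcat[b])$ extends uniquely to an identity-extension square, because the target of an inert map out of $[k]$ must again split as $[a']\intconcat[k]\intconcat[b']$, and the active map between the upper-right and lower-right corners splits as $g_1\intconcat g \intconcat g_2$; the pushout property just established forces $g_1$ and $g_2$ to be identities and $[a']=[a]$, $[b']=[b]$. Thus active-inert pushout squares coincide on the nose with identity-extension squares.

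The main obstacle I anticipate is purely bookkeeping: keeping the three pieces of the amalgamated sum straight when writing the case distinction for $h$, and checking that the boundary compatibility uses exactly the active-map property of $g$ (endpoint preservation) rather than more. Once that is laid out carefully, the rest is formal.
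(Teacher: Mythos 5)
The paper states this lemma without proof, so there is no argument of the authors' to compare against; your direct verification of the universal property is correct and would serve as a proof. Two small remarks. First, the agreement at the two glue points $a$ and $a+k$ is what makes $h$ well defined as a function on the union of the three blocks, but to verify the triangle identity $h\circ(\id\intconcat g\intconcat\id)=h_1$ over the middle block you need the hypothesis $h_1\circ f=h_2\circ g$ at \emph{every} point of $[n]$, not only at the endpoints: for $0<j<n$ the point $a+j$ is sent to $a+g(j)$ in the middle block, where $h$ was defined via $h_2$, so one needs $h_2(g(j))=h_1(a+j)=h_1(f(j))$. The same equation supplies this, so nothing is missing, but the phrase ``the key compatibility check is at the glue points'' slightly mislocates where the hypothesis gets used. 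Second, for the converse clause you can shortcut the bookkeeping: $\simplexcategory$ is rigid (a finite linear order has no nontrivial monotone automorphisms), so pushouts are unique on the nose; once the identity-extension square is known to be a pushout of the given span, and once one knows from \ref{sect:pm} and Lemma~\ref{lem:genfactsplit} that every active--inert commutative square has the shape \eqref{gffg}, comparison with the pushout forces $g_1=\id$, $g_2=\id$ immediately, which is exactly your conclusion reached with slightly less machinery.
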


These pushouts are fundamental to this work.  We will define
decomposition spaces to be simplicial spaces $X:\simplexcategory\op\to\Grpd$
that send these pushouts to pullbacks.

The previous lemma has the following easy corollary.
\begin{cor}\label{d1s0Delta}
  Every codegeneracy map is a pushout (along an inert map) of $s^0:[1]\to[0]$,
  and every active coface maps is a pushout (along an inert map) of 
  $d^1:[1]\to[2]$.
\end{cor}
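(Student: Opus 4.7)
The plan is to apply Lemma~\ref{genfreepushout}, which identifies active-inert pushouts with identity-extension squares, so it suffices to exhibit each codegeneracy (resp.\ each active coface) as an identity extension built on $s^0:[1]\to[0]$ (resp.\ $d^1:[1]\to[2]$).

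For a codegeneracy $s^i:[n+1]\to[n]$, the idea is to separate the source and target around the pair $\{i,i+1\}$ that gets contracted. Using amalgamated ordinal sum, I would write
$$
[n+1] \;=\; [i]\intconcat[1]\intconcat[n-i], \qquad [n] \;=\; [i]\intconcat[0]\intconcat[n-i],
$$
and observe that under this decomposition $s^i$ is literally the active map $\id_{[i]}\intconcat s^0\intconcat\id_{[n-i]}$, as follows from the splitting of active maps out of an amalgamated sum described in \ref{sect:pm}. This is an identity-extension square for $s^0:[1]\to[0]$, and Lemma~\ref{genfreepushout} immediately gives the desired pushout along the inert inclusion $[1]\rat [i]\intconcat[1]\intconcat[n-i]$.

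By \ref{generic-and-free}, the active cofaces are precisely the inner ones, $d^i:[n-1]\to[n]$ with $0<i<n$, and the same strategy applies around the inserted element: decompose
$$
[n-1] \;=\; [i-1]\intconcat[1]\intconcat[n-1-i], \qquad [n] \;=\; [i-1]\intconcat[2]\intconcat[n-1-i],
$$
whereupon $d^i = \id_{[i-1]}\intconcat d^1\intconcat\id_{[n-1-i]}$. This again is an identity-extension square, and Lemma~\ref{genfreepushout} exhibits $d^i$ as a pushout of $d^1:[1]\to[2]$ along an inert map. The only real work is the index bookkeeping to verify that the cardinalities on the two sides of each amalgamated sum match; there is no substantive obstacle.
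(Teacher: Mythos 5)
Your proposal is correct and takes exactly the route the paper intends: the paper states this as an ``easy corollary'' of Lemma~\ref{genfreepushout} without writing out the details, and your explicit identity-extension decompositions $s^i=\id_{[i]}\intconcat s^0\intconcat\id_{[n-i]}$ and $d^i=\id_{[i-1]}\intconcat d^1\intconcat\id_{[n-1-i]}$ (with the correct observation that the active cofaces are precisely the inner ones) are precisely the verification being left to the reader.
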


\begin{blanko}{Simplicial spaces and Segal spaces.}\label{simpl-Grpd}
  Our main object of study will be simplicial $\infty$-groupoids subject to
  various exactness conditions, all formulated in terms of pullbacks.
  More precisely we work in the functor $\infty$-category
  $$
  \Fun(\simplexcategory\op,\Grpd) ,
  $$
  whose objects are functors  $X:\simplexcategory\op\to\Grpd$, from the $\infty$-category $\simplexcategory\op$ to the
  $\infty$-category $\Grpd$.  We prefer to call these objects {\em simplicial spaces} rather than simplicial $\infty$-groupoids.
 As explained in \ref{blanko:diagrams} the simplicial
  identities for $X$
  are not strictly commutative squares but $\Delta[1]\times \Delta[1]$-diagrams in $\Grpd$, hence come
  equipped with a homotopy between the two ways around in the square.  But this
  is precisely the setting for pullbacks.

Consider a simplicial space $X: \simplexcategory\op \to \Grpd$.
We recall the {\em Segal maps}
$$
(\partial_{0,1},\dots,\partial_{r-1,r}):X_r \longrightarrow X_1 \times_{X_0} 
\cdots \times_{X_0} X_1, \qquad r\geq 0,
$$
where $\partial_{k-1,k}:X_r\to X_1$ is induced by the map 
$[1]\rat[r]$ 
sending 0,1  to $k-1,k$.

A {\em Segal space} is a simplicial space satisfying the Segal 
condition,
namely that the Segal maps are equivalences.
 (This is automatic for $r=0,1$ as the Segal map is just the 
 identity map $X_r\to X_r$, by convention).
\end{blanko}

\begin{lemma}\label{segalpqr}
  The following conditions are equivalent, for any simplicial space $X$:
\begin{enumerate}
\item $X$ satisfies the Segal condition,
$$X_r \stackrel\simeq\longrightarrow X_1 \times_{X_0} \cdots \times_{X_0} X_1 \qquad \text{for all }r\geq 0.$$
\item The following square is a pullback for all $p,q\geq r\geq 0$
$$
\vcenter{\xymatrix{
   X_{p-r+q}\dto_{{{d_{p+1}}^{q-r}}}
\drpullback 
\rto^-{{d_{0}}^{p-r}}
 &  X_q\dto^{{{d_{r+1}}^{q-r}}} \\
   X_{p}\rto_-{{d_{0}}^{p-r}}  &  X_{r}\,.
  }}
$$
\item The following square is a pullback for all $n>0$
$$
\vcenter{\xymatrix{
   X_{n+1}\dto_{d_\top}\drpullback 
\rto^-{d_\bot}
 &  X_n\dto^{d_\top} \\
   X_{n}\rto_-{d_\bot}  &  X_{n-1}\,.
  }}
$$
\item The following square is a pullback for all $p,q\geq 0$
$$
\vcenter{\xymatrix{
   X_{p+q}\dto_{{{d_{p+1}}^{q}}}
\drpullback 
\rto^-{{d_{0}}^{p}}
 &  X_q\dto^{{{d_{1}}^{q}}} \\
   X_{p}\rto_-{{d_{0}}^{p}}  &  X_{0}\,.
  }}
$$
\end{enumerate}
\end{lemma}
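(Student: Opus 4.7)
The strategy is to set up the cycle $(1) \Rightarrow (2) \Rightarrow (4) \Rightarrow (1)$, obtain $(2) \Rightarrow (3)$ as a trivial specialisation, and close the circle with an independent argument $(3) \Rightarrow (1)$. The heart of the proof is (i) unfolding the iterated Segal decomposition in $(1) \Rightarrow (2)$, and (ii) two short inductions on $r$ that recover the full Segal condition from the more restricted pullback squares of (3) and (4).

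For $(1) \Rightarrow (2)$, use the Segal condition to identify each of $X_{p-r+q}$, $X_p$, $X_q$, and $X_r$ with an iterated fibered product of copies of $X_1$ over $X_0$. Under these identifications, the composite $d_0^{p-r}$ becomes projection onto the last consecutive factors, while $d_{p+1}^{q-r}$ becomes projection onto the first consecutive factors (they successively delete vertices from the bottom, respectively from the top). The claim that the square in (2) is a pullback then becomes the elementary identity
$$X_1^{\times_{X_0}(p-r+q)} \;\simeq\; X_1^{\times_{X_0} p} \times_{X_1^{\times_{X_0} r}} X_1^{\times_{X_0} q},$$
where the overlap consists of $r$ middle factors. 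The specialisation $(2) \Rightarrow (3)$ is obtained by taking $p=q=n$ and $r=n-1$, so that $d_{p+1}^{q-r} = d_{n+1} = d_\top$ and $d_0^{p-r} = d_0 = d_\bot$; and $(2) \Rightarrow (4)$ is the case $r = 0$.

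For $(4) \Rightarrow (1)$, induct on $r$. The cases $r \leq 1$ hold by convention, and $r = 2$ is (4) with $p=q=1$. For $r \geq 3$, setting $p=1$ and $q=r-1$ in (4) gives $X_r \simeq X_1 \times_{X_0} X_{r-1}$ via the maps $d_2^{r-1}$ (selecting the $01$-edge) and $d_0$ (selecting the face opposite vertex $0$); plugging in the inductive Segal decomposition of $X_{r-1}$ yields the Segal decomposition of $X_r$, and one checks that the resulting equivalence agrees with the Segal map. For the alternative $(3) \Rightarrow (1)$, induct on $r$ as well: $r=2$ is (3) with $n=1$, and for $r \geq 3$, (3) with $n=r-1$ gives $X_r \simeq X_{r-1} \times_{X_{r-2}} X_{r-1}$ via $d_\bot$ and $d_\top$. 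Under the inductive Segal equivalences, $d_\top$ projects onto the first $r-2$ factors of $X_1^{\times_{X_0}(r-1)}$ and $d_\bot$ onto the last $r-2$ factors, so the pullback assembles an $r$-fold fibered product of $X_1$'s over $X_0$, as required.

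The main obstacle is the routine but fiddly bookkeeping needed in (i) to verify that, under the Segal equivalences, the iterated face maps $d_0^{p-r}$ and $d_{p+1}^{q-r}$ really correspond to projections onto consecutive blocks of factors, and in the inductive steps (iii), (iv) to identify the induced equivalence with the Segal map rather than with some other equivalence. Both verifications reduce to repeated use of the simplicial identities and the prism lemma (Lemma \ref{pbk}) to split or reassemble the pullback squares involved.
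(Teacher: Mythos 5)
Your proof is correct and follows essentially the same route as the paper: (1) $\Rightarrow$ (2) by unwinding the iterated fibre products, (3) and (4) as specialisations of (2), and (4) $\Rightarrow$ (1) by induction. The only divergence is in how (3) is reconnected: the paper proves (3) $\Rightarrow$ (2) directly, observing that the square in (2) is a pasting of a $(p-r)\times(q-r)$ grid of the unit squares appearing in (3) (repeated use of Lemma~\ref{pbk}), whereas you run a second induction (3) $\Rightarrow$ (1); both close the cycle, the paper's pasting argument being marginally shorter since it avoids a second identification of the induced equivalence with the Segal map.
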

\begin{proof}
It is straightforward to show that the Segal condition implies (2). 
Now (3) and (4) are special cases of (2). Also (3) implies (2):
the pullback in (2) is a composite of pullbacks of the type given in (3). 
Finally one shows inductively that (4) implies the Segal condition (1).
\end{proof}  

A simplicial map $F:Y\to X$ is {\em cartesian} on
an arrow $[n]\to[k]$ in $\simplexcategory$ if the naturality square for $F$ with respect to this arrow is a pullback.
\begin{lemma}\label{cart/Segal=Segal}
    If a simplicial map $F: Y \to X$ is 
    cartesian on outer coface maps, and if $X$ 
    is a Segal space, then $Y$ is a Segal space too.
\end{lemma}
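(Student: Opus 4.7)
The plan is to use the characterisation of Segal spaces from Lemma~\ref{segalpqr}(3): it suffices to show that for every $n > 0$ the square
$$\xymatrix{Y_{n+1} \ar[r]^{d_\bot} \ar[d]_{d_\top} & Y_n \ar[d]^{d_\top} \\ Y_n \ar[r]_{d_\bot} & Y_{n-1}}$$
is a pullback of $\infty$-groupoids. I would compare this with the analogous square for $X$, which is already a pullback since $X$ is Segal, by pasting naturality squares of $F$ and invoking the prism lemma (Lemma~\ref{pbk}) twice. The observation that drives the argument is that all four edges of the square above are outer face maps, so every one of $F$'s naturality squares over these edges is a pullback by hypothesis.

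Concretely, assemble the prism
$$\xymatrix{Y_{n+1} \ar[r]^{d_\bot} \ar[d]_{d_\top} & Y_n \ar[r]^{F} \ar[d]^{d_\top} & X_n \ar[d]^{d_\top} \\ Y_n \ar[r]_{d_\bot} & Y_{n-1} \ar[r]_{F} & X_{n-1}}$$
whose right-hand square is the naturality square for $F$ on the outer face map $d_\top\colon Y_n\to Y_{n-1}$, hence a pullback by hypothesis. To see that the outer rectangle is a pullback, refactor it as the second prism
$$\xymatrix{Y_{n+1} \ar[r]^{F} \ar[d]_{d_\top} & X_{n+1} \ar[r]^{d_\bot} \ar[d]^{d_\top} & X_n \ar[d]^{d_\top} \\ Y_n \ar[r]_{F} & X_n \ar[r]_{d_\bot} & X_{n-1}}$$
whose left square is the naturality of $F$ on the outer face map $d_\top\colon Y_{n+1}\to Y_n$ (a pullback by hypothesis), and whose right square is the Segal pullback for $X$ in the form of Lemma~\ref{segalpqr}(3). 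Both are pullbacks, so their pasting is a pullback. Applying Lemma~\ref{pbk} to the first prism then yields that its left-hand square is a pullback, which is exactly the Segal pullback needed for $Y$.

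The main obstacle is the homotopy-coherent bookkeeping: in the $\infty$-categorical setting the simplicial identities for $X$ and $Y$ and the naturality of $F$ hold only up to canonical homotopy, so some care is needed to ensure that the squares being pasted assemble into genuine prism diagrams in $\Grpd$ rather than mere commutative-up-to-homotopy cells lacking the required higher data. However, all the squares in sight arise from the single morphism $F\colon Y\to X$ of simplicial $\infty$-groupoids evaluated on a fixed finite subdiagram of $\simplexcategory\op$, so they are part of a single coherent diagram in $\Grpd$ indexed by that finite shape, and Lemma~\ref{pbk} applies directly.
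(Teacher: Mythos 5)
The paper states Lemma~\ref{cart/Segal=Segal} without proof, so there is nothing to compare against literally; judged on its own, your argument is correct and is exactly the two-prism pasting argument the authors use for the closely analogous statements (compare the proofs of Proposition~\ref{prop:segal=>decomp1} and Lemma~\ref{bonus-pullbacks}). You verify the Segal condition in the form of Lemma~\ref{segalpqr}(3), whose edges are all outer face maps, factor the common outer rectangle in two ways, and apply Lemma~\ref{pbk}; your remark on coherence is also right, since both prisms are extracted from the single cube obtained by evaluating the simplicial map $F$ on the commuting square $d^\bot d^\top = d^\top d^\bot$ in $\simplexcategory$. (A minor observation: your argument only ever uses cartesianness on the $d_\top$ maps, so the hypothesis could be weakened to ``cartesian on $d_\top$'' --- or symmetrically on $d_\bot$ --- but the stated hypothesis certainly suffices.)
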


\begin{blanko}{Rezk completeness.}
  Let $J$ denote the (ordinary) nerve of the 
  groupoid generated by one isomorphism $0 \to 1$.  
  A Segal space $X$ is {\em Rezk complete} when 
  the natural map
  $$
  \Map(\terminal, X) \to \Map(J,X)
  $$
  (obtained by precomposing with $J \to \terminal$)
  is an equivalence of $\infty$-groupoids.
  It means that the space of identity arrows is
  equivalent to the space of equivalences.  
  (See \cite[Thm.6.2]{Rezk:MR1804411}, \cite{Bergner:0610239} 
  and \cite{Joyal-Tierney:0607820}.) 
  A Rezk complete Segal space is also called a {\em Rezk category}.
\end{blanko}

\begin{blanko}{Ordinary nerve.}
    Let $\CC$ be a small $1$-category.  The {\em nerve} of $\CC$ is the simplicial set
  \begin{eqnarray*}
 N\CC:   \simplexcategory\op & \longrightarrow & \Set  \\
    {}[n]  & \longmapsto & \Fun([n],\CC) ,
  \end{eqnarray*}
  where $\Fun([n],\CC)$ is the {\em set} of strings of $n$ composable arrows.
  Sub\-examples of this are given by any poset or any monoid.  The simplicial
  sets that arise like this are precisely those satisfying the Segal condition
  (which is strict in this context).  If each set is regarded as a discrete
  $\infty$-groupoid, $N\CC$ is thus a Segal space.  In general it is not
  Rezk complete, since some object may have a nontrivial automorphism.
As an example, if $\CC$ is a
  one-object groupoid (i.e.~a group), then inside $(N\CC)_1$ the space of
  equivalences is the whole set $(N\CC)_1$, but the degeneracy map $s_0 :
  (N\CC)_0 \to (N\CC)_1$ is not an equivalence (unless the group is trivial).
\end{blanko}

\begin{blanko}{The fat nerve of an essentially small $1$-category.}\label{fatnerve}
  In most cases it is more interesting to consider the
  {\em fat nerve}. Given a $1$-category $\CC$, the fat nerve of $\CC$ is the simplicial $1$-groupoid 
  \begin{eqnarray*}
    \fatnerve\CC :\simplexcategory\op & \longrightarrow & \oneGrpd \\
    {} [n] & \longmapsto & \Map([n],\CC),
  \end{eqnarray*}
  where $\Map([n],\CC)$ is the mapping space, defined as the
  maximal subgroupoid of the functor $1$-category $\Fun([n],\CC)$.
  In other words, $(\fatnerve\CC)_n$ is the $1$-groupoid
  whose objects are strings of $n$ composable arrows in $\CC$ and
  whose morphisms are isomorphisms of such strings:
   $$
   \xymatrix{
   \cdot \ar[r] \ar[d]^*-[@]=0+!L{\scriptstyle \sim} & \cdot \ar[d]^*-[@]=0+!L{\scriptstyle \sim}
   \ar[r]  & \cdot \ar[r] \ar[d]^*-[@]=0+!L{\scriptstyle \sim} &\cdots\ar[r]&
    \cdot \ar[d]^*-[@]=0+!L{\scriptstyle \sim} \\
   \cdot \ar[r] & \cdot\ar[r] & \cdot\ar[r] &\cdots\ar[r]& \cdot
   }$$
   It is straightforward to check the Segal condition, remembering that the
   pullbacks involved are homotopy pullbacks.  For instance, the pullback
   $X_1\times_{X_0} X_1$ has as objects strings of `weakly composable'
   arrows, in the sense that the target of the first arrow is isomorphic
   to the source of the second, and a comparison isomorphism is specified.  The
   Segal map $X_2 \to X_1 \times_{X_0} X_1$ is the inclusion of the subgroupoid
   consisting of strictly composable pairs.  But any
   weakly composable pair is isomorphic to a strictly composable pair, and
   the comparison isomorphism is unique, hence the inclusion $X_2 \into
   X_1\times_{X_0} X_1$ is an equivalence.  Furthermore, the fat nerve is Rezk 
   complete.  Indeed, it is easy to see that inside $X_1$, the equivalences
   are the invertible arrows of $\CC$. But any invertible arrow is
   equivalent to an identity arrow.

  Note that if $\CC$ is a category with no non-trivial
  isomorphisms (e.g.~any \M category in the sense of Leroux)
  then the fat nerve coincides with the ordinary nerve, and if
  $\CC$ is just equivalent to such a category
  then the fat nerve is level-wise equivalent to the ordinary nerve of any
  skeleton of $\CC$.
\end{blanko}

\begin{blanko}{Joyal--Tierney $t\uppershriek$ --- the fat nerve of an 
  $\infty$-category.}\label{fatinfty}
  The fat nerve construction is just a special case of the general
  construction $t\uppershriek$ of Joyal and Tierney~\cite{Joyal-Tierney:0607820},
  which is a functor from quasi-categories to complete Segal spaces, meaning 
  specifically certain simplicial objects in the category of Kan complexes:
  given a quasi-category $\CC$, the complete Segal space $t\uppershriek \CC$
  is given by
  \begin{eqnarray*}
    \simplexcategory\op & \longrightarrow & \kat{Kan}  \\
    {}[n] & \longmapsto & \big[ [k] \mapsto \kat{sSet}(\Delta[n] \times \Delta'[k], 
    \CC) \big]
  \end{eqnarray*}
  where $\Delta'[k]$ denotes (the ordinary nerve of) the groupoid freely generated by a string of $k$
  invertible arrows.  They show that $t\uppershriek$ constitutes in fact a (right)
  Quillen equivalence between the simplicial sets with the Joyal model
  structure, and bisimplicial sets with the Rezk model structure. 
  
  Taking a more invariant viewpoint, talking about $\infty$-groupoids
  abstractly, the Joyal--Tierney $t\uppershriek$ functor associates to an
  $\infty$-category $\CC$ the Rezk complete Segal space
  \begin{eqnarray*}
    \fatnerve\CC:\simplexcategory\op & \longrightarrow & \Grpd  \\
    {}[n] & \longmapsto & \Map(\Delta[n], \CC) .
  \end{eqnarray*}
  If $\CC$ is a $1$-category regarded as an $\infty$-category 
  (via its ordinary nerve) this agrees with the fat nerve~\ref{fatnerve} 
  regarded as a simplicial $\infty$-groupoid.
\end{blanko}

\begin{blanko}{Fat nerve of bicategories with only invertible $2$-cells.}
  From a bicategory $\CC$ with only invertible $2$-cells one can get a
  simplicial bigroupoid by a construction analogous to the fat nerve.  (In fact,
  this can be viewed as the $t\uppershriek$ construction applied to the Duskin
  nerve of $\CC$.)
  The {\em fat nerve} of a bicategory $\CC$ is the
  complete Segal bigroupoid
\begin{eqnarray*}
\fatnerve\CC:\simplexcategory\op & \longrightarrow & \kat{2Grpd} \\
{}[n] & \longmapsto & \PsFun([n],\CC) ,
\end{eqnarray*}
the bigroupoid of normalised pseudofunctors.
\end{blanko}

\begin{blanko}{Monoidal groupoids.}\label{monoidalgroupoids}
  Important examples of the previous situation come from monoidal groupoids
  $(\MM,\tensor,I)$. Consider $\MM$ as a one-object bicategory $B\MM$ with composition $\tensor$. This is often termed the classifying space of $\MM$. Applying the fat nerve yields a Segal bigroupoid $\fatnerve (B\MM)$, as above, whose zeroth space is the
  classifying space of the full subgroupoid $\MM^{\eq}$ spanned by the
  tensor-invertible objects.
  
  The fat nerve construction can be simplified considerably in the case that $\MM^{\eq}$
  is contractible.  This happens precisely when every tensor-invertible object is
  isomorphic to the unit object $I$ and $I$ admits no
  non-trivial automorphisms.
\end{blanko}

\begin{prop}\label{prop:BM}
  If $(\MM,\tensor,I)$ is a monoidal groupoid such that $\MM^\eq$ is 
  contractible, then the Segal bigroupoid $\fatnerve B\MM$ is
  equivalent to the monoidal nerve: the simplicial $1$-groupoid
  \begin{eqnarray}\label{constr:BM}
    \simplexcategory\op & \longrightarrow & \oneGrpd  \\
    {}[n] & \longmapsto & \MM\times\MM\times\dots\times\MM\nonumber
  \end{eqnarray}
  where the outer face maps project away an outer factor, the inner face maps
  tensor together two adjacent factors, and the degeneracy maps insert a neutral 
  object. This weakly simplicial 1-groupoid is strictly simplicial if and only if the monoidal structure of $\MM$ is strict.
\end{prop}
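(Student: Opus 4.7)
The plan is to construct, for each $[n] \in \simplexcategory$, an equivalence of bigroupoids $\Phi_n: \MM^n \isopil \PsFun([n], B\MM)$, and then verify that these equivalences are natural in $[n]$, thereby giving the claimed equivalence of simplicial bigroupoids. First I would observe that because $\MM^\eq$ is contractible, any tensor-invertible object is uniquely isomorphic to $I$. Since pseudonatural equivalences between pseudofunctors $[n] \to B\MM$ have components that are $1$-equivalences in $B\MM$, i.e.\ tensor-invertible objects of $\MM$, these components are uniquely trivializable; the hom-groupoids of $\PsFun([n], B\MM)$ consequently reduce to sets of modifications between trivial components, so the bigroupoid $\PsFun([n], B\MM)$ is equivalent to a $1$-groupoid.

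The functor $\Phi_n$ sends a tuple $(M_1,\dots,M_n)$ to the normalised pseudofunctor $F$ with $F(i,j) = M_{i+1}\tensor M_{i+2}\tensor\cdots\tensor M_j$ (using, say, a chosen left bracketing), $F(i,i) = I$, and coherence $2$-cells supplied by the associators and unitors of $\MM$. On morphisms, an $n$-tuple of isomorphisms $M_i \to N_i$ determines a modification in the obvious way. Essential surjectivity is a coherence-theorem-style argument: given any normalised pseudofunctor $F:[n]\to B\MM$, set $M_i := F(i-1,i)$; then iterated use of the coherence $2$-cells of $F$ produces a pseudonatural equivalence $\Phi_n(M_1,\dots,M_n) \Rightarrow F$ whose components are (tensor-invertible and hence) uniquely equivalent to $I$. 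Fully faithfulness is immediate after restricting to components equal to $I$: a modification between two such pseudofunctors is exactly an $n$-tuple of $2$-cells in $B\MM$, i.e.\ an $n$-tuple of isomorphisms in $\MM$.

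For naturality in $[n]$, it suffices to verify the square
$$\xymatrix{
\MM^n \ar[r]^-{\Phi_n}\ar[d]_{\alpha^*} & \PsFun([n], B\MM)\ar[d]^{\alpha^*} \\
\MM^m \ar[r]_-{\Phi_m} & \PsFun([m], B\MM)
}$$
commutes up to pseudonatural equivalence for each generating map $\alpha$ in $\simplexcategory$. For an outer face map, restriction of $F = \Phi_n(\vec M)$ along the inclusion $[n-1] \into [n]$ recovers $\Phi_{n-1}$ of the tuple with the first (resp.\ last) entry removed. For an inner face map $d^i$ ($0<i<n$), restriction along $[n-1]\to [n]$ gives the pseudofunctor whose consecutive $1$-cells are $M_1,\dots,M_{i-1}, M_i \tensor M_{i+1}, M_{i+2}, \dots, M_n$, matching the inner face map on the right; the required $2$-cell is precisely an associator. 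For a degeneracy, restriction along the doubling $[n+1]\to[n]$ inserts $F(i,i) = I$ in position $i$, matching insertion of $I$ on the left. All simplicial identities then follow automatically up to canonical $2$-cell, giving the equivalence of simplicial bigroupoids.

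The final claim is immediate from this description: the two sides are isomorphic (not merely equivalent) as strictly simplicial $1$-groupoids precisely when all the associators and unitors invoked above are identities, i.e.\ when $\MM$ is strict monoidal; conversely, strictness of the simplicial identities for the monoidal nerve forces $(M_1\tensor M_2)\tensor M_3 = M_1 \tensor (M_2 \tensor M_3)$ and $I\tensor M = M = M \tensor I$ on the nose. The main obstacle is the essential surjectivity step, which is really a coherence statement for normalised pseudofunctors out of $[n]$; one could shortcut it by citing bicategorical coherence, which guarantees that every such pseudofunctor is equivalent to one strictly determined by its values on generating arrows.
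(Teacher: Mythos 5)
The paper contains no proof to compare against: immediately after the statement the authors write that they have omitted the proof ``to avoid going into $2$-category theory.'' Judged on its own, your argument is the natural one and is correct in outline, with the hypothesis on $\MM^\eq$ deployed in the right place (it is not needed for essential surjectivity of $\Phi_n$, only for collapsing the hom-groupoids). Two points deserve attention. First, you twice write ``modification'' where you mean ``pseudonatural transformation with identity components''; modifications are the genuine $2$-cells of $\PsFun([n],B\MM)$ and must be dealt with separately: contractibility of $\MM^\eq$ is used once to trivialise the components of every pseudonatural equivalence (as you do), and a second time to see that between two reduced transformations the only modification is the identity (its components are morphisms $I\to I$, i.e.\ automorphisms of $I$, which are trivial, and the modification axiom then forces the two transformations to coincide), so that each $\Map(F,G)$ is genuinely equivalent to a set. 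Second, the ``coherence-theorem-style'' essential surjectivity step and the assembly of the levelwise equivalences into an equivalence of simplicial bigroupoids --- checking that the $2$-cells filling the naturality squares for the generators of $\simplexcategory$ are themselves coherent across the simplicial identities --- is precisely the $2$-categorical bookkeeping the authors declined to write down; your deferral to bicategorical coherence is a reasonable way to discharge it, but that is where all the remaining work lives. The final strictness claim is argued correctly.
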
 
\noindent We have omitted the proof, to avoid going into $2$-category theory.

  Examples of monoidal groupoids satisfying the conditions of the proposition
  are the monoidal groupoid $(\B, +, 0)$ of finite sets and bijections, or the
  monoidal groupoid of vector spaces and linear isomorphisms under direct sum.
  In contrast, the monoidal groupoid of vector spaces and linear isomorphisms
  under tensor product is not of this kind, as the unit object $\ground$ has
  many automorphisms.  In this case the monoidal nerve \eqref{constr:BM} gives a
  Segal $1$-groupoid that is not Rezk complete.

\section{Decomposition spaces}
\label{sec:decomp}

Recall from Lemma~\ref{genfreepushout} that active and inert maps in $\simplexcategory$ admit pushouts along each other.
\begin{blanko}{Definition.}\label{def:decomp}
  A \emph{decomposition space} is a simplicial space 
  $$
  X:\simplexcategory\op\to\Grpd
  $$ 
such that the image of any pushout diagram in $\simplexcategory$ of an active map $g$ along 
an inert map $f$ is a pullback of $\infty$-groupoids,
$$ X\!\left(\!\! 
  \vcenter{\xymatrix{{}
   [p] \drpullback{} &  [m]\ar@{->|}[l]_{g'}  \\
   [q]\ar@{ >->}[u]^-{f'}   & \ar@{->|}[l]^{g} [n] \ar@{ >->}[u]_-{f}
  }}
\right)
\qquad=\qquad  \vcenter{\xymatrix{
   X_{p}\ar[d]_{{f'}^*} \ar[r]^-{{g'}^*} \drpullback&  X_m\ar[d]^{{f}^*}  \\
   X_q\ar[r]_-{{g}^*}   &  X_n .
  }}
$$
\end{blanko}

\begin{blanko}{Remark.}\label{DK}
   The notion of decomposition space can be seen as an 
   abstraction of coalgebra, cf.~\S\ref{sec:COALG} below:
   it is precisely the condition required to obtain a counital
   coassociative comultiplication on $\Grpd_{/X_1}$.

  The notion 
  is equivalent to the notion of
  unital (combinatorial) $2$-Segal space introduced by Dyckerhoff and 
  Kapranov~\cite{Dyckerhoff-Kapranov:1212.3563} (their Definition~2.3.1, 
  Definition~2.5.2, Definition 5.2.2, Remark~5.2.4).  Briefly, their
  definition goes as follows.  For any triangulation $T$ of a convex polygon
  with $n$ vertices, there is induced a simplicial subset $\Delta^T \subset 
  \Delta[n]$.  A simplicial space $X$ is called $2$-Segal if, for every
  triangulation $T$ of every convex $n$-gon, the induced map $\Map(\Delta[n],X) \to 
  \Map(\Delta^T,X)$ is a weak homotopy equivalence.
  Unitality is defined separately in terms of pullback conditions involving 
  degeneracy maps, similar to our \eqref{unital-cond} below.  The equivalence between
  decomposition spaces and unital $2$-Segal spaces follows from 
  Proposition~2.3.2 of \cite{Dyckerhoff-Kapranov:1212.3563}
  which gives a pullback criterion for the $2$-Segal condition.
\end{blanko}

\begin{blanko}{Running example: the decomposition space of rooted trees.}\label{ex:trees-decomp}
  We give an example, briefly previewed in \ref{ex:preintro}, of a decomposition space which is not a Segal space,
  to illustrate the combinatorial meaning of the pullback condition: it is
  about structures that can be decomposed but not always composed.
  This example corresponds 
  to the Butcher--Connes--Kreimer Hopf algebra of trees, as will shall see
  when we return to it in \ref{ex:trees-coalg}. 
  
  We define a simplicial groupoid $\dstrees$: take $\dstrees_1$ to be the
  groupoid of forests and, more generally, let $\dstrees_k$ be the groupoid of forests
  with $k-1$ compatible admissible cuts, partitioning the forest into $k$ layers
  (which may be empty), numbered from leaves to the root.
  Thus $\dstrees_0$ is the trivial groupoid with one object: the empty forest.
  
  These groupoids form a simplicial object: the
  outer face maps delete the bottom or the top layer, and the inner face
  maps join adjacent layers.  The degeneracy maps insert an empty layer 
  (i.e.~duplicate an admissible cut).
  The simplicial identities are obviously verified, and one can easily check that $\dstrees$ is in fact a decomposition space. Having the pullback
  $$\xymatrix{
    \dstrees_2 \ar[d]_{d_2} & \dstrees_3 \dlpullback \ar[l]_{d_1} \ar[d]^{d_3} \\
    \dstrees_1 & \dstrees_2 \ar[l]^{d_1}
  }$$
  means any tree with two compatible admissible cuts ($\in \dstrees_3$)
  is uniquely determined by a pair of elements in $\dstrees_2$ with common 
  image in $\dstrees_1$ (under the indicated face maps). 
  For example, the following picture represents elements corresponding to each other
  in the four groupoids.

\begin{center}
    \vspace*{12pt}
    \begin{texdraw}
      \setunitscale 0.7
      \footnotesize
      
      \move (0 0)
      \bsegment
	\htext (20 -5){$\in \dstrees_1$}
	\move (-7 15) \twoskewtr
	\move (7 15) \twolintr
      \esegment

      \move (200 0)
      \bsegment
	\htext (20 -5){$\in \dstrees_2$}	
		\move (-7 15) \twoskewtr
	\move (7 15) \twolintr
	\move (0 15)
	\uppercut
      \esegment

      \move (0 150)
      \bsegment
	\htext (20 -15){$\in \dstrees_2$}
	\move (0 0) \twoVtr
	\lowercut
	\move (-7 15) \twoskewtr
	\move (7 15) \twolintr
      \esegment

      \move (200 150)
      \bsegment
	\htext (20 -15){$\in \dstrees_3$}
	\move (0 0) \twoVtr
	\lowercut
	\move (-7 15) \twoskewtr
	\move (7 15) \twolintr
	\move (0 15)
	\uppercut
      \esegment

      \move (143 118)
      \bsegment
	\move (0 0) \lvec (0 7)
	\move (0 0) \lvec (7 0)
      \esegment
     
      \move (100 170)
      \bsegment
      \move (40 -3) \rlvec (0 6)
      \move (40 0) \ravec (-80 0) 
      \htext (0 12) {$d_1$}
      \esegment

      \move (100 25)
      \bsegment
      \move (40 -3) \rlvec (0 6)
      \move (40 0) \ravec (-80 0) 
      \htext (0 -12) {$d_1$}
      \esegment

      \move (0 75)
      \bsegment
      \move (-3 33) \rlvec (6 0)
      \move (0 33) \avec (0 -3) 
      \htext (-12 15) {$d_2$}
      \esegment

      \move (200 75)
      \bsegment
      \move (-3 33) \rlvec (6 0)
      \move (0 33) \avec (0 -3) 
      \htext (12 15) {$d_3$}
      \esegment

    \end{texdraw}
    \vspace*{12pt}
  \end{center}
  The horizontal maps join layers one and two (i.e.~forget the first 
  admissible cut).  The vertical
  maps discard the last layer.  Clearly the diagram commutes.  To reconstruct the
  tree with two admissible cuts (upper right-hand corner), most of the
  information is already available in the upper left-hand corner, namely the
  underlying tree and one of the cuts.  But the remaining cut
  is precisely available in the lower right-hand
  corner, and their common image in $\dstrees_1$ says precisely how this missing piece
  of information is to be implanted.

  Note that $\dstrees$ is not a
  Segal space: in the diagram above there is a forest with a cut where the two layers do \emph{not} determine the forest.
  Thus the square
    $$\xymatrix{
      \dstrees_2 
      \ar[r]^{d_0}\ar[d]_{d_2} & \dstrees_1 \ar[d]^{d_1} \\
     \dstrees_1 \ar[r]_{d_0} & \dstrees_0\makebox[0em][l]{${}=1$}
   }$$
   is not a pullback square as required by the Segal condition~\ref{segalpqr}~(4) (with $p=q=1$).
\end{blanko}

\begin{blanko}{Alternative formulations of the pullback condition.}
  To verify the conditions of the definition, it will in fact be sufficient to
  check a smaller collection of squares.  On the other hand, the definition will
  imply that many other squares of interest are pullbacks too.  The formulation
  in terms of active and inert maps is preferred both for practical reasons and
  for its conceptual simplicity compared to the smaller or larger collections of
  squares.

  Recall from Lemma \ref{genfreepushout} that the active-inert pushouts used in
  the definition are just the identity extension squares,
  $$\xymatrix{
       [n] \ar@{ >->}[d]\ar@{->|}[rr]^-g &&[k] \ar@{ >->}[d] \\
  [a]\intconcat[n]\intconcat[b]     \ar@{->|}[rr]_-{\id\intconcat g\intconcat\id} && [a]\intconcat[k]\intconcat[b]\,.}$$
  Such a square can be written as a vertical composite of squares in which either $a=1$ and $b=0$, or vice-versa. In turn, since
  the active map $g$ 
  is a composite of inner coface maps $d^i:[m-1]\to[m]$ ($0<i<m$) and codegeneracy maps $s^j:[m+1]\to[m]$, these squares are horizontal composites of pushouts of a single active $d^i$ or $s^j$ along $d^\bot$ or $d^\top$. 
  Thus, to check that $X$ is a decomposition space, it is sufficient to check the following special cases
  are pullbacks, for $0<i<n$ and  $0\leq j\leq n$:
  $$  
  \xymatrix{
   X_{1+n}\ar[d]_{d_\bot}\drpullback 
\ar[r]^-{d_{1+i}}
 &  X_n\ar[d]^{d_\bot} \\
   X_{n}\ar[r]_-{d_i}  &  X_{n-1},
  }
\qquad
\xymatrix{
   X_{n+1}\ar[d]_{d_\top}\drpullback 
\ar[r]^-{d_{i}}
 &  X_n\ar[d]^{d_\top} \\
   X_{n}\ar[r]_-{d_i}  &  X_{n-1},
  }
$$
\begin{equation}\label{unital-cond}
\vcenter{\xymatrix{
   X_{1+n}\ar[r]^{s_{1+j}}\drpullback 
\ar[d]_-{d_\bot}
 & X_{1+n+1}\ar[d]^{d_\bot} \\
  X_n  \ar[r]_-{s_j}  &  X_{n+1},
}}\qquad
\vcenter{\xymatrix{
   X_{n+1}\ar[d]_{d_\top}\drpullback 
\ar[r]^-{s_{j}}
 &  X_{n+1+1} \ar[d]^{d_\top} \\
X_n  \ar[r]_-{s_j}  &  X_{n+1}.
  }}
\end{equation}

The following proposition shows we can be more economic: instead of checking all $0<i<n$ it is enough 
to
check all $n\geq 2$ and {\em some} $0<i<n$, and instead of checking all
$0\leq j \leq n$ it is enough to check the case $j=n=0$.
\end{blanko}

\begin{prop}\label{onlyfourdiags}
A simplicial space $X$ is a decomposition space if and only if 
the following diagrams are pullbacks
$$  
\xymatrix{
   X_1\ar[r]^{s_1}\drpullback 
\ar[d]_-{d_\bot}
 & X_2\ar[d]^{d_\bot} \\
  X_0  \ar[r]_-{s_0}  &  X_1,
}\qquad
\xymatrix{
   X_1\ar[d]_{d_\top}\drpullback 
\ar[r]^-{s_0}
 &  X_2 \ar[d]^{d_\top} \\
X_0  \ar[r]_-{s_0}  &  X_1,
  }
$$
and the following diagrams are pullbacks for some choice of $i=i_n$, 
$0<i<n$, for each $n\geq 2$:
$$  
\xymatrix{
   X_{1+n}\ar[d]_{d_\bot}\drpullback 
\ar[r]^-{d_{1+i}}
 &  X_n\ar[d]^{d_\bot} \\
   X_{n}\ar[r]_-{d_i}  &  X_{n-1},
  }
\qquad
\xymatrix{
   X_{n+1}\ar[d]_{d_\top}\drpullback 
\ar[r]^-{d_{i}}
 &  X_n\ar[d]^{d_\top} \\
   X_{n}\ar[r]_-{d_i}  &  X_{n-1}.
  }
$$
\end{prop}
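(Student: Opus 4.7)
The plan is to reduce in two steps: first the inner-face-map conditions, then the degeneracy conditions. The $d_\top$-family is handled by an entirely symmetric argument, so I discuss only the $d_\bot$-family throughout.

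For the inner face maps, I induct on $n \geq 2$. The base $n = 2$ is trivial since $[1] \to [2]$ admits only one inner coface, namely $d^1 = d^{i_2}$. For the inductive step, fix $0 < j < n$ with $j \neq i_n$, and form the prism obtained by horizontally composing the hypothesised pullback for $d_{i_n}$ at level $n$ on the left with an inductively known pullback for $d_k$ at level $n-1$ on the right: take $k = j$ if $j < i_n$, and $k = j-1$ if $j > i_n$. Using the cosimplicial identity $d^j d^i = d^i d^{j-1}$ (valid for $i < j$), applied separately in the top and bottom rows, I rewrite the outer rectangle as a different horizontal composite whose left factor is the pullback for $d_j$ at level $n$ and whose right factor is another inductively known pullback --- for $d_{i_n - 1}$ at level $n-1$ in the first case, for $d_{i_n}$ at level $n-1$ in the second. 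A short check confirms that the auxiliary indices lie in the inner range at level $n-1$. Since the outer rectangle is a pullback (as a composite of two pullbacks in the original factorisation) and the right square of the refactoring is a pullback, the prism lemma (Lemma~\ref{pbk}) yields the left square of the refactoring as a pullback, which is the desired condition.

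For the degeneracies, given $0 \leq j \leq n$, form the prism
$$
\xymatrix{
X_{1+n} \ar[r]^{s_{1+j}} \ar[d]_{d_\bot} & X_{1+n+1} \ar[d]^{d_\bot} \ar[r]^{d_{2+j}} & X_{1+n} \ar[d]^{d_\bot} \\
X_n \ar[r]_{s_j} & X_{n+1} \ar[r]_{d_{1+j}} & X_n,
}
$$
whose horizontal composites are both identities by the simplicial identity $d_{i+1} s_i = \id$. The outer rectangle is therefore trivially a pullback, and the right square is precisely the pullback for $d_{1+j}$ at level $n+1$, already known from the first step provided $0 \leq j \leq n-1$. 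The prism lemma then gives the left square as the desired degeneracy pullback. The case $j = n \geq 1$ is identical but uses the alternative identity $d_i s_i = \id$, placing the pullback for $d_n$ at level $n+1$ on the right. The remaining case $j = n = 0$ is the explicit base hypothesis.

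The only substantive obstacle is combinatorial bookkeeping --- choosing the correct auxiliary pullback to paste alongside $d_{i_n}$, verifying the simplicial identities that drive each refactorisation, and tracking that the auxiliary indices remain inner. The rest is a direct application of the prism lemma.
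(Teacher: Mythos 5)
Your proof is correct. The degeneracy step coincides with the paper's: realise $s_j$ (for $n\geq 1$) as a section of an inner face map $d_i$ with $i=j$ or $i=j+1$, observe that the horizontal composites of the resulting prism are identities, and apply Lemma~\ref{pbk}, the only irreducible case being $s^0:[1]\to[0]$. For the inner face maps you take a genuinely different route. The paper (Lemma~\ref{lem:fewerdiagrams}) first composes the hypothesised squares horizontally all the way down to a single square over $X_2\to X_1$; this long composite is independent of the choices $i_n$ because there is a unique active map $[1]\genmap[n]$, and the square for an arbitrary inner $d_i$ at level $n$ is then recovered by one application of the prism lemma against the level-$(n-1)$ long composite. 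Your one-step induction instead pastes the $d_{i_n}$-square at level $n$ against a level-$(n-1)$ square and refactors via the cosimplicial identities; I checked the bookkeeping ($k=j$ when $j<i_n$ with auxiliary index $i_n-1$, and $k=j-1$ when $j>i_n$ with auxiliary index $i_n$, all of which stay in the inner range at level $n-1$) and it is right. The paper's intermediate condition buys freedom from any case analysis on indices, at the cost of introducing the long composite; your version is more elementary and self-contained but carries the two-case split and the verification that the auxiliary squares really are among the inductively known ones. Both arguments rest on the same two facts: pullbacks compose horizontally, and Lemma~\ref{pbk} cancels a pullback on the right.
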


\begin{proof}
  To see the non-necessity of the other degeneracy cases, observe
  that for $n>0$, every degeneracy map $s_j: X_n \to X_{n+1}$
  is the section of an {\em inner} face map
  $d_i$ (where $i=j$ or $i=j+1$).  Now in the diagram
  $$
  \xymatrix{
   X_{1+n}\ar[r]^{s_{1+j}} \ar[d]_-{d_\bot} & X_{1+n+1}\ar[d]^{d_\bot} \ar[r]^{d_{1+i}} & 
   X_{1+n} \ar[d]^{d_\bot}\\
  X_n  \ar[r]_-{s_j}  &  X_{n+1} \ar[r]_{d_i} & X_n,
  }$$
  the horizontal composites are identities, so the outer rectangle is a
  pullback, and the right-hand square is a pullback since it is one of cases
  outer face with inner face.  Hence the left-hand square, by Lemma~\ref{pbk}, is a pullback too.
  The case $s_0: X_0 \to X_1$ is the only degeneracy map that is not the section
  of an inner face map, so we cannot eliminate the two cases involving this map.
  The non-necessity of the other inner-face-map cases is the content of the
  following lemma.
\end{proof}

\begin{lemma}\label{lem:fewerdiagrams}
The following are equivalent for a simplicial space $X$. 
\begin{enumerate}
  \item  For each $n\geq2$, the following diagram is a pullback for all $0<i<n$:
$$  
\vcenter{\xymatrix{
   X_{1+n}\ar[d]_{d_\bot}\drpullback 
\ar[r]^-{d_{1+i}}
 &  X_n\ar[d]^{d_\bot} \\
   X_{n}\ar[r]_-{d_i}  &  X_{n-1}
  }}
\qquad\left(\text{resp. }\vcenter{
\xymatrix{
   X_{n+1}\ar[d]_{d_\top}\drpullback 
\ar[r]^-{d_{i}}
 &  X_n\ar[d]^{d_\top} \\
   X_{n}\ar[r]_-{d_i}  &  X_{n-1}
  }}\right){.}
$$

 \item 
For each $n\geq2$, the above diagram is a pullback for some $0<i<n$.
 \item 
For each $n\geq2$, the following diagram is a pullback:
$$
\vcenter{\xymatrix{
   X_{1+n}\ar[d]_{d_\bot}\drpullback 
\ar[r]^-{{d_2}^{n-1}}
 &  X_2\ar[d]^{d_\bot} \\
   X_n\ar[r]_-{{d_1}^{n-1}}  &  X_{1}
  }}
\qquad\left(\text{resp. }\vcenter{\xymatrix{
   X_{n+1}\ar[d]_{d_\top}\drpullback 
\ar[r]^-{{d_1}^{n-1}}
 &  X_2\ar[d]^{d_\top} \\
   X_n\ar[r]_-{{d_1}^{n-1}}  &  X_{1}
  }}\right){.}
$$
\end{enumerate}
\end{lemma}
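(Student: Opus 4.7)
The plan is to establish the cycle $(1) \Rightarrow (2) \Rightarrow (3) \Rightarrow (1)$. The first implication is immediate: since $n \geq 2$ there exists at least one $i$ with $0 < i < n$, so a universal statement specialises to an existential one.

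For $(2) \Rightarrow (3)$, I will induct on $n \geq 2$. In the base case $n = 2$, the only option is $i = 1$ and conditions (1), (2), (3) all reduce to the same square, so there is nothing to prove. For the inductive step, assume (3) holds at level $n-1$, and let $i_n$ be the witness provided by (2) at level $n$. Using the simplicial identities, decompose the (3)-square at level $n$ as the horizontal pasting of two squares:
$$
\xymatrix{
X_{n+1} \ar[r]^-{d_{1+i_n}} \ar[d]_{d_\bot} & X_n \ar[r]^-{d_2^{n-2}} \ar[d]^{d_\bot} & X_2 \ar[d]^{d_\bot} \\
X_n \ar[r]_-{d_{i_n}} & X_{n-1} \ar[r]_-{d_1^{n-2}} & X_1.
}
$$
The left square is a pullback by (2) at level $n$, and the right square is (3) at level $n-1$, a pullback by the inductive hypothesis. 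Pasting then yields that the outer rectangle, which is precisely (3) at level $n$, is a pullback.

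For $(3) \Rightarrow (1)$, fix $n \geq 2$ and any $0 < j < n$. For $n = 2$ the (1)-square at $(2, 1)$ already coincides with the (3)-square at level $2$, so nothing is needed. For $n \geq 3$, consider the same prism with $i_n$ replaced by $j$: the outer rectangle and the right square are now both instances of (3) (at levels $n$ and $n-1$), both pullbacks by hypothesis, and so Lemma~\ref{pbk} forces the left square, which is the $(n, j)$-instance of (1), to be a pullback.

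The main bookkeeping is verifying that the horizontal composites in the prism agree with the iterated face maps $d_2^{n-1}$ and $d_1^{n-1}$ and that the prism commutes; both reduce to standard simplicial identities (such as $d_0 d_{k} = d_{k-1} d_0$) and to tracking which vertex-subsets are preserved by each composite, namely $\{0,1,n+1\}$ in the top row and $\{0,n\}$ in the bottom. The "resp." (top) version of the lemma, obtained by replacing $d_\bot$ with $d_\top$ and $d_2^{n-2}$ with $d_1^{n-2}$ in the top row of the prism, is handled by an entirely symmetric argument.
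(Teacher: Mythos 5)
Your proposal is correct and follows essentially the same route as the paper: (1)$\Rightarrow$(2) by specialisation, (2)$\Rightarrow$(3) by horizontally pasting the level-$n$ square from (2) with the level-$(n-1)$ instance of (3) (using that the composite of inner face maps $X_n\to X_1$ is the unique active map, so the choice of $i$ is immaterial), and (3)$\Rightarrow$(1) by the two-out-of-three property of pullbacks (Lemma~\ref{pbk}) applied to the same prism. Your version merely makes the induction in (2)$\Rightarrow$(3) explicit and records the exponents correctly ($d_2^{n-2}$ on the middle arrow, where the paper's displayed diagram has a small off-by-one in the label).
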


\begin{proof} The hypothesised pullback in (2) is a special case of that in (1),
  and that in (3) is a horizontal composite of those in (2), since there is a unique active map $[1]\to[n]$ in $\simplexcategory$ for each $n$. 
The implication (3) $\Rightarrow$ (1) follows by Lemma~\ref{pbk} and the commutativity for 
$0<i<n$ of the diagram
$$
\vcenter{\xymatrix{
   X_{1+n} \drpullback \ar[r]^{d_{1+i}} \ar[d]_{d_\bot} & 
   X_n \drpullback \ar[r]^{{d_2}^{n-1}} \ar[d]_{d_\bot} &
   X_2 \ar[d]^{d_\bot} \\
   X_n \ar[r]_{d_i} &
   X_{n-1} \ar[r]_{{d_1}^{n-1}} &
   X_1\,.
   }}
   $$
Similarly for the `resp.' case.
\end{proof}

\begin{prop}\label{prop:segal=>decomp1}
  Any Segal space is a decomposition space.
\end{prop}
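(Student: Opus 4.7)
The plan is to reduce to the minimal list of pullback squares established in Proposition~\ref{onlyfourdiags} and verify each directly from the Segal condition in its iterated form $X_n \simeq X_1 \times_{X_0} \cdots \times_{X_0} X_1$ (Lemma~\ref{segalpqr}(1)).

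Under this identification one may think of a point of $X_n$ as a sequence $(f_1, \ldots, f_n)$ of composable $1$-simplices. The outer face map $d_\bot : X_{n+1} \to X_n$ drops $f_1$, the map $d_\top$ drops $f_{n+1}$, and an inner face map $d_j$ with $0<j<n+1$ applies the Segal-induced composition $X_2 \to X_1$ to the adjacent pair $(f_j, f_{j+1})$. The degeneracy $s_0 : X_0 \to X_1$ sends a $0$-simplex to its degenerate $1$-simplex. All of these translations are immediate consequences of the Segal equivalence being a natural transformation of simplicial diagrams.

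For each $n \geq 2$ it suffices by Lemma~\ref{lem:fewerdiagrams} to check the inner-face-map diagrams for a single value of $i$, and the choice $i=1$ is the most transparent. The fibre product $X_n \times_{X_{n-1}} X_n$ of the bottom arrow $d_1$ and the right arrow $d_\bot$ then parameterizes pairs of length-$n$ sequences that agree after composing the first two entries of one and discarding the first entry of the other; such data is plainly equivalent to a single length-$(n+1)$ sequence, exhibiting the required pullback. The $d_\top$ case is symmetric, using the splitting $X_{n+1} \simeq X_n \times_{X_0} X_1$ instead. For the two degeneracy squares of Proposition~\ref{onlyfourdiags}, the same method applied to $X_2 \simeq X_1 \times_{X_0} X_1$ identifies the fibre product $X_0 \times_{X_1} X_2$ with $X_1$ via the map induced by $s_1$ (resp.\ $s_0$) and $d_\bot$ (resp.\ $d_\top$).

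The only real work is the bookkeeping that translates simplicial operators into operations on sequences of composable arrows. Once this dictionary is in place, no substantive argument remains beyond the Segal condition itself, so I expect no genuine obstacle.
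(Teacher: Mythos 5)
Your reduction to Proposition~\ref{onlyfourdiags} (via Lemma~\ref{lem:fewerdiagrams}) is exactly the paper's strategy, and your choice $i=1$ is as legitimate as the paper's choice $i=n-1$. The divergence, and the problem, is in how you then certify that each square is a pullback. You describe the homotopy fibre product $X_n\times_{X_{n-1}}X_n$ by saying what its ``points'' are under the Segal identification $X_n\simeq X_1\times_{X_0}\cdots\times_{X_0}X_1$, and then assert that this data ``is plainly equivalent to a single length-$(n+1)$ sequence.'' That sentence is the entire content of the proposition: everything before it is bookkeeping, as you say, but the assertion itself is precisely the statement to be proved, namely that the canonical map $X_{n+1}\to X_n\times_{X_{n-1}}X_n$ induced by the square is an equivalence. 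In the $\infty$-categorical setting one cannot read this off from a description of objects of the fibre product; one has to exhibit the comparison map and show it is an equivalence coherently, and ``plainly'' does not do that. The same criticism applies to your treatment of the degeneracy squares.

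The gap is fillable, and the standard filling is the paper's argument, which avoids point-level reasoning entirely: paste the square in question horizontally against a square whose horizontal maps are outer (inert) face maps, so that the horizontal composites become $d_\top\circ d_\top$ (resp.\ identities in the degeneracy case); then both the outer rectangle and the right-hand square are pullbacks by the Segal condition in the form of Lemma~\ref{segalpqr}(3), and Lemma~\ref{pbk} forces the left-hand square to be a pullback. I would either adopt that pasting argument outright, or, if you want to keep your explicit description of the fibre products, upgrade it to an actual chain of equivalences of iterated fibre products (using that the Segal maps are equivalences and that pullbacks compose), which amounts to the same thing in less convenient notation.
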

\begin{proof}
  Let $X$ be Segal space. In the diagram ($n\geq 2$)
  $$
\vcenter{\xymatrix{
   X_{n+1}\ar[d]_{d_\bot} 
\ar[r]^-{d_n}
 &  X_n\ar[d]_{d_\bot}\drpullback 
\ar[r]^-{d_\top}
 &  X_{n-1}\ar[d]^{d_\bot} \\
   X_{n}\ar[r]_-{d_{n-1}}  &  X_{n-1} \ar[r]_-{d_\top}  &  X_{n-2} ,
  }}
$$
since the horizontal composites are equal to $d_\top\circ d_\top$, both the outer
rectangle and the right-hand square are pullbacks by the Segal condition 
(\ref{segalpqr}~(3)).
Hence the left-hand square is a pullback.  This establishes the third
pullback condition in Proposition~\ref{onlyfourdiags}. 
In the diagram
  $$
\vcenter{\xymatrix{
   X_{1}\ar[d]_{d_\bot} 
\ar[r]^-{s_1}
 &  X_2\ar[d]_{d_\bot}\drpullback 
\ar[r]^-{d_\top}
 &  X_1\ar[d]^{d_\bot} \\
   X_0\ar[r]_-{s_0}  &  X_1 \ar[r]_-{d_\top}  &  X_0 ,
  }}
$$
since the horizontal composites are identities, the outer rectangle is a 
pullback, and the right-hand square is a pullback by the Segal condition.
Hence the left-hand square is a pullback, establishing the
first of the
pullback conditions in Proposition~\ref{onlyfourdiags}.
The remaining two conditions of Proposition~\ref{onlyfourdiags}, those
involving $d_\top$ instead of $d_\bot$, are obtained
similarly by interchanging the roles of $\bot$ and $\top$.
\end{proof}

\begin{blanko}{Remark.}
  This result was also obtained by 
  Dyckerhoff and Kapranov~\cite{Dyckerhoff-Kapranov:1212.3563}
  (Propositions~2.3.3, 2.5.3, and 5.2.6).
\end{blanko}

Corollary~\ref{d1s0Delta} implies the following important property of decomposition spaces.

\begin{lemma}\label{lem:s0d1}
    In a decomposition space $X$, every active face map is a pullback of
    $d_1: X_2 \to X_1$, and every degeneracy map is a pullback of $s_0 :X_0 \to X_1$.
\end{lemma}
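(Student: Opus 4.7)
\begin{proof*}{Proof plan.}
The statement is essentially a direct translation of Corollary~\ref{d1s0Delta} through the defining property of a decomposition space, so the plan is very short.

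First, for the active face map case, given any inner coface map $d^i:[n-1]\to[n]$ (with $0<i<n$), invoke Corollary~\ref{d1s0Delta} to exhibit it as the pushout of $d^1:[1]\to[2]$ along an inert map, producing a square
\[
\xymatrix{
[1] \ar@{->|}[r]^{d^1} \ar@{ >->}[d] & [2] \ar@{ >->}[d] \\
[n-1] \ar@{->|}[r]_{d^i} & [n]
}
\]
in $\simplexcategory$ which is a pushout of an active map along an inert map. Applying $X:\simplexcategory\op\to\Grpd$ and using Definition~\ref{def:decomp} turns this into a pullback square in $\Grpd$ whose parallel sides are $d_1:X_2\to X_1$ and $d_i:X_n\to X_{n-1}$. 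Thus $d_i$ is a pullback of $d_1$.

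Second, for the degeneracy case, proceed identically: every codegeneracy $s^j:[n+1]\to[n]$ is, by Corollary~\ref{d1s0Delta}, a pushout of $s^0:[1]\to[0]$ along an inert map, and applying $X$ and invoking Definition~\ref{def:decomp} exhibits $s_j:X_n\to X_{n+1}$ as a pullback of $s_0:X_0\to X_1$.

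There is no real obstacle; all the work is contained in Corollary~\ref{d1s0Delta} (which is the combinatorial statement about $\simplexcategory$) and in the definition of decomposition space (which converts such pushouts to pullbacks). The only thing to note is that ``active face map'' here means exactly an inner coface map, so Corollary~\ref{d1s0Delta} applies directly.
\end{proof*}
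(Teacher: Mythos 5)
Your proof is correct and is precisely the paper's intended argument: the paper offers no separate proof, stating only that Corollary~\ref{d1s0Delta} implies the lemma, and your write-up spells out exactly that derivation --- realise each inner face map (resp.\ degeneracy) as an active-inert pushout of $d^1:[1]\to[2]$ (resp.\ $s^0:[1]\to[0]$) and apply Definition~\ref{def:decomp} to turn it into the required pullback. Your closing remark that ``active face map'' means inner face map is also the correct reading.
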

Thus, even though the spaces in degree $\geq 2$
are not fibre products of $X_1$ as in a Segal space,
the higher active face maps and degeneracies 
are determined by `unit' and `composition',
$$
\xymatrix{
X_0 \ar[r]^{s_0} & X_1 & \ar[l]_{d_1} X_2 .
}$$

In $\simplexcategory\op$ there are more pullbacks than those between active and inert.
Diagram \eqref{pm} in \ref{bla:pm} is a pullback in
$\simplexcategory\op$ that is not preserved by all decomposition spaces, though it is
preserved by all Segal spaces.  
On the other hand, certain other pullbacks in
$\simplexcategory\op$ are preserved by general decomposition spaces.  We call them
colloquially `bonus pullbacks':

\begin{lemma}\label{bonus-pullbacks}
  Let $X$ be a decomposition space.
  For all $n\geq 3$ and all $0<i<j<n$, the following squares of active face and 
  degeneracy maps are pullbacks.
  $$\vcenter{\xymatrix{
     X_{n+1} \drpullback \ar[r]^-{d_i}\ar[d]_{d_{j+1}} & X_{n} \ar[d]^{d_j} \\
     X_{n} \ar[r]_-{d_i} & X_{n-1} 
  }} \ \
  \qquad
  \qquad
  \vcenter{\xymatrix{
     X_{n-3} \drpullback \ar[r]^-{s_{i-1}}\ar[d]_{s_{j-2}} & X_{n-2} \ar[d]^{s_{j-1}} \\
     X_{n-2} \ar[r]_-{s_{i-1}} & X_{n-1} 
  }}$$

  $$\vcenter{\xymatrix{
     X_{n-1} \drpullback \ar[r]^{d_i}\ar[d]_{s_j} & X_{n-2} \ar[d]^{s_{j-1}} \\
     X_{n} \ar[r]_{d_i} & X_{n-1} 
  }}  \qquad
  \qquad
  \vcenter{\xymatrix{
     X_{n-1} \drpullback \ar[r]^{d_{j-1}}\ar[d]_{s_{i-1}} & X_{n-2} \ar[d]^{s_{i-1}} \\
     X_{n} \ar[r]_{d_j} & X_{n-1} .
  }}$$
\end{lemma}

\begin{proof}
  We do the first square;  the others are very similar.
    In the composite square
    $$\xymatrix{
     X_{n+1} \ar[r]^{d_i}\ar[d]_{d_{j+1}} & X_{n} \ar[d]^{d_j} 
     \ar[r]^{d_0{}^i} & X_{n-i} \ar[d]^{d_{j-i}}\\
     X_{n} \ar[r]_{d_i} & X_{n-1} \ar[r]_{d_0{}^i} & X_{n-1-i}
  }$$
  the right-hand square is a pullback by the decomposition-space axiom.  The
  composite horizontal maps are composites of bottom face maps, since $d_0{}^i
  \circ d_i = d_0{}^{i+1}$.  Therefore also the composite square is a pullback,
  again by the decomposition-space axiom.  But then the left-hand square is a
  pullback by the usual pullback argument of Lemma~\ref{pbk}.
\end{proof}

\begin{blanko}{Remark.}
  Informally, the lemma states that a given degeneracy map $s_i$ forms pullbacks
  against any other face or degeneracy map, except against $d_{i+1}$ (and except
  against itself), and that a given active face map $d_i$ forms pullbacks
  against any other face or degeneracy maps, except against $s_{i-1}$ (and
  except against itself).  The cases excluded will play a role to characterise
  important special classes of decomposition spaces:
  the pullback squares with $s_i$ against itself characterise {\em complete}
  decomposition spaces \cite[\S 2]{GKT:DSIAMI-2}, while the pullback squares
  with $s_i$ against $d_{i+1}$ expresses the property of being {\em split}
  \cite[\S 5]{GKT:DSIAMI-2}.
\end{blanko}

\begin{blanko}{Remark.}
  In $1$-category theory, all commuting squares of codegeneracy maps in 
  $\simplexcategory$ are {\em absolute} pushouts
  (see Joyal--Tierney~\cite[Thm.~1.2.1]{Joyal-Tierney:Q47}), hence in every
   simplicial {\em set} $X$ the squares of Case 2 of Lemma~\ref{bonus-pullbacks} are pullbacks. However, those 
  pushout squares are not absolute in the sense of $\infty$-categories,
  and not all simplicial {\em spaces} $X$ satisfy this condition,
  which is a special feature of decomposition spaces.
\end{blanko}
\section{\culf functors and decalage}

\label{sec:cULF}

A simplicial map $F: Y \to X$
is called {\em ULF (unique lifting of factorisations)}
if it is a cartesian natural transformation on each active coface
map of $\simplexcategory$.  It is called {\em conservative} if it is cartesian
on each codegeneracy map. It is called {\em \culf} if it is both conservative and ULF. 

\begin{lemma}\label{lem:simp-culf}
  For a simplicial map $F: Y \to X$,
  the following are equivalent.
  \begin{enumerate}
  \item $F$ is cartesian on each inner coface map and on each codegeneracy map (i.e.~\culf).
  \item $F$ is cartesian on each active map of the form $[1]\to [n]$.
  \item $F$ is cartesian on all active maps.
  \end{enumerate}
\end{lemma}

\begin{proof}
  The implication $(1)\Rightarrow (2)$ is easy
  since the active map $[1]\to [n]$ factors as a sequence of inner coface maps
  (or is a codegeneracy map if $n=0$).
  For the implication $(2) \Rightarrow (3)$, consider 
  a general active map $[n]\to [m]$, and
  observe that if $F$ is cartesian on the composite of
  active maps $[1]\to[n]\to[m]$ and also on the active
  map $[1]\to[n]$, then it is cartesian on $[n]\to[m]$ also, by Lemma~\ref{pbk}.
  The implication $(3) \Rightarrow (1)$ is trivial.
\end{proof}

\begin{prop}\label{prop:ULF=>cons}
  Any ULF map  between decomposition spaces is conservative also.
\end{prop}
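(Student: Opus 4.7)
My plan is to prove $F$ is cartesian on every degeneracy $s_j\colon Y_n\to Y_{n+1}$, splitting into two cases: $n\geq 1$, which uses only ULF, and $s_0\colon Y_0\to Y_1$, which needs the decomposition space structure of both $Y$ and $X$. For $n\geq 1$, at least one of the face maps $d_j, d_{j+1}\colon Y_{n+1}\to Y_n$ is inner --- namely $d_j$ when $j\geq 1$, and $d_1$ when $j=0$ (which is inner since $n+1\geq 2$). Picking an inner $d_i$ with $d_i\circ s_j=\id$ and pasting the two naturality squares
\[
\xymatrix{
Y_n \ar[r]^{s_j} \ar[d]_F & Y_{n+1} \ar[r]^{d_i} \ar[d]_F & Y_n \ar[d]^F \\
X_n \ar[r]_{s_j} & X_{n+1} \ar[r]_{d_i} & X_n\,,
}
\]
the horizontal composites are identities, so the outer rectangle is a pullback; the right-hand $d_i$-square is a pullback by ULF. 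Lemma~\ref{pbk} then forces the left-hand $s_j$-square to be a pullback.

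For $s_0\colon Y_0\to Y_1$, both $d_0, d_1$ out of $Y_1$ are outer, so the splitting trick fails. To handle this case I use the bonus pullback of Lemma~\ref{newbonuspullbacks} with $n=0$, $i=0$, $j=1$: in any decomposition space the square
\[
\xymatrix{
X_0 \drpullback \ar[r]^{s_0} \ar[d]_{s_0} & X_1 \ar[d]^{s_0} \\
X_1 \ar[r]_{s_1} & X_2
}
\]
is a pullback, and similarly in $Y$. Form the cube whose front and back faces are these pullback squares and whose vertical edges are given by $F$. By the previous case $F$ is cartesian on $s_1\colon Y_1\to Y_2$ (split by the inner face $d_1$), so the bottom face of the cube --- the $s_1$-naturality square --- is a pullback.

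It remains to paste pullbacks to conclude that the top face, which is the $s_0\colon Y_0\to Y_1$ naturality square, is a pullback. Starting from the front pullback $Y_0\simeq Y_1\times_{Y_2} Y_1$ (via $s_0$ and $s_1$) and substituting the bottom-face pullback $Y_1\simeq X_1\times_{X_2} Y_2$ into the $s_1$-factor yields $Y_0\simeq Y_1\times_{X_2} X_1$; using the back pullback $X_0\simeq X_1\times_{X_2} X_1$ to rewrite the target then gives $Y_0\simeq Y_1\times_{X_1} X_0$, which is precisely the pullback condition for the top face. The only subtle step in the proof is this cube pasting, and it follows directly from the associativity of pullbacks.
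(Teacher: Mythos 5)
Your proof is correct and follows essentially the same route as the paper's: the paper likewise reduces everything to the single degeneracy $s_0\colon Y_0\to Y_1$ (via Lemma~\ref{lem:simp-culf}(3) rather than by treating each $s_j$ with $n\geq 1$ separately, though the underlying section-of-an-inner-face-map argument is identical), and then runs the same cube built from the two bonus pullbacks of Lemma~\ref{newbonuspullbacks} together with a degeneracy square $Y_1\to Y_2$ known to be cartesian as a section of the ULF $d_1$-square. The only difference is cosmetic: the paper orients the cube with the bonus squares as top and bottom faces and uses $s_0\colon Y_1\to Y_2$ where you use $s_1$.
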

\begin{proof}
  By Lemma~\ref{lem:simp-culf}(2) it is enough to prove that $F$ is cartesian on
  active maps of the form $[1]\to[n]$.  Since $F$ is ULF, we already know it is 
  cartesian on $[1]\to[n]$ for $n\geq1$, so it remains to check the map 
    $s^0:[1]\to[0]$.
  In the diagram 
  $$
  \xymatrix  {
  Y_0 \ar[rrd]^(0.7){s_0}
  \ar[r]^{s_0} \ar[d] & Y_1 \ar[rrd]^(0.7){s_1}\ar[d]|\hole&& \\
  X_0 \ar[rrd]_(0.4){s_0} \ar[r]^{s_0} & X_1 \ar[rrd]|\hole^(0.7){s_1} & Y_1  \ar[r]_{s_0}  
  \ar[d] & Y_2 \ar[d] \drpullback \ar@{-->}[r]^{d_1} & Y_1 \ar@{-->}[d]\\
  && X_1 \ar[r]_{s_0} & X_2 \ar@{-->}[r]_{d_1} & X_1
  }
$$
the front square is a pullback since it is a section to the dashed square, which
is a pullback since $F$ is ULF. The top and bottom faces of the cube are
pullbacks by Lemma~\ref{bonus-pullbacks}, so the back face is a pullback
square by the basic Lemma~\ref{pbk}.
\end{proof}
\begin{lemma}\label{lem:cULFs0d1}
  A simplicial map between decomposition spaces 
  is \culf if and only if
  it is cartesian on the active map $[1]\to[2]$.
\end{lemma}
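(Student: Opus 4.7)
\begin{proof*}{Proof plan.}
The forward implication will be immediate: a \culf\ map is by definition cartesian on every active map, in particular on the inner coface map $d^1\colon[1]\to[2]$, whose induced simplicial map is $d_1\colon X_2\to X_1$.

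For the converse, suppose $F$ is cartesian on $d_1$, i.e.\ $Y_2\simeq Y_1\times_{X_1}X_2$. My plan is first to upgrade this to cartesianness of $F$ on every inner face map (giving ULF), and then to invoke Proposition~\ref{prop:ULF=>cons} to deduce the degeneracy case automatically; combined with Lemma~\ref{lem:simp-culf}, this will give \culf.

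For the first step, I would fix an inner coface map $d^i\colon[n-1]\to[n]$ with $0<i<n$. By Corollary~\ref{d1s0Delta}, $d^i$ sits in an active-inert pushout with $d^1\colon[1]\to[2]$ in $\simplexcategory$. Applying the decomposition space axiom to $Y$ and to $X$ (cf.\ Lemma~\ref{lem:s0d1}) would then yield pullback squares
$$\xymatrix{
Y_n \drpullback \ar[r] \ar[d]_{d_i} & Y_2 \ar[d]^{d_1} \\ Y_{n-1} \ar[r] & Y_1
} \qquad \text{and} \qquad \xymatrix{
X_n \drpullback \ar[r] \ar[d]_{d_i} & X_2 \ar[d]^{d_1} \\ X_{n-1} \ar[r] & X_1
}$$
whose horizontal maps are induced by the same inert map in $\simplexcategory$. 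A short iterated pullback calculation, exploiting the standing hypothesis $Y_2\simeq Y_1\times_{X_1}X_2$, should give
$$
Y_{n-1}\times_{X_{n-1}} X_n \;\simeq\; Y_{n-1}\times_{X_1} X_2 \;\simeq\; Y_{n-1}\times_{Y_1}Y_2 \;\simeq\; Y_n ,
$$
which is precisely the cartesianness of $F$ on $d_i$.

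Having verified cartesianness of $F$ on every inner face map, $F$ will be ULF. Proposition~\ref{prop:ULF=>cons} --- whose hypothesis is exactly that source and target be decomposition spaces --- will then supply conservativity as a bonus, so $F$ will be \culf. I do not anticipate any substantial obstacle; the only real content is the pullback cancellation above, and this rests on the structural fact that in any decomposition space every inner face map is itself a pullback of $d_1$.
\end{proof*}
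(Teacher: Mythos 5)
Your proposal is correct and follows essentially the same route as the paper: the paper's proof likewise notes (via Lemma~\ref{lem:s0d1}) that all active face maps of $X$ and $Y$ are pullbacks of $d_1$, deduces ULF by "a basic pullback argument" --- which is exactly the cancellation $Y_{n-1}\times_{X_{n-1}}X_n\simeq Y_{n-1}\times_{X_1}X_2\simeq Y_{n-1}\times_{Y_1}Y_2\simeq Y_n$ you spell out --- and then invokes Proposition~\ref{prop:ULF=>cons} for conservativity. Your write-up merely makes the pullback cancellation explicit where the paper leaves it implicit.
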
%
\begin{proof}
    Suppose $X$, $Y$ are decomposition spaces.
    By Lemma~\ref{lem:s0d1} all active face maps in $X$, $Y$ are pullbacks of $d_1:X_2\to X_1$, $d_1:Y_2\to Y_1$.
    If $F:Y\to X$ is cartesian on the active map $[1]\to[2]$ it therefore follows by a basic pullback argument that it is cartesian on all active maps of $\unDelta$. 
The map $F$ is thus ULF, and hence is \culf by Proposition~\ref{prop:ULF=>cons}.
\end{proof}

\begin{blanko}{Remark.}
  The notion of \culf
  can be seen as an abstraction of coalgebra homomorphism, 
  cf.~\ref{lem:coalg-homo} below: `conservative' corresponds to counit preservation,
  `ULF' corresponds to comultiplicativity.
  
  In the special case where $X$ and $Y$ are fat nerves of $1$-categories, 
  then the condition that the square
  $$
  \xymatrix{
  Y_0 \ar[d] \ar[r] \drpullback & Y_1 \ar[d] \\
  X_0\ar[r] & X_1}$$
  be a pullback is precisely the classical notion of conservative functor
  (i.e.~if $f(a)$ is invertible then already $a$ is invertible).
 
  Similarly, the condition that the square
  $$\xymatrix{
  Y_1 \ar[d] & \ar[l] \dlpullback Y_2 \ar[d] \\
  X_1 & \ar[l] X_2}
  $$
  be a pullback is an up-to-isomorphism version of the classical notion of ULF
  functor, implicit already in Content--Lemay--Leroux~\cite{Content-Lemay-Leroux},
  and perhaps made explicit first by Lawvere~\cite{Lawvere:statecats};
  it is equivalent to the notion of discrete Conduch\'e 
  fibration~\cite{Johnstone:Conduche'}. See
  Street~\cite{Street:categorical-structures} for the $2$-categorical notion.
  In the case of the \M categories of Leroux, where there are no
  invertible arrows around, the two notions of ULF coincide.
\end{blanko}

\begin{blanko}{Example.}\label{ex:SIOI}
  Here is an example of a functor which is not \culf in Lawvere's sense
  (is not \culf on classical nerves), but which is \culf
  in the homotopical sense, on fat nerves.  Namely, let $\kat{OI}$ denote the category of 
  finite ordered sets and monotone injections.  Let $\kat{I}$ denote the category of 
  finite sets and injections.  The forgetful functor $\kat{OI} \to \kat{I}$
  is not \culf in the classical sense, because the identity monotone map
  $\un 2 \to \un 2$ admits a factorisation in $\kat{I}$ that does not lift
  to $\kat{OI}$, namely the factorisation into two nontrivial transpositions.
  However, it is \culf in our sense, as can easily be verified by checking
  that the square
  $$\xymatrix{
     \kat{OI}_1 \ar[d] & \ar[l]\dlpullback\kat{OI}_2 \ar[d] \\
     \kat{I}_1  & \ar[l]\kat{I}_2
  }$$
  is a pullback of groupoids, by computing the fibres of the horizontal maps
  over a given monotone injection.
\end{blanko}

\begin{lemma}\label{cULF/decomp}
  If $X$ is a decomposition space and $F: Y \to X$ is \culf
  then also $Y$ is a decomposition space.
\end{lemma}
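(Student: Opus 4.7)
The plan is to show that $Y$ preserves active-inert pushouts by applying the pasting lemma for pullbacks (Lemma~\ref{pbk}) to the cube produced from the natural transformation $F : Y \to X$. Fix an active-inert pushout in $\simplexcategory$
$$\vcenter{\xymatrix{{}
   [p] &  [m]\ar@{->|}[l]_{g'}  \\
   [q]\ar@{ >->}[u]^-{f'}   & \ar@{->|}[l]^{g} [n] \ar@{ >->}[u]_-{f}
  }}$$
Applying $F$ produces a commutative cube whose back face is the image of the square under $X$, whose front face is its image under $Y$, and whose four side faces are the naturality squares of $F$ with respect to $g'$, $g$, $f'$, $f$. I would record two facts: (i)~the back face is a pullback because $X$ is a decomposition space, and (ii)~the two side faces corresponding to the active maps $g$ and $g'$ are pullbacks because $F$ is \culf (cartesian on active maps by Lemma~\ref{lem:simp-culf}). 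The remaining side faces, corresponding to the inert maps $f$ and $f'$, are not pullbacks in general, so they cannot be used directly.

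The key step is a double application of Lemma~\ref{pbk} to a $3 \times 2$ grid that can be read in two different ways. First, stack the naturality square for $g'$ on top of the back face, obtaining rows $(Y_p,Y_m)$, $(X_p,X_m)$, $(X_q,X_n)$ with vertical maps $F$ and $f'^*, f^*$; both the top and bottom squares are pullbacks, so the outer rectangle with corners $Y_p, Y_m, X_q, X_n$ is a pullback. Second, observe that the same outer rectangle arises by stacking the front face (which we want to show is a pullback) on top of the naturality square for $g$, with rows $(Y_p,Y_m)$, $(Y_q,Y_n)$, $(X_q,X_n)$. Here the bottom square is a pullback by (ii) and the outer rectangle is a pullback by the previous step, so Lemma~\ref{pbk} forces the top square --- the $Y$-image of the original active-inert pushout --- to be a pullback, as required.

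The main obstacle is the diagrammatic bookkeeping required to recognise that the two $3 \times 2$ arrangements of the cube share the same outer rectangle; this is just the commutativity of the cube, i.e.\ the naturality squares $F\circ f'^* \simeq f'^* \circ F$ and $F\circ g^* \simeq g^* \circ F$ together with the commutativity of the back face. Once this identification is made, the argument is purely formal and does not require any further hypothesis on $Y$. In particular it does not use conservativity of $F$ beyond what is subsumed by \culfness, which is consistent with the fact that the full strength of the \culf condition on active maps is what translates the decomposition-space property from $X$ to $Y$.
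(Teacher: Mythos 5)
Your proof is correct; the paper in fact states Lemma~\ref{cULF/decomp} without proof, and your cube argument --- pasting the cartesian naturality squares of $F$ over the two active maps against the pullback image of the square under $X$, and then cancelling along the shared outer rectangle --- is exactly the standard argument the authors use in analogous places (compare the prism in the proof of Proposition~\ref{prop:ULF=>cons}). The only cosmetic remark is that your first pasting step invokes the composition direction of the pullback pasting lemma (two stacked pullbacks compose to a pullback), which is standard and used implicitly throughout the paper but is not literally the cancellation statement recorded as Lemma~\ref{pbk}.
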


\begin{blanko}{Decalage.}\label{Dec}
  (See Illusie~\cite[VI.1]{Illusie2}).
  Given a simplicial space $X$
  (as in the top row of the following diagram)
  the {\em lower dec} $\Decbot{X}$ is a new simplicial
  space (the bottom row of the diagram) obtained by deleting $X_0$ and shifting
  everything one place down, deleting also all $d_0$ face maps and all $s_0$
  degeneracy maps.  It comes equipped with a simplicial map, which we
  call the {\em dec map}, $d_\bot:\Decbot{X}\to X$ given by the original
  $d_0$:
$$
\xymatrix@C+1em{
X&&X_0  
\ar[r]|(0.55){s_0} 
&
\ar[l]<+2mm>^{d_0}\ar[l]<-2mm>_{d_1} 
X_1  
\ar[r]<-2mm>|(0.6){s_0}\ar[r]<+2mm>|(0.6){s_1}  
&
\ar[l]<+4mm>^(0.6){d_0}\ar[l]|(0.6){d_1}\ar[l]<-4mm>_(0.6){d_2}
X_2 
\ar[r]<-4mm>|(0.6){s_0}\ar[r]|(0.6){s_1}\ar[r]<+4mm>|(0.6){s_2}  
&
\ar[l]<+6mm>^(0.6){d_0}\ar[l]<+2mm>|(0.6){d_1}\ar[l]<-2mm>|(0.6){d_2}\ar[l]<-6mm>_(0.6){d_3}
X_3 
\ar@{}|\cdots[r]
&
\\
\\
\Decbot{X}\ar[uu]_{d_\bot}&&X_1  \ar[uu]_{d_0}
\ar[r]|(0.55){s_1} 
&
\ar[l]<+2mm>^{d_1}\ar[l]<-2mm>_{d_2} 
X_2  \ar[uu]_{d_0}
\ar[r]<-2mm>|(0.6){s_1}\ar[r]<+2mm>|(0.6){s_2}  
&
\ar[l]<+4mm>^(0.6){d_1}\ar[l]|(0.6){d_2}\ar[l]<-4mm>_(0.6){d_3}
X_3 \ar[uu]_{d_0}
\ar[r]<-4mm>|(0.6){s_1}\ar[r]|(0.6){s_2}\ar[r]<+4mm>|(0.6){s_3}  
&
\ar[l]<+6mm>^(0.6){d_1}\ar[l]<+2mm>|(0.6){d_2}\ar[l]<-2mm>|(0.6){d_3}\ar[l]<-6mm>_(0.6){d_4}
X_4 \ar[uu]_{d_0}
\ar@{}|\cdots[r]
&
}
$$
In fact $\Decbot{}$ is a comonad on simplicial spaces, with the dec map
$d_\bot$ as its counit.

Similarly the \emph{upper dec} $\Dectop{X}$ is obtained by instead 
deleting, in each degree, the last face map $d_\top$ and the last degeneracy map 
$s_\top$. The deleted last face map becomes the \emph{dec map} 
$d_\top \colon \Dectop{X} \to X$.
\end{blanko}

\begin{blanko}{Slice interpretation.}\label{names-and-slices}
  If $X=N\CC$ is the strict nerve of a category $\CC$
  then there is a close relationship between the upper dec and the slice 
  construction: $\Dectop{X}$ is the disjoint
  union of all (the nerves of) the slice categories of $\CC$:
  $$
  \Dectop{X} = \sum_{x\in X_0} N(\CC_{/x}).
  $$
  (In general it is a homotopy sum.)
  
  Any individual slice category
  can be extracted from the upper dec, by exploiting that the upper dec
  comes with a canonical augmentation given by (iterating) the bottom face map.
  The slices are the fibres of this augmentation:
  $$
  \xymatrix{
    N\CC_{/x}\rto\drpullback\dto&\Dectop{X}\dto^{d_\bot}\\1\rto_{\name x}&X_0\,.
  }
  $$
  
  There is a similar relationship between the lower dec and the coslices.
\end{blanko}

\begin{prop}\label{Dec=Segal+cULF}
If $X$ is a decomposition space then 
$\Dectop{X}$ and $\Decbot{X}$ 
are Segal spaces, and the dec maps 
$d_\top : \Dectop{X} \to X$ and  $d_\bot : \Decbot{X} \to X$ 
are \culf.
\end{prop}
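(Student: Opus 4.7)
The plan is to reduce both assertions --- that $\Dec_\bot(X)$ and $\Dec_\top(X)$ are Segal, and that the dec maps are \culf --- to instances of the decomposition-space axiom on $X$, by exhibiting the relevant naturality and Segal squares as images of active-inert pushouts in $\simplexcategory$. I would treat the lower case in detail; the upper case is entirely symmetric under the swap $\bot\leftrightarrow\top$. To see that $d_\bot\colon\Dec_\bot(X)\to X$ is \culf, I invoke Lemma~\ref{lem:simp-culf} to reduce to showing that $d_\bot$ is cartesian on every active map $\alpha\colon[m]\to[n]$. Since $\Dec_\bot = b\upperstar$ for the add-bottom monad $b$, and $d_\bot$ is induced by its unit $d^\bot\colon\Id\Rightarrow b$, the naturality square of $d_\bot$ at $\alpha$ is the image under $X$ of
\[
\xymatrix{
{}[m] \ar@{->|}[r]^-{\alpha} \ar@{ >->}[d]_-{d^\bot} & [n] \ar@{ >->}[d]^-{d^\bot} \\
{}[m+1] \ar@{->|}[r]_-{b(\alpha)} & [n+1]\,.
}
\]
Writing $[k+1]=[1]\intconcat[k]$, this square is precisely the identity extension of the active map $\alpha$ by the inert ``add-bottom'' map $(-)\rat[1]\intconcat(-)$, hence an active-inert pushout by Lemma~\ref{genfreepushout}. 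The decomposition-space axiom therefore sends it to a pullback, as required.

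For the Segal condition on $\Dec_\bot(X)$, I would use the pullback characterisation in Lemma~\ref{segalpqr}(3). Under the identification $(\Dec_\bot X)_k=X_{k+1}$, with $d_\bot$ in $\Dec_\bot(X)$ corresponding to $d_1^X$ and $d_\top$ at level $k$ corresponding to $d_{k+1}^X$, the required pullback squares become, for each $n\geq 1$,
\[
\xymatrix{
X_{n+2} \ar[r]^-{d_1} \ar[d]_-{d_{n+2}} & X_{n+1} \ar[d]^-{d_{n+1}} \\
X_{n+1} \ar[r]_-{d_1} & X_n\,.
}
\]
This is the image under $X$ of
\[
\xymatrix{
{}[n] \ar@{->|}[r]^-{d^1} \ar@{ >->}[d]_-{d^{n+1}} & [n+1] \ar@{ >->}[d]^-{d^{n+2}} \\
{}[n+1] \ar@{->|}[r]_-{d^1} & [n+2]\,,
}
\]
which is the identity extension of the active inner coface $d^1\colon[n]\to[n+1]$ by the inert ``add-top'' inclusion $(-)\rat(-)\intconcat[1]$, hence again an active-inert pushout. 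The axiom delivers the pullback, so $\Dec_\bot(X)$ is Segal.

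The case of $\Dec_\top(X)$ with dec map $d_\top$ is strictly dual: one replaces ``add-bottom'' by ``add-top'' throughout, and the active inner coface $d^1\colon[n]\to[n+1]$ by $d^n\colon[n]\to[n+1]$ near the top. The only place requiring any care is the bookkeeping of which face maps in the decalages come from which face maps in $X$ --- in particular that the newly exposed outer face maps $d_\bot^{\Dec_\bot X}$ and $d_\top^{\Dec_\top X}$ are themselves inner (hence active) face maps of $X$. Once this is spelled out, both claims fall straight out of the decomposition-space axiom, and no extra ingredient is needed.
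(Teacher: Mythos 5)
Your proof is correct and follows essentially the same route as the paper's: both the Segal squares of the decalage (via Lemma~\ref{segalpqr}(3)) and the naturality squares of the dec map on active maps are identified as images of active-inert (identity-extension) pushouts in $\simplexcategory$, so the decomposition-space axiom delivers all the required pullbacks. The only differences are cosmetic --- you detail the lower dec where the paper details the upper one, and you are somewhat more explicit than the paper in exhibiting the relevant squares as identity-extension squares via the add-bottom monad.
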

\begin{proof}
    We put $Y=\Dectop{X}$ and check the pullback 
    condition \ref{segalpqr} (3),
        $$
\vcenter{\xymatrix{
   Y_{n+1}\dto_{{d_\top}}\drpullback 
\rto^-{d_\bot}
 &  Y_n\dto^{d_\top} \\
   Y_{n}\rto_-{d_\bot}  &  Y_{n-1}
  .}}
$$
This is the same as
    $$
\vcenter{\xymatrix{
   X_{n+2}\dto_{{d_{\top-1}}}\drpullback 
\rto^-{d_\bot}
 &  X_{n+1}\dto^{d_{\top-1}} \\
   X_{n+1}\rto_-{d_\bot}  &  X_{n} ,
  }}
$$
and since here the vertically drawn maps (which with respect to $Y$ are 
outer face maps) are inner face maps in $X$, this pullback square is one
of the decomposition-space axioms.
The \culf conditions say that the various $d_\top$ form pullbacks with all
active maps in $X$.  But this follows from the decomposition-space axiom for 
$X$.
\end{proof}

\begin{thm}\label{thm:decomp-dec-segal}
For a simplicial space $X: \simplexcategory\op\to\Grpd$, the following are equivalent
\begin{enumerate}
\item $X$ is a decomposition space
\item  both $\Dectop{X}$ and $\Decbot{X}$ are 
    Segal spaces, and the respective dec maps back to $X$ are \culf. 
\item  both $\Dectop{X}$ and $\Decbot{X}$ are 
    Segal spaces, and the respective dec maps back to $X$ are conservative.
\item  both $\Dectop{X}$ and $\Decbot{X}$ are Segal spaces, and 
the following squares are pullbacks:
$$  
\xymatrix{
   X_1\rto^{s_1}\drpullback 
\dto_-{d_\bot}
 & X_2\dto^{d_\bot} \\
  X_0  \rto_-{s_0}  &  X_1,
}\qquad
\xymatrix{
   X_1\dto_{d_\top}\drpullback 
\rto^-{s_0}
 &  X_2 \dto^{d_\top} \\
X_0  \rto_-{s_0}  &  X_1.
  }
$$
\end{enumerate}
\end{thm}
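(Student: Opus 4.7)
The plan is to verify the chain $(1)\!\Rightarrow\!(2)\!\Rightarrow\!(3)\!\Rightarrow\!(4)\!\Rightarrow\!(1)$. The implication $(1)\!\Rightarrow\!(2)$ is Proposition~\ref{Dec=Segal+cULF}, and $(2)\!\Rightarrow\!(3)$ is immediate since \culf includes conservative. For $(3)\!\Rightarrow\!(4)$, I would observe that the two squares of~(4) are precisely the naturality squares of the dec maps with respect to $s_0$ in the lowest degree: the first uses $s_0^{\Dec_\bot}=s_1^X$ and dec map $d_\bot = d_0^X$; the second uses $s_0^{\Dec_\top}=s_0^X$ and dec map $d_\top$, which at level~$n$ is $d_{n+1}^X$. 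Conservatism of the dec maps then makes both into pullbacks.

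The main step is $(4)\!\Rightarrow\!(1)$. By Proposition~\ref{onlyfourdiags} combined with Lemma~\ref{lem:fewerdiagrams}(2), it suffices to verify the two degeneracy squares of Proposition~\ref{onlyfourdiags} (which are precisely the two supplied by~(4)) together with, for each $n\ge 2$, one $d_\bot$-type and one $d_\top$-type inner-face pullback for any chosen $0<i<n$. For the $d_\top$-type I would take $i=1$: unwinding the identifications $(\Dec_\bot X)_m = X_{m+1}$, $d_\bot^{\Dec_\bot}=d_1^X$, and $d_\top^{\Dec_\bot,m}=d_{m+1}^X$, the required square
$$\xymatrix{X_{n+1}\ar[d]_{d_{n+1}}\ar[r]^{d_1} & X_n\ar[d]^{d_n}\\ X_n\ar[r]_{d_1} & X_{n-1}}$$
is exactly the Segal square of Lemma~\ref{segalpqr}(3) for $\Dec_\bot(X)$, hence a pullback by hypothesis.

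For the $d_\bot$-type I would take $i=n-1$, so the required square is
$$\xymatrix{X_{n+1}\ar[d]_{d_0}\ar[r]^{d_n} & X_n\ar[d]^{d_0}\\ X_n\ar[r]_{d_{n-1}} & X_{n-1}.}$$
Using $d_\bot^{\Dec_\top}=d_0^X$ and $d_\top^{\Dec_\top,m}=d_m^X$, the Segal square of Lemma~\ref{segalpqr}(3) for $\Dec_\top(X)$ reads
$$\xymatrix{X_{n+1}\ar[d]_{d_n}\ar[r]^{d_0} & X_n\ar[d]^{d_{n-1}}\\ X_n\ar[r]_{d_0} & X_{n-1},}$$
which is the reflection along the diagonal of the required square. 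Since the pullback of a cospan is symmetric in its two legs, a commutative square is a pullback if and only if its diagonal reflection is; hence the required square too is a pullback. This verifies all four conditions of Proposition~\ref{onlyfourdiags}, so $X$ is a decomposition space. The conceptual point to emphasise is that Segal on the two dec spaces controls precisely the two families of inner-face pullbacks of $X$, while the two extra squares in~(4) supply the two degeneracy cases that neither dec can see; the main obstacle is recognising that the $d_\bot$-type pullback appears only as a diagonal reflection of the $\Dec_\top$-Segal square, but this transposition is harmless.
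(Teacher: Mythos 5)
Your proposal is correct and follows the same route as the paper: $(1)\Rightarrow(2)$ via Proposition~\ref{Dec=Segal+cULF}, $(2)\Rightarrow(3)\Rightarrow(4)$ as specialisations, and $(4)\Rightarrow(1)$ via Proposition~\ref{onlyfourdiags}. The paper leaves the last step as a one-line citation, whereas you correctly fill in the details --- identifying the Segal squares of $\Dec_\bot(X)$ and $\Dec_\top(X)$ with the inner-face pullbacks for $i=1$ and $i=n-1$ (the latter up to a harmless transposition) --- which is exactly the intended deduction.
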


\begin{proof} 
The implication (1) $\Rightarrow$ (2) is just the preceding Proposition, and the implications (2) $\Rightarrow$ (3) $\Rightarrow$ (4) are specialisations.
The implication (4) $\Rightarrow$ (1) follows from Proposition~\ref{onlyfourdiags}.
\end{proof}

\begin{blanko}{Remark.}
  Dyckerhoff and Kapranov~\cite{Dyckerhoff-Kapranov:1212.3563} (Theorem~6.3.2)
  obtain the result that a simplicial space is $2$-Segal (i.e.~a decomposition
  space except that there are no conditions imposed on degeneracy maps)
  if and only if both $\operatorname{Dec}$s are Segal spaces.
\end{blanko}

\begin{blanko}{Right and left fibrations.}\label{rightfibSegal}
  A simplicial map $F: Y \to X$ is called a {\em right
  fibration} if it is cartesian on all bottom face maps $d_\bot$.  This implies
  that it is also cartesian on all active maps (i.e.~is \culf), as follows from
  an easy argument with the basic pullback Lemma~\ref{pbk}.  The terminology is
  motivated by the case where $Y$ and $X$ are Segal spaces, in which case it
  corresponds to standard usage in the theory of $\infty$-categories.
  If $X$ and $Y$ are fat nerves of $1$-categories, then `right fibration' corresponds to groupoid fibration in the sense of Street~\cite{Street:fibsinbicats}. 
  Similarly, $F$ is called a {\em left fibration} if it is cartesian on $d_\top$
  (and consequently on all active maps also).
\end{blanko}

\begin{prop}\label{DecULF-is-right}
  If $F:Y \to X$ is a \culf functor then $\Decbot{(F)} : \Decbot{Y} \to
  \Decbot{X}$ is a right fibration.  Similarly, $\Dectop{(F)} : \Dectop{Y}
  \to \Dectop{X}$ is a left fibration.
\end{prop}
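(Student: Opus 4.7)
The plan is to unpack the definition of $\Dec_\bot$ and observe that its bottom face maps are inner face maps of the original simplicial space, so that the hypothesis that $F$ is \culf directly delivers the required cartesian squares.

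First I would recall from \ref{Dec} that $\Dec_\bot(X)_n = X_{n+1}$ and that, at level $n \geq 1$, the bottom face map $d_\bot$ of $\Dec_\bot(X)$ is the face map $d_1: X_{n+1} \to X_n$ of $X$. The same holds for $Y$, and the component of $\Dec_\bot(F)$ at level $n$ is simply $F_{n+1}$. Hence the naturality square of $\Dec_\bot(F)$ on its bottom face coincides with the naturality square of $F$ on $d_1: X_{n+1} \to X_n$.

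Next I would observe that the coface map $d^1: [n] \to [n+1]$ is inner whenever $n \geq 1$, since $0 < 1 < n+1$. Because $F$ is \culf, Lemma~\ref{lem:simp-culf} ensures it is cartesian on every inner face map, and in particular on $d_1: X_{n+1} \to X_n$ for each $n \geq 1$. Thus every bottom-face naturality square of $\Dec_\bot(F)$ is a pullback, and $\Dec_\bot(F)$ is a right fibration. The claim about $\Dec_\top(F)$ is dual: at level $n$ the top face $d_\top$ of $\Dec_\top(X)$ is $d_n: X_{n+1} \to X_n$, corresponding to the coface $d^n: [n] \to [n+1]$, which is inner for $n \geq 1$ since $0 < n < n+1$; again the \culf property of $F$ yields cartesianness.

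There is no substantial obstacle here: the whole content of the statement is that discarding the single face map $d_0$ (respectively $d_\top$) turns the remaining extremal face of the dec into an inner face of the original $X$, which is covered by the \culf hypothesis. The only thing to verify is the bookkeeping above.
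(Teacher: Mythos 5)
Your proof is correct and follows essentially the same route as the paper's: the paper likewise observes that the bottom face map of $\Dec_\bot$ ``was originally a $d_1$, and in particular was active,'' so the \culf hypothesis gives the required cartesian squares, and dually for $\Dec_\top$. The only cosmetic difference is that the paper also remarks that $\Dec_\bot(F)$ is again \culf, which is not needed for the definition of right fibration as stated in \ref{rightfibSegal}, so your streamlined bookkeeping suffices.
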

\begin{proof}
  It is clear that if $F$ is \culf then so is $\Decbot{(F)}$.  The further claim
  is that $\Decbot{(F)}$ is also cartesian on $d_0$.  But $d_0$ was originally
  a $d_1$, and in particular was active, hence  $\Decbot{(F)}$ is cartesian on this map.
\end{proof}
 
\section{The incidence coalgebra of a decomposition space}
\label{sec:COALG}

We now turn to the incidence coalgebra (with $\infty$-groupoid coefficients)
associated to any decomposition space, and explain
the origin of the decomposition-space axioms.

The \emph{incidence coalgebra} of a decomposition space $X$ will be a
comonoid object in the symmetric monoidal $\infty$-category $\LIN$,
and the underlying object is $\Grpd_{/X_1}$.
Since $\Grpd_{/X_1} \tensor \Grpd_{/X_1} = \Grpd_{/X_1\times X_1}$,
and since linear functors are given by spans, to define a comultiplication
functor is to give a span
$$
X_1 \leftarrow M \to X_1 \times X_1\,.
$$

For any simplicial space $X$, we can consider the following
structure maps on $\Grpd_{/X_1}$.
\begin{blanko}{Comultiplication and counit.}\label{comult}
  The span
  $$
  \xymatrix{
   X_1  & \ar[l]_{d_1}  X_2\ar[r]^-{(d_2,d_0)} &  X_1\times X_1 
  }
  $$
  defines a linear functor, the {\em comultiplication}
  \begin{eqnarray*}
    \Delta : \Grpd_{/ X_1} & \longrightarrow & 
    \Grpd_{/( X_1\times X_1)}  \\
    (A\stackrel a\to X_1) & \longmapsto & (d_2,d_0){} \lowershriek \circ {d_1} \upperstar(a) .
  \end{eqnarray*}
Likewise, the span
  $$
  \xymatrix{
  X_1  & \ar[l]_{s_0}  X_0\ar[r]^{t} &  1 
  }
  $$
  defines a linear functor, the {\em counit}
  \begin{eqnarray*}
    \varepsilon : \Grpd_{/ X_1} & \longrightarrow & 
    \Grpd  \\
        (A\stackrel a\to X_1) & \longmapsto & t {}\lowershriek \circ {s_0} \upperstar(a) .
  \end{eqnarray*}

If $X$ is the nerve of a category (for example, a poset) then $X_2$ is the set 
of all composable pairs of arrows. The comultiplication is just the 
formula~\eqref{Delta-f} from the introduction
$$
\Delta(f)\:=\sum_{b\circ a=f} a\otimes b,
$$
and the counit is the classical counit, sending identity arrows to $1$ and 
other arrows to $0$.
\end{blanko}

  \begin{blanko}{Running example: the   Hopf algebra of rooted trees.}\label{ex:trees-coalg}
Recall from     Example~\ref{ex:trees-decomp} that $\dstrees$ is the decomposition space in which $\dstrees_1$ is the groupoid of
  rooted forests and $\dstrees_2$ is the groupoid of rooted forests with an admissible cut.
  Taking pullback along $d_1:\dstrees_2 \to \dstrees_1$ is to consider all possible
  admissible cuts $c$ of a given forest, and taking lowershriek along
  $(d_2,d_0): \dstrees_2 \to \dstrees_1 \times \dstrees_1$ is to return the forests $P_c$ and $R_c$ found in the two 
  layers on either side of the cut. We thus have a comultiplication functor
  \begin{align*}\Grpd_{/\dstrees_1} &\longrightarrow \Grpd_{/\dstrees_1} \otimes \Grpd_{/\dstrees_1}
    \\
    (\name T:1\to \dstrees_1)&\longmapsto (\name{P_c}:1\to \dstrees_1)\otimes (\name{R_c}:1\to \dstrees_1)\end{align*}
  which is an objective version of the Butcher--Connes-Kreimer comultiplication,
  cf.~\ref{ex:preintro}\eqref{eq:preintro}.
\end{blanko}

\begin{blanko}{Coassociativity of the comultiplication.}\label{coasscircle1}
The desired coassociativity diagram (which should commute up to equivalence)
$$
\xymatrix@!C=25ex@R-1.5ex{
\Grpd_{/X_1}\dto_\Delta\rto^-\Delta&\Grpd_{/X_1\times X_1}\dto^{\Delta\otimes \id}\\
\Grpd_{/X_1\times X_1}\rto_-{\id\otimes\Delta}&\Grpd_{/X_1\times X_1\times X_1}}
$$
is a diagram of linear functors defined by the spans in the \emph{outline} of the following diagram.
$$\xymatrix@C+6ex@R+0ex{
X_1                  & X_2 \ar[l]_{d_1}\ar[r]^{(d_2,d_0)}   & X_1 \times X_1 \\
X_2 \ar[u]^{d_1} \ar[d]_{(d_2,d_0)}&
X_3
   \ar@{}[ur]|*+[o][F-]{1}
   \ar@{}[dl]|*+[o][F-]{2}
\dlpullback\urpullback\ar[l]_{d_2}\ar[u]^{d_1}\ar[d]^{(d_2^2,d_0)}\ar[r]_{(d_3,d_0 d_0)}
          & X_2 \times X_1 \ar[u]_{d_1\times \id}\ar[d]^{(d_2,d_0)\times \id}\\
X_1\times X_1 & X_1 \times X_2 \ar[l]^-{\id\times d_1}
\ar[r]_-{\id\times (d_2,d_0)} &X_1 \times X_1 \times X_1
}
$$
Coassociativity will follow from Beck--Chevalley equivalences if the \emph{interior} 
part of the diagram can be established, with pullbacks $\xymatrix{*+[o][F-]{1}}$, $\xymatrix{*+[o][F-]{2}}$
  as indicated.
Now the upper right-hand square $\xymatrix{*+[o][F-]{1}}$, for example, will be a pullback if and only if
its composite with the first projection is a pullback:
$$\xymatrix@C+6ex@R+0ex{
X_2 \ar[r]^-{(d_2,d_0)}   & X_1 \times X_1  \ar[r]^-{\text{pr}_1} & X_1 \\
X_3 
\ar@{}[ur]|*+[o][F-]{1}
\ar[u]^{d_1}\ar[r]_{(d_3,d_0 d_0)}
  & X_2 \times X_1 \urpullback \ar[u]^{d_1\times \id} \ar[r]_-{\text{pr}_1} & X_2 
  \ar[u]_{d_1}
}
$$
But demanding the outer rectangle to be a pullback
is precisely one of the basic decomposition-space axioms.
This argument is the origin of the decomposition-space axioms.

If one is just interested in coassociativity at the level of $\pi_0$ of the incidence coalgebra,
this square and its twin are all that are needed.
This was the case in the work of To\"en~\cite{Toen:0501343} who dealt 
with the case where $X$ is the Waldhausen \Sdot-construction of
a dg category, and in the work of Dyckerhoff and
Kapranov~\cite{Dyckerhoff-Kapranov:1212.3563} for exact $\infty$-categories.
\end{blanko}

\begin{blanko}{Homotopy coherence of coassociativity.}
  For coassociativity of the incidence coalgebra
  at the objective level, higher coherence has to be
  established, which will require the full decomposition-space axioms. To 
  establish coassociativity in a strong homotopy sense we must deal on
  an equal footing with all `reasonable' spans
  \begin{equation}\label{pre-reasonable}
  \prod X_{n_j}\leftarrow \prod X_{m_j}\rightarrow\prod X_{k_i}
  \end{equation}
  which could arise from composites of products of the comultiplication and counit.
  We therefore take a more
  abstract approach, relying on some more simplicial machinery.
\end{blanko}

\section{Decomposition spaces as monoidal functors}
\label{sec:DD}

In this section, in order to establish the homotopy coherent coassociativity of
the incidence coalgebra, we study the twisted arrow category $\DD$ of the
category of finite ordinals, with a certain external tensor product $\oplus$.
In Proposition~\ref{DD-universal} we show that simplicial objects in a cartesian
monoidal category correspond to monoidal functors from $\DD$, which enables us
to characterise decomposition spaces as monoidal functors $X: (\DD,\oplus) \to
(\Grpd,\times)$ satisfying an exactness condition.  The purpose of this
viewpoint is to deal with products of the form $\prod X_{k_i}$, as they appear
in the `reasonable spans' \eqref{pre-reasonable}, to which we return in
\S\ref{sec:proofofcoass}.

\begin{blanko}{The category $\unDelta$ of finite ordinals (the algebraist's Delta).} 
  We denote by $\unDelta$ the category of all finite ordinals (including the
  empty ordinal) and monotone maps.  Clearly $\simplexcategory \subset 
  \unDelta$,
  but this is not the most useful relationship between the two 
  categories, and
  we will use a different notation for the objects of $\unDelta$, 
  given by their cardinality, with an underline:
  $$
  \un n = \{1,2,\ldots,n\} .
  $$
  The category $\unDelta$ is monoidal under ordinal sum
  $$
  \un m + \un n := \un{m{+}n} ,
  $$
  with $\un 0$ as the neutral object.
\end{blanko}

Recall that $\Deltaact$ is the subcategory of $\simplexcategory$ containing only
the active maps, and that it is a monoidal category under amalgamated ordinal
sum $\intconcat$ (cf.~\ref{bla:pm}).

\begin{lemma}\label{lem:Delta-duality}
There is a canonical equivalence of monoidal categories (an isomorphism, if
we consider the usual skeleta of these categories)
\begin{eqnarray*}
(\unDelta, +, \un 0) &\simeq& (\Deltaact\op, \intconcat, [0]) \\
  \un k & \leftrightarrow & [k]
\end{eqnarray*}
\end{lemma}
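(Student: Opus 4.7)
The plan is to exhibit an explicit contravariant functor $D\colon \unDelta \to \Deltaact$, bijective on objects and on hom-sets, and then verify that it is strict monoidal (sending $+$ to $\intconcat$ and $\un 0$ to $[0]$). Since both categories have canonical skeleta indexed by $\N$, this will in fact be an isomorphism on the skeleta.

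First, I would define $D$ on objects by $\un k\mapsto[k]$, which visibly sends the unit $\un 0$ to the unit $[0]$. On morphisms, given a monotone $g\colon \un l \to \un k$, define an active map $D(g)\colon [k]\to[l]$ by
\[
D(g)(i) \;:=\; |\{j\in \un l : g(j)\leq i\}|.
\]
The basic checks are immediate: $D(g)(0)=0$ (since $g(j)\geq 1$), $D(g)(k)=l$ (since $g(j)\leq k$), and $D(g)$ is monotone. An inverse $E$ is given on an active $f\colon[k]\to[l]$ by $E(f)(j) := \min\{i : f(i)\geq j\}$, equivalently the unique $i$ with $f(i{-}1) < j \leq f(i)$; short combinatorial checks, exploiting that $f(0)=0$ and $f(k)=l$, yield $D\circ E=\id$ and $E\circ D=\id$. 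Contravariant functoriality $D(g\circ g')=D(g')\circ D(g)$ follows from monotonicity of $g$: the downset $\{j':g(j')\leq i\}$ equals $\{1,\dots,D(g)(i)\}$, so $|\{j:g(g'(j))\leq i\}|=|\{j:g'(j)\leq D(g)(i)\}|=D(g')(D(g)(i))$.

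For monoidal compatibility, on objects $D(\un m+\un n)=[m+n]=[m]\intconcat[n]$ by definition of $\intconcat$ as the pushout \eqref{pm}. On morphisms, given $g_1\colon\un{l_1}\to\un{k_1}$ and $g_2\colon\un{l_2}\to\un{k_2}$, the count $|\{j\in\un{l_1+l_2}:(g_1+g_2)(j)\leq i\}|$ splits along $j\leq l_1$ versus $j>l_1$, and along $i\leq k_1$ versus $i>k_1$, matching exactly the description of $D(g_1)\intconcat D(g_2)$ extracted from \ref{sect:pm}; the unit and associativity coherences hold on the nose on the skeleta, so there is nothing further to check.

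The hard part is really only notational bookkeeping: the indexing sets $\un k=\{1,\dots,k\}$ and $[k]=\{0,1,\dots,k\}$ are offset by one, and one must carefully distinguish ordinal sum from amalgamated ordinal sum (the latter identifies the endpoints along $[0]$). No conceptual obstacle arises; this is the classical duality between finite ordinals and finite intervals with distinguished endpoints, restricted here to the monoidal subcategory on which it becomes an equivalence.
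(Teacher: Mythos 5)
Your proof is correct, but it takes a more explicit, computational route than the paper's. The paper's proof is a four-line conceptual argument: it sends $\un k$ to $\Hom_{\unDelta}(\un k,\un 2)\simeq[k]$ and $[k]$ to $\Hom_{\Deltaact}([k],[1])\simeq\un k$, with functoriality in both directions given by precomposition, so the duality is exhibited as homming into dualizing objects and the verification that the two constructions are mutually inverse (and monoidal) is left implicit. Your functor $D(g)(i)=|\{j:g(j)\leq i\}|$ is precisely what the paper's precomposition map becomes once one identifies a monotone map $\un k\to\un 2$ with the cardinality of the preimage of $1$, so the two proofs construct the same equivalence; yours unpacks it element by element and explicitly verifies contravariant functoriality, the inverse $E$, and the compatibility with $+$ versus $\intconcat$ (the last of which the paper does not spell out at all, so your write-up is in that respect more complete). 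What the paper's approach buys is brevity and the conceptual explanation of \emph{why} the duality exists; what yours buys is a concrete formula that makes the dots-versus-edges picture following the lemma immediately computable, at the cost of the off-by-one bookkeeping you rightly flag. All the individual checks you list (endpoint preservation, monotonicity, the downset argument for functoriality, the splitting of the counting along $j\leq l_1$ and $i\leq k_1$ for monoidality) are sound.
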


\begin{proof}
  The map from left to right sends $\un k \in \unDelta$ to 
  $$
  \Hom_{\unDelta}(\un k, \un 2) \simeq [k] \in \Deltaact\op .
  $$
  The map in the other direction sends 
  $[k]$ to the ordinal
  $$
  \Hom_{\Deltaact}([k],[1]) \simeq \un k .
  $$
  In both cases, functoriality is given by precomposition.
\end{proof}

In both categories we can picture the objects as a line with some dots.
The dots then represent the elements in $\un k$, while the edges represent
the elements in $[k]$;  a map operates on the dots when considered a
map in $\unDelta$ while it operates on the edges when considered a map in 
$\Deltaact$.
Here is a picture of a certain map $\un 5 \to \un 4$ in
$\unDelta$ and of the corresponding map $[5] \leftarrow [4]$ in
$\Deltaact$.
\begin{center}
\begin{texdraw}
  \arrowheadtype t:V
  
    \arrowheadsize l:6 w:3  

  \move (0 0) 
  \bsegment
  \move (0 -5)
  \lvec (0 85)
  \move (0 8) \Onedot 
  \move (0 24) \Onedot 
  \move (0 40) \Onedot 
  \move (0 56) \Onedot 
  \move (0 72) \Onedot 
  \esegment
  
  \move (30 0)
    \bsegment
    \move (0 3)
  \lvec (0 77)
  \move (0 16) \Onedot 
  \move (0 32) \Onedot 
  \move (0 48) \Onedot 
  \move (0 64) \Onedot 
  \esegment

  \move   (27 8) \avec (3 0)
  \move  (27 72) \avec (3 80) 
    \move   (27 55) \avec (3 49)
    \move   (27 41) \avec (3 47)
    \move   (27 23.7) \avec (3 16.3)

        \arrowheadsize l:4 w:3  

    \linewd 0.9
    \move (0 8) \avec (15 12)\lvec (30 16)
    \move (0 24)\avec (15 28)\lvec (30 32)
    \move (0 40)\avec (15 36)\lvec (30 32)
    \move (0 56)\avec (15 60)\lvec (30 64)
    \move (0 72)\avec (15 68)\lvec (30 64)
\end{texdraw}
\end{center}

\begin{blanko}{A twisted arrow category of $\unDelta$.}
  Consider the category $\DD$ whose objects are the arrows
  $\un n \to \un k$ of $\unDelta$ 
  and whose morphisms $(g,f)$ from $a:\un m \to \un h$ to $b:\un n \to \un
  k$ are commutative squares
  \begin{align}\label{mor-DD}
  \vcenter{\xymatrix{
  \un m \ar[d]_a \ar[r]^g \ar@{}[rd]|{(g,f)}& \un n \ar[d]^b \\
  \un h &  \ar[l]^f \un k .
  }}\end{align}
  That is, $\DD\op$ is the twisted arrow category~\cite{MacLane:categories,BaWi:1985}
  of $\unDelta$. 
\end{blanko}

\begin{blanko}{Factorisation system on $\DD$.}
  There is a canonical factorisation system on $\DD$: any morphism
  \eqref{mor-DD} factors uniquely as
  $$\xymatrix@C=3.2pc{
  \un m \ar[d]_{a=fbg} \ar[r]^= \ar@{}[rd]|\varphi
  &\un m \ar[d]|{bg} \ar[r]^g \ar@{}[rd]|\gamma
  & \un n \ar[d]^b \\
  \un h &  \ar[l]^f \un k&  \ar[l]^= \un k  .
  }$$
  The maps $\varphi=(\id,f):fb\to b$ in the left-hand class of the
  factorisation system are termed \emph{segalic},
  \begin{align}\label{cov-DD}\vcenter{\xymatrix{
  \un m \ar[r]^=  \ar[d]_{fb}\ar@{}[rd]|\varphi
  & \un m \ar[d]^b \\
  \un h & \ar[l]^f \un k.}}
  \end{align}
  The maps $\gamma=(g,\id):bg\to b$ in the right-hand class are termed {\em
  ordinalic} and may be identified with maps in various slice categories
  $\unDelta_{/\un k}$ \begin{align}\label{gen-DD}\vcenter{\xymatrix{ \un m
  \ar[r]^g \ar[d]_{bg}\ar@{}[rd]|\gamma & \un n \ar[d]^{b} \\
  \un k & \ar[l]^= \un k.}}
  \end{align}
\end{blanko}

\begin{blanko}{External sum.}
  Observe that $\unDelta$ is isomorphic to the subcategory of objects with
  target $\un k=\un 1$, termed the {\em connected objects} of $\DD$,
  \begin{align}\label{unD-in-DD}
  \unDelta\xrightarrow{\;\;=\;\;}\unDelta_{/\un1}\xrightarrow{\;\;\subseteq\;\;} \DD.\end{align} 

  The ordinal sum operation in $\unDelta$ induces a monoidal operation in
  $\DD$: the {\em external sum} $(\un n{\to}\un k)\oplus(\un n'{\to}\un k')$
  of objects in $\DD$ is their ordinal sum $\un n+\un n'\to \un k+\un k'$
  as morphisms in $\unDelta$.  The neutral object is $\un 0 \to \un 0$.
  The inclusion functor \eqref{unD-in-DD} is not monoidal, but it is easily
  seen to be oplax monoidal by means of the codiagonal map $\un1+\un1\to
  \un1$.

  Each object $\un m\xrightarrow{\;a\;}\un k$ of $\DD$ is an external sum
  of connected objects,
  \begin{align}\label{unD-DD}
  a\;\;=\;\;a_1\oplus a_2\oplus \dots \oplus a_k\;\;=\;\;\bigoplus_{i\in \un k}
  \left(\un m_i\xrightarrow{\;a_i\;}\un1\right),
  \end{align}
  where $\un m_i$ is (the cardinality of) the fibre of $a$ over $i\in\un k$. 

Any segalic map \eqref{cov-DD} and any ordinalic map \eqref{gen-DD} in $\DD$ may be written uniquely as external sums 
\begin{align}\label{cov-sum}
 \varphi &\;\;= \;\;\varphi_1 \oplus \varphi_2\oplus
\dots \oplus \varphi_h \;\;=\;\;\bigoplus_{j\in \un h}
\left(\vcenter{\xymatrix@R1.4pc{
\un m_j\rto^=\dto
\ar@{}[rd]|{\varphi_j}
&\dto^{b_j} \un m_j\\  \un 1 &\lto \un k_j }}\right)
\\\label{gen-sum}
 \gamma &\;\;= \;\;\gamma_1 \oplus \gamma_2\oplus 
\dots \oplus \gamma_k \;\;=\;\;\bigoplus_{i\in \un k}\left(\un m_i\xrightarrow{\;\gamma_i\;}\un n_i\right)
\end{align}
where each $\gamma_i$ is a map in  $\unDelta_{/\un1}=\unDelta$.
\end{blanko}

In fact $\DD$ is a universal monoidal category in the following sense.
\begin{prop}\label{DD-universal}
For any cartesian category $(\CC,\times,1)$, there is an equivalence
$$
\Fun(\simplexcategory\op,\CC)\;\simeq \;\Fun^\otimes((\DD,\oplus,0),(\CC,\times,1))
$$
between the categories of simplicial objects $X$ in $\CC$ and of
monoidal functors $\overline X:\DD\to\CC$.
The correspondence between $X$ and $\ov X$ is determined by following properties.

(a) The functors $X:\simplexcategory\op\to\CC$ and $\ov X:\DD\to\CC$ agree on the common subcategory $\Deltaact\op\cong\unDelta$,
$$
\xymatrixrowsep{8pt}
\xymatrix@C+1pc{
\ar[dd]_{\cong}
\Deltaact\op\,\ar@{^(->}[r]&\simplexcategory\op\ar[rd]^X\\
&&\CC .\\
\unDelta\;\ar@{^(->}[r]
&\DD\ar[ru]_{\overline X}& }
$$

(b) Let $(\un m\stackrel a\to \un k)=\bigoplus_i (\un m_i\xrightarrow {a_i} \un 1)$ be the external sum decomposition \eqref{unD-DD} of any object of $\DD$, and denote by
$f_i:[m_i]\rat[m_1]\intconcat\dots\intconcat[m_k]=[m]$ the canonical inert map in $\simplexcategory$, for $i\in\un k$. Then
$$\ov X\!\left(\!\vcenter{\xymatrix@R1.1pc@C1.6pc{\un m\rto^=\dto
\ar@{}[rd]|\varphi
&\dto^a \un m\\  \un 1 &\lto \un k }}
\right)\;=\;(X(f_1),\dots,X(f_k))\;:\;
X_m\longrightarrow\prod_{i\in\un k} X_{m_i}
$$
and each $X(f_i)$ is the composite of $\ov X(\varphi)$ with the projection to $X_i$.
\end{prop}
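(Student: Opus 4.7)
The plan is to exhibit the equivalence by explicit constructions in both directions, using the segalic-ordinalic factorisation system on $\DD$ together with the external-sum decomposition \eqref{unD-DD} of every object into connected ones.

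Given a monoidal functor $\ov X:(\DD,\oplus,0)\to(\CC,\times,1)$, I restrict along the inclusion $\unDelta\cong\unDelta_{/\un 1}\hookrightarrow\DD$ of connected objects; under the duality $\unDelta\cong\Deltaact\op$ of Lemma~\ref{lem:Delta-duality} this yields the restriction of a simplicial object to active maps $\Deltaact\op\to\CC$, and condition (a) is satisfied by construction. To extend to all of $\simplexcategory\op$, I need to define $X$ on inert maps. For the canonical inert map $f_i:[m_i]\rat[m]$ coming from the decomposition $[m]=[m_1]\intconcat\cdots\intconcat[m_k]$, consider the segalic map $\varphi:(\un m\to\un 1)\to(\un m\to\un k)$ with $f=!:\un k\to\un 1$; monoidality of $\ov X$ identifies $\ov X(\un m\to\un k)$ with $\prod_i X_{m_i}$, and I define $X(f_i)$ to be the $i$-th projection of $\ov X(\varphi)$, which is condition (b). Simplicial identities follow from functoriality of $\ov X$ applied to commutative squares in $\DD$ combining segalic and ordinalic pieces.

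Conversely, given a simplicial object $X:\simplexcategory\op\to\CC$, I define $\ov X$ on objects by $\ov X(\un m\stackrel a\to\un k):=\prod_{i\in\un k}X_{m_i}$, where $m_i$ is the cardinality of the fibre $a^{-1}(i)$. On an ordinalic map $\gamma$ decomposed as in \eqref{gen-sum} into components $\gamma_j\in\unDelta_{/\un 1}$, I use the identification with $\Deltaact\op$ to apply $X$ factorwise; on a segalic map $\varphi$ decomposed as in \eqref{cov-sum}, I regroup factors according to $f$ and in each block apply the product of inert maps $X(f_i)$ assembled via the universal property of the product. For a general morphism $(g,f)$ of $\DD$ I apply the factorisation $(g,f)=\gamma\circ\varphi$ and take the composite. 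Monoidality with respect to $\oplus$ and $\times$ is immediate because $\oplus$ is by definition concatenation of the $\un k$-indexed families of fibres, and the value of $\ov X$ is the corresponding product.

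To check these constructions are mutually inverse it suffices, by (a) and (b), to observe that the value of a monoidal functor on $\DD$ is fully determined by its restriction to connected objects (via monoidality on objects and on segalic maps) together with its action on the subcategory $\unDelta$ (on ordinalic maps); and conversely, a simplicial object is determined by its restrictions to $\Deltaact\op$ and to the inert maps generated by the $f_i$. The main obstacle is the verification of functoriality of the $X\mapsto\ov X$ direction on general morphisms: one must check that the value defined via $\gamma\circ\varphi$ is independent of intermediate data and that it respects composition of $\DD$-morphisms. This reduces, via the external-sum decomposition, to a check in each connected summand, where it amounts to the compatibility of the active-inert factorisation in $\simplexcategory$ with the simplicial identities of $X$ — a careful but routine bookkeeping of fibres and indices, with no genuine obstruction beyond notational overhead.
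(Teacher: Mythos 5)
Your proposal is correct and follows essentially the same route as the paper's proof: both directions are constructed from properties (a) and (b) using the external-sum decomposition into connected objects and the segalic--ordinalic factorisation system, with the remaining work being exactly the compatibility check you identify (that the two factorisations of a mixed morphism, equivalently the identity-extension squares of Lemma~\ref{lem:genfactsplit}, give the same value under $X$). The paper simply carries out in explicit diagrams the ``routine bookkeeping'' you defer, including the well-definedness of $X(f_r)$ via the factorisation of a segalic map through an intermediate three-fold product.
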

\begin{proof}
Given $\ov X$, property (a) says that there is a unique way to define 
$X$ on objects and active maps.
Conversely, given $X$, 
then for any object $a:\un m\to\un k$ in $\DD$ we have  
$$
\overline X_a\;\;=\;\;
\prod_{i\in\un k}
\overline X_{a_i}\;\;=\;\;\prod_{i\in\un k}X_{m_i}
$$
using \eqref{unD-DD}, and for any ordinalic map $\gamma$ we have
$$
\overline X(\gamma)\;\;=\;\;
\prod_{i\in\un k}
\overline X(\gamma_i)\;\;=\;\;\prod_{i\in\un k}X(g_i)
$$
using \eqref{gen-sum}, where $g_i\in\Deltaact\op$ corresponds to $\gamma_i\in\unDelta$. 

Thus we have a bijection between functors $X$ defined on $\Deltaact\op$ and monoidal functors $\ov X$ defined on the ordinalic subcategory of $\DD$.
Now we consider the inert and segalic maps. Given $\ov X$, 
property (b) says that for any inert map $f_r:[m_r]\to[m]$ we may define 
$$X(f_r)=
\left(X_m\xrightarrow{\overline X(\varphi)}\prod_{i\in\un k} X_{m_i}\onto X_{m_r}\right)$$
We may
assume $k=3$:
given the factorisation
$$
\varphi=\left(\vcenter{\xymatrix@R1.5pc@C1.6pc{
\un m\rto^-=
\dto\ar@{}[rd]|{\varphi_2}
&\dto \un m_{<r}+\un m_r+\un m_{>r}\rto^-=
\dto\ar@{}[rd]|{\varphi_1\oplus\id\oplus \varphi_3}
&\dto\sum_{i\in \un k} m_i
\\  \un 1 &\lto \un 3&\lto\un k }}\right)
$$
one sees the value $X(f_r)$ is well defined from the following diagram 
$$
\xymatrix@C=6pc{X_m\rto^-{\overline X(\varphi_2)}\ar@/_0.9pc/[rrd]_(0.3){X(f_r)}
  &X_{m_{<r}}\times X_{m_r}\times X_{m_{>r}}
   \rto^-{\overline X(\varphi_1)\times\id\times\overline X(\varphi_3)}
   \ar@{->>}[dr]
       &\prod_{i\in\un k} X_{m_i}
        \ar@{->>}[d]
\\ &   &X_{m_r}\;.
}
$$
Functoriality of $X$ on a composite of inert maps, say $[m_3]\rat[\sum_2^4m_i]\rat[\sum_1^5m_i]$,
now follows from the diagram
$$
  \xymatrixrowsep{16pt}
  \xymatrixcolsep{16pt}
\xymatrix {
X_{\sum_1^5m_i}\drto\rrto &&\ar@{->>}[dr] \prod_1^5X_{m_i}\ar@{->>}[rr]&&X_{m_3}\\
&\!\!\!X_{m_1}\times X_{\sum_2^4m_i}\times X_{m_5}\!\!\!
\ar[ur]\ar@{->>}[dr]&&\prod_2^4X_{m_i}\ar@{->>}[ur]\\
&&X_{\sum_2^4m_i}\urto
}
$$
in which the first triangle commutes by functoriality of $\ov X$.

Conversely, given $X$, property (b) says how to define $\ov X$ on segalic maps with connected domain and hence, by \eqref{cov-sum}, on all segalic maps.
Functoriality of $\ov X$ on a composite of segalic maps, say $(\id,\un 1\leftarrow\un h\leftarrow\un k)$, follows from functoriality of $X$:
$$
\xymatrix@C=9.5pc{
X_m
\ar@/_2.5pc/[rr]_-{(X([m_i]\rat[m]))_{i\in\un k}}
\rto^-{(X([m_j]\rat[m]))_{j\in\un h}}
&\displaystyle\prod_{j\in \un h}X_{m_j}
\rto^-{\prod_{j\in\un h}(X([m_i]\rat[m_j]))_{i\in\un k_j}}
&\displaystyle\prod_{j\in \un h}\prod_{i\in \un k_j}X_{m_i}
}
$$
It remains only to check that the construction of $\overline X$ from $X$
(and of $X$ from $\overline X$) is well defined on composites of ordinalic
followed by segalic (inert followed by active) maps.  One then has the
mutually inverse equivalences required.  Consider the factorisations in
$\DD$,
$$
\vcenter{\xymatrix{
\un m \ar[d] \ar[r]^= \ar@{}[rd]|{\varphi}&\un m \ar[d] \ar[r]^g \ar@{}[rd]|{\gamma}& \un n \ar[d] \\
\un 1 &  \ar[l] \un k&  \ar[l]^= \un k
  }}
\quad=\quad
\vcenter{\xymatrix{
\un m \ar[d] \ar[r]^g \ar@{}[rd]|{\gamma'}&\un n \ar[d] \ar[r]^=  \ar@{}[rd]|{\varphi'}&\un n \ar[d] \\
\un 1 &  \ar[l]^= \un 1&  \ar[l] \un k
  .}}
$$
To show that $\overline X$ is well defined,
we must show that the diagrams
$$
\xymatrix@C=5pc{
X_m\rrto^-{\ov X(\varphi)=(X(f_1),\dots,X(f_k))}\dto_{\ov X(\gamma')=X(\tilde g)}&& \prod
X_{m_i}\dto_{\ov X(\gamma')=\prod X(\tilde g_i)}\ar@{->>}[r]&X_{m_r}
\dto^{X(\tilde g_r)}\\
X_{n}\rrto_-{\ov X(\varphi')=(X(f'_1),\dots,X(f'_k))}&&\prod
X_{n_i}\ar@{->>}[r]&X_{n_r},
}
$$
commute for each $r$, where $\tilde g$, $\tilde g_i$ in $\Deltaact$ correspond to $g$, $g_i$ in $\unDelta$. This follows by functoriality of $X$, since $\tilde g$ restricted to $n_r$ is the corestriction of $\tilde g_r$.
Finally we observe that this diagram, with $k=3$ and $r=2$, 
also serves to show that the construction of $X$ from $\ov X$ is well defined on
$$
\xymatrix@C5pc{
{[m_1{+}m_2{+}m_3]} &
 \ar@{ >->}[l]_-{f_2} {[m_2]}\\
{[n_1{+}n_2{+}n_3]} \ar@{>|}[u]^{\tilde g}&\ar@{>|}[u]_{\tilde g_2}
\ar@{ >->}[l]^-{f'_2}{[n_2]}}
$$
\end{proof}

\begin{lemma}\label{lemma:DD-pbk}
  In the category $\DD$, ordinalic and segalic maps admit pullback 
  along each other, and the results are again maps of the same type.
\end{lemma}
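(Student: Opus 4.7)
The plan is to produce an explicit pullback candidate dictated by the factorisation system itself. Given a cospan with segalic $\varphi = (\id_{\un n}, f) \colon fc \to c$ and ordinalic $\gamma = (g, \id_{\un k}) \colon cg \to c$, where $c \colon \un n \to \un k$, $f \colon \un k \to \un h$, and $g \colon \un m \to \un n$, I take the pullback object to be the composite $fcg \colon \un m \to \un h$, equipped with the projections
$$
(g, \id_{\un h}) \colon fcg \to fc \qquad \text{and} \qquad (\id_{\un m}, f) \colon fcg \to cg.
$$
By inspection the first is ordinalic and the second is segalic, so the projection parallel to $\gamma$ is again ordinalic and the projection parallel to $\varphi$ is again segalic, matching the last clause of the lemma.

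Commutativity of the resulting square is immediate, since both composites to $c$ reduce to the single morphism $(g, f) \colon fcg \to c$. For the universal property I unpack a test cone: given $e \colon \un p \to \un q$ together with $\alpha = (g_\alpha, f_\alpha) \colon e \to fc$ and $\beta = (g_\beta, f_\beta) \colon e \to cg$ satisfying $\varphi \alpha = \gamma \beta$, the composition rule in $\DD$ forces the identities $g_\alpha = g g_\beta$ in $\unDelta$ and $f_\alpha f = f_\beta$. The only possible mediator is then $\xi = (g_\beta, f_\alpha) \colon e \to fcg$, and these two compatibility identities simultaneously ensure that $\xi$ is well-defined as a morphism of $\DD$ (namely $f_\alpha \cdot fcg \cdot g_\beta = e$) and that both triangles commute; uniqueness is visible from the fact that the projections already fix each component of the mediator.

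No step here is truly hard: the argument simply exploits the fact that ordinalic maps act only on the domain (upper) component and segalic maps only on the codomain (lower) component of an object $\un n \to \un k$ of $\DD$, so the two classes modify independent data. In that sense the lemma is a direct expression of the principle that an orthogonal factorisation system whose two classes act on complementary parameters automatically admits pullbacks along one another, with the pulled-back arrow being of the same type as the arrow it is a pullback of.
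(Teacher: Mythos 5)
Your proof is correct and follows essentially the same route as the paper: you construct the pullback object as the composite $fcg$ (the paper's $fbg$) with projections obtained by repeating $g$ and $f$ against identity components, exactly as in the paper's cube diagram. The only difference is that you spell out the universal property and uniqueness of the mediator explicitly, where the paper simply asserts that the presence of the four identity maps makes the pullback property clear.
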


\noindent (This is a general fact about opposites of 
twisted arrow categories.)

\begin{proof}
  In the diagram below, the map from $a$ to $b$
  is segalic (given essentially by the bottom map $f$) and the map from
  $a'$ to $b$ is ordinalic (given essentially by the top map $g$):
\begin{equation} \label{eq:pbk-cube}\vcenter{
  \xymatrix@C3.9pc@R1.35pc{
  && \ar@{-->}[lldd]_{g} \un m \ar@{-->}[d] \ar@{-->}[rrdd]^*-[@]=0+!D{=} && \\
  && \un h && \\
  \un n \ar[d]_a \ar[rrdd]  ^*-[@]=0+!D{=} &&&& \un m \ar[lldd]_{g} \ar[d]^{a'} \\
  \un h \ar@{-->}[rruu]|(0.25)\hole^*-[@]=0+!D{=} &&&& \un k \ar@{-->}[lluu]|(0.25)\hole
_f \\
  && \un n \ar[d]_b && \\
  && \ar[lluu]^f \un k \ar[rruu]_*-[@]=0+!U{=} &&
  }}\end{equation}
    To construct the pullback, we are forced to repeat $f$ and $g$, completing 
  the squares with 
  the corresponding identity maps.
  The connecting map in the resulting object is 
$fbg: \un m \to \un h$.
  It is clear from the presence of the four identity maps that this is a 
  pullback.
\end{proof}

\begin{blanko}{Examples.}\label{ex:DD-pbk}
Every segalic-ordinalic pullback is the external sum of
\emph{connected} pullbacks, that is, those segalic-ordinalic pullbacks as above where $h=1$.
A segalic-ordinalic pullback over $b=\id$ is termed a \emph{special} pullback.
Any map $g : \un m \to \un n$ in $\unDelta$ defines canonically a
\emph{connected special} pullback:
  \begin{equation*}
  \vcenter{
  \xymatrix@C3pc@R1.2pc{
  && \ar[lldd]_{g} \un m \ar[d] \ar@{=}[rrdd] && \\
  && \un 1 && \\
  \un n \ar[d] \ar@{=}[rrdd] &&&& \un m \ar[lldd]_{g} \ar[d]^{g} \\
  \un 1 \ar@{=}[rruu]|(0.25)\hole &&&& \un n \ar[lluu]|(0.25)\hole\\
  &&n \ar@{=}[d] && \\
  && \ar[lluu] \un n \ar@{=}[rruu] &&
  }}\end{equation*}
\end{blanko}

We now have the following important characterisation of decomposition
spaces.

  \begin{prop}\label{prop:DDecomp}
  For $X: \simplexcategory\op\to\Grpd$ 
  a simplicial space, the following are equivalent.
  \begin{enumerate}
\item $X$ is a decomposition space.

\item 
  The corresponding monoidal functor $\overline X : \DD \to \Grpd$ preserves
  pullbacks of the kind described in Lemma~\ref{lemma:DD-pbk}.
  
  \item For every active map $g: [n] \genmap [m]$ the following square
  is a pullback
  $$
  \xymatrix{
  X_m \ar[r] \ar[d]_{g^*} & X_{m_1} \times \cdots \times X_{m_n} \ar[d]^{g_1^* \times 
  \cdots \times g_n^*} \\
  X_n \ar[r] & X_1 \times \cdots \times X_1,
  }$$
  where 
  $g = g_1 \vee \dots \vee g_n$ with $g_i:[1]\genmap[m_i]$, and the horizontal maps are induced by
  the  inert maps $[m_i] \rat [m_1]\vee \dots \vee [m_n]=[m]$ and
  $[1] \rat [1]\vee\dots\vee[1]=[n]$.
\end{enumerate}
\end{prop}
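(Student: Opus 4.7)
The plan is to establish (2) $\Leftrightarrow$ (3) using the universal property of $\DD$ from Proposition~\ref{DD-universal}, then establish (1) $\Leftrightarrow$ (3) by a pullback-pasting induction reducing condition (3) to the basic decomposition axiom.

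For (2) $\Leftrightarrow$ (3), I would first translate what the monoidal functor $\overline X$ does to a connected special pullback (Example~\ref{ex:DD-pbk}) associated to a map $g\colon \un m\to \un n$ in $\unDelta$, which corresponds under Lemma~\ref{lem:Delta-duality} to an active map $[n]\genmap[m]$ in $\Deltaact$. By Proposition~\ref{DD-universal}(b), $\overline X$ sends the segalic leg to the Segal-type map $X_m\to \prod X_{m_i}$ and the ordinalic leg to $g^*$ (respectively $\prod g_i^*$ on the right), so $\overline X$ applied to this connected special pullback is precisely the square of (3). This yields (2) $\Rightarrow$ (3) immediately. Conversely, since segalic and ordinalic maps decompose as external sums \eqref{cov-sum}, \eqref{gen-sum}, every segalic-ordinalic pullback in $\DD$ is the external sum of connected ones, and a connected non-special pullback further factors via the external sum expansion of the segalic leg into a composite of connected special pullbacks. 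Because $\overline X$ is monoidal, external sums pass to cartesian products in $\Grpd$, and cartesian products of pullbacks are pullbacks, so preservation of all connected special pullbacks implies preservation of every pullback of Lemma~\ref{lemma:DD-pbk}.

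For (1) $\Leftrightarrow$ (3), I would induct on $n$. The case $n=1$ is trivial since the vertical maps are equalities. For $n\geq 2$, write $g = g'\vee g_n$ with $g'\colon [n-1]\genmap [m_1+\cdots+m_{n-1}]$ and $g_n\colon[1]\genmap[m_n]$, and exhibit the square of (3) as a horizontal paste of two pullbacks: the inductive-hypothesis square for $g'$ sitting on the left, and a basic identity-extension pullback (for $g_n$ extended by $[m_1+\cdots+m_{n-1}]$ on the bottom) sitting on the right. Applying Lemma~\ref{pbk} repeatedly gives the conclusion. Conversely, to deduce (1) from (3), specialize (3) to the active map $\id_{[a]}\vee g\vee \id_{[b]}\colon [a+n+b]\genmap[a+k+b]$ and combine it with (3) applied to $g$ alone; comparing the two squares and using pullback pasting recovers the basic identity-extension pullback.

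The main obstacle is bookkeeping in the induction step: one must carefully identify the intermediate object, say $X_{m_1+\cdots+m_{n-1}}\times X_{m_n}$, and check that the two subsquares paste coherently with respect to the inert decompositions. No new homotopy-coherence issues arise since all pullbacks involved are instances or composites of the basic active-inert pullback condition, and the monoidal reformulation in (2) merely repackages this combinatorics.
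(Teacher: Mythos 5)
Your overall architecture stays close to the paper's: the equivalence $(2)\Leftrightarrow(3)$ is handled there exactly as you begin to describe it (the square in (3) is the $\overline X$-image of a connected special pullback, and monoidality reduces everything to connected pullbacks), while for the remaining equivalence the paper proves $(1)\Leftrightarrow(2)$ directly and only remarks that $(1)\Leftrightarrow(3)$ can also be proved without reference to $\DD$, which is the route you attempt. However, two steps of your sketch do not go through as stated.

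First, in $(3)\Rightarrow(2)$ you assert that a connected non-special pullback ``factors into a composite of connected special pullbacks'' and then conclude using only closure of pullbacks under products and composition. No such factorisation exists: the image of a connected pullback over $b\colon \un n\to\un k$ has $X_{n_1}\times\cdots\times X_{n_k}$ in its lower right corner, and a (product of) special connected square(s) always ends in a power of $X_1$, so the shapes match only when $b=\id$. The correct relation is that the connected pullback over $b$, composed on the segalic side with a product of special connected pullbacks, yields the special connected pullback for $g$ itself; one then concludes with the \emph{cancellation} direction of Lemma~\ref{pbk} (outer and right-hand squares pullbacks imply the left-hand square is), not mere closure under composition. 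Second, your induction step for $(1)\Rightarrow(3)$ does not typecheck: a horizontal paste composing to the square of (3) for $g$ must have $X_m\to X_n$ as the left edge of its left-hand constituent, whereas the inductive square for $g'$ has left edge $X_{m_1+\cdots+m_{n-1}}\to X_{n-1}$, so it cannot ``sit on the left''. The natural repair puts the product of the inductive square with the trivial square for $g_n$ on the \emph{right}, leaving on the left the two-block square with right edge $X_{m_1+\cdots+m_{n-1}}\times X_{m_n}\to X_{n-1}\times X_1$; but that two-block square is not a basic identity-extension pullback --- establishing it is precisely where the axioms enter (project to each factor, write each projected square as a vertical composite of identity-extension squares composed with an always-pullback projection square, and apply Lemma~\ref{pbk} again), and this is the content your sketch omits. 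Both gaps are repairable, but as written the argument is incomplete exactly at the points where Lemma~\ref{pbk} must be used in its cancellation form rather than as closure of pullbacks under pasting.
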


\begin{proof}
  Since $\overline X$ is monoidal, condition (2) is equivalent to the
  condition that the connected pullbacks are preserved.  Now the $\overline
  X$-image of a connected pullback is a diagram
  $$
  \xymatrix{
  X_m \ar[r] \ar[d] & X_{m_1} \times \cdots \times X_{m_k} \ar[d] \\
  X_n \ar[r] & X_{n_1} \times \cdots \times X_{n_k}.
  }$$
We can factor this into a vertical composite of such diagrams 
in which the map on the left is a single face or degeneracy map.
Then the map on the right is a product of 
  maps, one of which, say the $i$th factor, is again a single face 
  or degeneracy map, and the rest are identities.
  To check if each of these new simpler squares are pullbacks we consider
  the projections onto the non-trivial factor:
    $$
  \xymatrix{
  X_m \ar[r] \ar[d] & X_{m_1} \times \cdots \times X_{m_k}
  \ar[d] \ar[r] & 
  X_{m_i} \ar[d]\\
  X_n \ar[r] & X_{n_1} \times \cdots \times X_{n_k} 
  \ar[r] & X_{n_i}  .
  }$$
  But by construction of $\overline X$, 
  the composite horizontal maps are precisely inert maps in the sense of the
  simplicial space $X$, and the vertical maps are precisely active maps 
  in the sense that they are arbitrary maps in $\unDelta$ and hence (in the 
  other direction) active maps in $\simplexcategory$, under the duality
  in Lemma~\ref{lem:Delta-duality}.
  Since the right-hand square is always a pullback, the standard pullback
  argument of Lemma~\ref{pbk} shows that
  the total square is a pullback (i.e.~we have a decomposition space)
  if and only if the left-hand square is a pullback (i.e.~the pullback condition on 
  $\overline X$ is satisfied).  This proves $(1)\Leftrightarrow(2)$.

    The diagram in condition (3) is the image of a connected special
  pullback, as in Example~\ref{ex:DD-pbk}, so $(2)\Rightarrow(3)$.
  Finally we show that $(3)\Rightarrow(2)$. As $\overline X$ is monoidal, (3) is equivalent to preservation of all special pullbacks, just as (2) is equivalent to preservation of just the connected pullbacks. Now any connected pullback (as in the northwest half of the following diagram) can be composed in a canonical way with a special pullback (the southeast half of the diagram) to form 
  a special connected pullback:
  $$
  \xymatrix@C3pc@R1.2pc{
  && \ar[lldd]_{g} \un m \ar[d] \ar@{=}[rrdd] && &&\\
  && \un 1 && &&\\
  \un n \ar[d] \ar@{=}[rrdd] &&&& \un m \ar[lldd]_{g} \ar[d]  \ar@{=}[rrdd]&&\\
  \un 1 \ar@{=}[rruu]|(0.25)\hole &&&& \un k \ar[lluu]|(0.25)\hole  &&\\
  && \un n \ar[d]_b \ar@{=}[rrdd] && && \un m \ar[d]^{g} \ar[lldd]^{g}\\
  && \ar[lluu] \un k \ar@{=}[rruu]|(0.25)\hole && && \un n \ar@{=}[lldd] \ar[lluu]^b|(0.25)\hole\\
  && && \un n \ar@{=}[d] && \\
  && && \un n \ar[lluu]^b 
  }$$
  Hence, by the pullback Lemma~\ref{pbk}, if special pullbacks are preserved
  then so are connected pullbacks.
  Note that 
  $(1)\Leftrightarrow(3)$ can also be proved directly, without reference 
  to $\DD$.
\end{proof}
\begin{blanko}{Example.}
  If $g$ is the bottom degeneracy map $\un 3
  \to \un 2$ in $\unDelta$, corresponding to the active map $d^1: [2] \genmap
  [3]$ in $\Deltaact$, the special connected pullback square in Example~\ref{ex:DD-pbk} is sent to
  $$\xymatrix@C+6ex@R+0ex{
  X_3 \ar[r]^-{(d_3,d_0 d_0)} \ar[d]_{d_1} & X_2 \times X_1 \ar[d]^{d_1\times \id} \\
  X_2 \ar[r]_-{(d_2,d_0)} & X_1 \times X_1 ,
  }$$
  as in item (3) of the proposition.
  This is precisely square $\xymatrix{*+[o][F-]{1}}$ of the
  basic coassociativity argument in \ref{coasscircle1}.
\end{blanko}

\section{Proof of strong homotopy coassociativity of the incidence coalgebra}

\label{sec:proofofcoass}

We proceed to establish that, if $X$ is a decomposition space, then the
comultiplication and counit defined in \ref{comult} make $\Grpd_{/X_1}$
a coassociative and counital coalgebra in a strong homotopy sense. 

We have more
generally, for any $n\geq 0$, the generalised comultiplication maps
\begin{eqnarray}\label{Deltan}
\Delta_n:
\Grpd_{/X_1} & \longrightarrow & \Grpd_{/X_1\times\dots\times X_1}
\end{eqnarray}
defined by the spans
\begin{align}\label{Deltanspan}
X_1 \leftarrow X_n \to X_1 \times\dots\times X_1.
\end{align}
The case $n=0$ is the counit map, $n=1$ gives the identity, and $n=2$ is the
comultiplication $\Delta$ we considered above.  The coassociativity result will
follow from Lemma~\ref{lem:reasonable} and Proposition~\ref{prop:reasonable}
that say that all combinations (composites and tensor products) of these
generalised comultiplication maps are canonically equivalent whenever they have
the same source and target.  For this we exploit the category $\DD$ introduced
in \S\ref{sec:DD}, designed exactly to encode also products of the various
spaces $X_k$.

\begin{blanko}{Reasonable spans and reasonable linear functors.}
  A {\em reasonable span} in $\DD$ is a span $a \stackrel g \leftarrow m 
  \stackrel f\to b$ in 
  which $g$ is ordinalic and $f$ is segalic.  Clearly the external sum
  of two reasonable spans is reasonable, and the composite of two 
  reasonable spans is reasonable (by Lemma~\ref{lemma:DD-pbk}).
  
  Let $X:\simplexcategory\op\to\Grpd$ be a fixed decomposition space,
  and interpret it also as a monoidal functor $\ov X:\DD\to\Grpd$, 
  via Proposition~\ref{DD-universal}.
  A span in $\Grpd$ of the form
  $$
  \ov X_{a} 	\leftarrow \ov X_{m} \to \ov X_{b}
  $$
  is called \emph{reasonable} if it is induced by a reasonable span in $\DD$.
  
  Recall from \ref{bla:LIN} that a functor
  between slices of $\Grpd$ is linear if it is defined by a span {in $\Grpd$}.  
  A linear functor is called \emph{reasonable} if the defining span is reasonable.  
That is, a reasonable linear functor is a functor that is defined by a pullback along an ordinalic map followed by a lowershriek
  along a segalic map.  
\end{blanko}

\begin{lemma}\label{lem:reasonable}
	For $X$ a decomposition space,
  tensor products and composites of reasonable linear functors are 
  again reasonable linear functors.
\end{lemma}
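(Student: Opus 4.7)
The plan is to prove the two closure properties (tensor products and composites) separately, each time first establishing the corresponding fact at the level of spans in $\DD$, and then transferring it to $\Grpd$ via the monoidal functor $\overline{X}$.

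First I would handle tensor products. A tensor product of linear functors between slices of $\Grpd$ is obtained by taking the external product of their defining spans, which in $\Grpd$ means componentwise cartesian product. Since $\overline X$ is monoidal (Proposition~\ref{DD-universal}), sending $\oplus$ in $\DD$ to $\times$ in $\Grpd$, the tensor product of two reasonable linear functors is induced by the external sum $\oplus$ of the corresponding reasonable spans in $\DD$. It was already observed, right after the definition of reasonable span in $\DD$, that ordinalic and segalic maps are each closed under external sum, so the external sum of two reasonable spans is reasonable. Hence the tensor product is a reasonable linear functor.

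Next I would handle composites. Given two reasonable linear functors, defined by reasonable spans $a \stackrel{g}{\leftarrow} m \stackrel{f}{\to} b$ and $b \stackrel{g'}{\leftarrow} m' \stackrel{f'}{\to} c$ in $\DD$, their composite (as linear functors) is computed by the Beck--Chevalley rule, which requires forming the pullback of $f$ and $g'$. By Lemma~\ref{lemma:DD-pbk}, this pullback exists in $\DD$, and the resulting maps are again segalic and ordinalic; so the composite span in $\DD$ is once more reasonable, of the form $a \leftarrow m'' \to c$ with left leg ordinalic and right leg segalic.

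The remaining step, which I expect to be the key point, is to verify that $\overline{X}$ actually sends this pullback in $\DD$ to a pullback in $\Grpd$, for only then does the composite of the two reasonable linear functors agree with the linear functor induced by the composite reasonable span. But this is precisely the content of Proposition~\ref{prop:DDecomp}: because $X$ is a decomposition space, the monoidal functor $\overline X:\DD\to\Grpd$ preserves the segalic-ordinalic pullbacks of Lemma~\ref{lemma:DD-pbk}. Invoking the Beck--Chevalley equivalence \eqref{BC} for the resulting pullback square in $\Grpd$, we conclude that the composite linear functor is defined by the image under $\overline X$ of the composite reasonable span in $\DD$, and is therefore reasonable, completing the proof.
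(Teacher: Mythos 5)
Your proof is correct and follows essentially the same route as the paper: monoidality of $\overline X$ handles tensor products, and Lemma~\ref{lemma:DD-pbk} together with Proposition~\ref{prop:DDecomp} (preservation of segalic--ordinalic pullbacks, feeding into Beck--Chevalley) handles composites. The paper's own proof is just a terser version of exactly this argument.
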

\begin{proof}
  Cartesian products of reasonable spans in $\Grpd$ are again
  reasonable since $\ov X$ is monoidal.  Hence tensor products of
  reasonable linear functors are again reasonable. A composite of 
  reasonable linear functors is induced by
  the composite reasonable span in $\DD$, using
  Proposition~\ref{prop:DDecomp}.  Hence reasonable linear
  functors are closed under composition.
\end{proof}

The interest in these notions is of course that the generalised
comultiplication maps $\Delta_n$ of (\ref{Deltan}) are
reasonable linear functors. In fact they are the `only' reasonable linear functors:
\begin{prop}\label{prop:reasonable}
  Any reasonable linear functor $$\Grpd_{/X_1} \longrightarrow \Grpd_{/X_1\times\dots\times
  X_1}, \quad n\geq0$$ is canonically equivalent to the $n$th 
  comultiplication map $\Delta_n$.
\end{prop}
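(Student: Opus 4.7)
The strategy is to pin down the underlying reasonable span in $\DD$ essentially uniquely, and then to compute its image under $\overline X$ using Proposition~\ref{DD-universal}. First I would fix the endpoints in $\DD$: by Proposition~\ref{DD-universal}, the value of $\overline X$ on any object decomposes as a product $\prod_{i\in\underline k} X_{m_i}$ according to the external sum decomposition \eqref{unD-DD}, so the only objects $a,b\in\DD$ whose $\overline X$-values present the slices $\Grpd_{/X_1}$ and $\Grpd_{/X_1\times\dots\times X_1}$ (as $n$-fold product) are $a=\id_{\underline 1}$ and $b=\id_{\underline n}$. Thus a reasonable linear functor $\Grpd_{/X_1}\to \Grpd_{/X_1^n}$ is presented by a reasonable span $a\leftarrow m\to b$ in $\DD$ with these endpoints.

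Next I would classify such reasonable spans. An ordinalic morphism $m\to a=\id_{\underline 1}$ forces the bottom component of the square \eqref{mor-DD} to be $\id_{\underline 1}$, so the target of $m$ is $\underline 1$, and the top component is the unique map $\underline p\to \underline 1$. Hence $m$ is a connected object $m\colon\underline p\to\underline 1$, and the ordinalic map is forced. A segalic morphism $m\to b=\id_{\underline n}$ forces the top component to be the identity, so the source of $m$ is $\underline n$, and the bottom component is $m$ itself. Combining, $p=n$ and $m$ is the unique map $\underline n\to \underline 1$; the two constituent maps of the reasonable span are then canonically determined. Hence there is, up to canonical identification, exactly one reasonable span from $\id_{\underline 1}$ to $\id_{\underline n}$ in $\DD$.

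Finally I would apply $\overline X$ to this unique span and identify the result with \eqref{Deltanspan}. On the object $m\colon \underline n\to \underline 1$ the duality $\underline n\leftrightarrow [n]$ (Lemma~\ref{lem:Delta-duality}) yields $\overline X_m=X_n$. On the ordinalic map, property~(a) of Proposition~\ref{DD-universal} gives the map $X_n\to X_1$ induced by the unique active arrow $[1]\genmap[n]$ in $\Deltaact$, which is the left leg of \eqref{Deltanspan}. On the segalic map, property~(b) applied to the decomposition $m=(\underline 1\to\underline 1)^{\oplus n}$ gives the map $(X(f_1),\dots,X(f_n))\colon X_n\to X_1^n$, where $f_i\colon[1]\rat[n]$ are the principal edge inclusions; this is the right leg of \eqref{Deltanspan}. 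The resulting linear functor is therefore $\Delta_n$.

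The main point of care, rather than a genuine obstacle, is ensuring that the uniqueness of the intermediate span is genuine uniqueness at the level of $\DD$ (and not merely equivalence after passing through $\overline X$), so that the identification with $\Delta_n$ is canonical. This is guaranteed by the two rigidity constraints above: the ordinalic condition forces the target $\underline 1$, and the segalic condition forces the source $\underline n$, leaving no freedom. For the degenerate cases $n=0$ and $n=1$, one checks directly that the argument recovers the counit span $X_1\xleftarrow{s_0}X_0\to 1$ and the identity span on $X_1$, respectively, as expected.
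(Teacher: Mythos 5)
Your proposal is correct and follows essentially the same route as the paper: the ordinalic condition on the left leg forces the middle object to be connected (target $\underline 1$), and the segalic condition on the right leg forces its source to be $\underline n$, pinning down the span uniquely as the one inducing $\Delta_n$. You simply spell out in more detail the identification of the endpoints and the computation of $\overline X$ on the two legs via Proposition~\ref{DD-universal}, which the paper leaves implicit.
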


\begin{proof}
    We have to show that the only reasonable span of the form
    $X_1 \leftarrow  \prod X_{m_i}\to X_1 \times\dots\times X_1$
    is \eqref{Deltanspan}.
Indeed, the left leg must come from an ordinalic map, so since 
$X_{1}$ has only one factor, the middle object has also only one 
factor, i.e.~is the image of $\un m \to \un 1$.  On the other hand,
the right leg must be segalic, which forces $m=n$.
\end{proof}

Thus we have:

\begin{theorem}\label{thm:comultcoass}
  For $X$ a decomposition space, the slice $\infty$-category $\Grpd_{/X_1}$ has
  the structure of a strong homotopy comonoid in the symmetric monoidal
  $\infty$-category $\LIN$, with the comultiplication defined by the span
  $$
  X_1 \stackrel{d_1}\longleftarrow X_2 \xrightarrow{(d_2,d_0)} X_1 \times X_1 . 
  $$
\end{theorem}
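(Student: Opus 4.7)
The proof is a formal consequence of Lemma~\ref{lem:reasonable} and Proposition~\ref{prop:reasonable}. The plan is to observe first that each generalised comultiplication $\Delta_n$, in particular the comultiplication $\Delta = \Delta_2$, the counit $\varepsilon = \Delta_0$, and the identity $\Delta_1$, is a reasonable linear functor $\Grpd_{/X_1} \to \Grpd_{/X_1^n}$. Indeed, they arise from the reasonable spans in $\DD$
\[
\un 1 \longleftarrow \un n \xrightarrow{\;\;=\;\;} \un n
\]
whose left leg is the unique ordinalic map and whose right leg is the segalic identity. By Lemma~\ref{lem:reasonable}, any composite and tensor product of such basic operations is again a reasonable linear functor between slices of the form $\Grpd_{/X_1^k}$.

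For coassociativity, both $(\Delta \otimes \id) \circ \Delta$ and $(\id \otimes \Delta) \circ \Delta$ are reasonable linear functors $\Grpd_{/X_1} \to \Grpd_{/X_1 \times X_1 \times X_1}$. By Proposition~\ref{prop:reasonable}, or rather its evident extension with arbitrary products in the source, any such functor is canonically equivalent to $\Delta_3$, yielding a canonical coassociativity equivalence between the two composites. The same argument handles counitality: $(\varepsilon \otimes \id) \circ \Delta$, $\id$, and $(\id \otimes \varepsilon) \circ \Delta$ are all reasonable linear functors $\Grpd_{/X_1} \to \Grpd_{/X_1}$, hence all canonically equivalent to $\Delta_1 = \id$.

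For the \emph{strong homotopy} aspect, the higher coherences come for free from the essential uniqueness asserted in Proposition~\ref{prop:reasonable}. More precisely, I would package the data as a symmetric monoidal functor from (the opposite of) $\unDelta$ into $\LIN$: the object $\un n$ is sent to $\Grpd_{/X_1^n}$, and each monotone map $\un k \to \un n$ is sent to the reasonable linear functor induced by the corresponding reasonable span in $\DD$. Lemma~\ref{lem:reasonable} guarantees that composition is preserved and external sum is sent to tensor product, while Proposition~\ref{prop:DDecomp} (i.e.~the decomposition space axiom) guarantees that $\ov X$ respects the requisite pullbacks, so the construction is well defined up to canonical equivalence. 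The universal comonoid object in $(\unDelta\op, +)$ (the image of $\un 1$) is then transported to the comonoid structure on $\Grpd_{/X_1}$ in $\LIN$.

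The main subtlety is articulating the monoidal functor above with enough coherence to count as a strong homotopy comonoid in the $\infty$-categorical sense: one needs to exhibit the full sub-$\infty$-category of $\LIN$ spanned by the $\Grpd_{/X_1^n}$ and their reasonable linear functors as equivalent to the (nerve of the) ordinary category $\unDelta\op$, so that the $1$-categorical comonoid structure on $\un 1 \in \unDelta\op$ transports rigidly. Once this is in place, coassociativity, counitality, and all higher coherences hold automatically, because each parallel pair of reasonable functors is equivalent by an essentially unique (hence coherent) reasonable span-level morphism. This is the step where one really uses the decomposition space axiom in its full strength, via the characterisation of decomposition spaces as pullback-preserving monoidal functors $\ov X : \DD \to \Grpd$.
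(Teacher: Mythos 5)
Your proof follows the paper's own argument exactly: the theorem is deduced directly from Lemma~\ref{lem:reasonable} and Proposition~\ref{prop:reasonable}, which together show that all composites and tensor products of the generalised comultiplications $\Delta_n$ (all of which are reasonable linear functors) with the same source and target are canonically equivalent. Your additional packaging of the higher coherences as a monoidal functor out of $\unDelta\op$ transporting the universal comonoid $\un 1$ is simply a more explicit rendering of what the paper leaves implicit in its one-line proof, and is consistent with its intent.
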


\section{Functoriality of the incidence coalgebra  construction}

\label{sec:functorialities}

We have associated to any decomposition space $X$ its incidence
coalgebra with underlying slice $\infty$-category $\Grpd_{/X_1}$.  We now
investigate the functoriality of this construction.  Given a simplicial map
$F:X\to Y$ between decomposition spaces there are induced linear functors
$$
F\lowershriek:\Grpd_{/X_1}\to \Grpd_{/Y_1},\qquad
F\upperstar:\Grpd_{/Y_1}\to \Grpd_{/X_1},
$$
defined by postcomposition with, and pullback along, $F_1:X_1\to Y_1$.  In
this section we will give conditions on the simplicial map $F$ for these
linear functors to be coalgebra homomorphisms.

\begin{blanko}{Covariant functoriality.}    
  It is an important feature of
  \culf maps that they induce coalgebra homomorphisms:
\end{blanko}

\begin{lemma}\label{lem:coalg-homo}
  If $F:X \to Y$ is a \culf map  between decomposition spaces
  then $F\lowershriek : \Grpd_{/X_1} \to \Grpd_{/Y_1}$ is a coalgebra 
  homomorphism.
\end{lemma}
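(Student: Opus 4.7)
\begin{blanko}{Proof plan.}
The plan is to unpack what it means for $F\lowershriek$ to be a coalgebra homomorphism as an equivalence of linear functors in $\LIN$, and then to show that each of the two required equivalences reduces to exactly one of the two pullback squares built into the \culf hypothesis.

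First I would write out the two linear functors to be compared, as spans in $\Grpd$. Recall that $F\lowershriek$ is the linear functor defined by the span $X_1 = X_1 \xrightarrow{F_1} Y_1$, while $\Delta_X, \Delta_Y$ and $\varepsilon_X, \varepsilon_Y$ are defined by the spans in~\ref{comult}. Composing spans via the Beck--Chevalley formula~\eqref{BC}, the linear functor $\Delta_Y \circ F\lowershriek$ is represented by the span
$$X_1 \longleftarrow X_1 \times_{Y_1} Y_2 \longrightarrow Y_1 \times Y_1,$$
while $(F\lowershriek \tensor F\lowershriek) \circ \Delta_X$ is represented by
$$X_1 \xleftarrow{d_1} X_2 \xrightarrow{(F_1 \times F_1)\circ (d_2,d_0)} Y_1 \times Y_1.$$
By simpliciality of $F$, the right leg of the latter span equals $(d_2,d_0) \circ F_2$, and there is a canonical comparison map $X_2 \to X_1 \times_{Y_1} Y_2$ induced by $F_2$ and $d_1$. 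The two spans are equivalent precisely when this comparison is an equivalence, that is, when the naturality square of $F$ on the active face map $d_1$,
$$\xymatrix{
X_2 \ar[r]^{F_2} \ar[d]_{d_1} & Y_2 \ar[d]^{d_1} \\
X_1 \ar[r]_{F_1} & Y_1,
}$$
is a pullback. But this is exactly what the ULF hypothesis asserts, indeed Lemma~\ref{lem:cULFs0d1} tells us that cartesianness on the active map $[1]\to[2]$ is all that is required between decomposition spaces.

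For counit preservation, the same reasoning shows that $\varepsilon_Y \circ F\lowershriek$ is represented by $X_1 \leftarrow X_1 \times_{Y_1} Y_0 \to \terminal$, while $\varepsilon_X$ is represented by $X_1 \xleftarrow{s_0} X_0 \to \terminal$. These spans are equivalent precisely when the naturality square of $F$ on $s_0$ is a pullback, i.e.~when $F$ is conservative. Together, the two conditions are exactly \culf-ness.

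The conceptual point is that the notion of \culf functor was engineered precisely to make this argument work; there is no deep obstacle. The only real bookkeeping is to ensure that the equivalences assemble coherently at the $\infty$-category level, but this follows formally from the Beck--Chevalley equivalence~\eqref{BC} for pullbacks of $\infty$-groupoids, applied to the two naturality squares above.
\end{blanko}
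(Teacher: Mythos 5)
Your proof is correct and follows essentially the same route as the paper's: both reduce the homomorphism condition to the Beck--Chevalley equivalence applied to the naturality squares of $F$ on active maps, which are pullbacks precisely by the \culf hypothesis. The only cosmetic differences are that the paper treats all generalised comultiplications $\Delta_n$ (spans $X_1 \leftarrow X_n \to X_1^n$) uniformly in a single diagram rather than handling $n=2$ and $n=0$ separately, and that your appeal to Lemma~\ref{lem:cULFs0d1} is unnecessary, since the \culf hypothesis already gives cartesianness on $d_1$ and $s_0$ directly.
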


\begin{proof}
In the diagram
$$
\xymatrix{
   X_1  \ar[d]_{F_1}& \ar[l]_{g} \dlpullback X_n\ar[r]^-{f} \ar[d]^{F_n}
   &
   X_1^n \ar[d]^{F_1^n}
   \\
   Y_1  & \ar[l]^{g'} Y_n\ar[r]_-{f'} & Y_1^n
   }
   $$
   the left-hand square is a pullback since $F$ is conservative (case $n=0$) and ULF 
   (cases $n>1$).  Hence by the Beck--Chevalley condition we have an equivalence 
   of functors $g'{}\upperstar \circ F_1{}\lowershriek \simeq
   F_n{}\lowershriek\circ g{}\upperstar$, and by postcomposing with 
   $f'\lowershriek$ we arrive at the coalgebra homomorphism condition
$
\Delta'_nF_1{}\lowershriek
\cong
F_1{}\lowershriek^n\Delta_n
$. 
\end{proof}

\begin{blanko}{Remark.}
  If $Y$ is a Segal space, then the statement can be improved to an if-and-only-if
  statement.
\end{blanko}

\begin{blanko}{Example.}
  An important class of \culf maps are the dec maps 
  (Proposition~\ref{Dec=Segal+cULF}):
  $$
  d_\bot : \Decbot{X} \to X \qquad \text{ and } \qquad d_\top : 
  \Dectop{X} \to X .
  $$
  Many coalgebra maps in the classical theory of incidence
  coalgebras, notably reduction maps, are induced from decalage in this way,
  as we shall see in Section~\ref{sec:ex}, and as further
  amplified in \cite{GKT:ex}.
\end{blanko}

\begin{blanko}{Contravariant functoriality.}
  There is also a contravariant
  functoriality for certain simplicial maps, which we briefly explain,
  although it will not be needed elsewhere in this paper.
  
  We will say that a functor between decomposition spaces $F: X \to Y$ is {\em relatively
  Segal} when for any `spine' (i.e.~an inclusion of the string of principal edges
  into a simplex)
  $$\textstyle
  \Delta[1] \underset{\Delta[0]}\coprod \dots \underset{\Delta[0]}\coprod \Delta[1]
  \longrightarrow
  \Delta[n]  ,
  $$
  the space of fillers in the diagram
  $$\xymatrix{
    \Delta[1] \underset{\Delta[0]}\coprod \dots \underset{\Delta[0]}\coprod \Delta[1]
    \ar[r]\ar[d] & X \ar[d] \\
     \Delta[n] \ar[r]\ar@{-->}[ru] & Y
  }$$
  is contractible.
  Note that the precise condition is that the following square is a pullback:
  $$\xymatrix{
     \Map(\Delta[n], X)\drpullback \ar[r]\ar[d] & \Map(\Delta[1] \underset{\Delta[0]}\coprod \dots \underset{\Delta[0]}\coprod \Delta[1],X) \ar[d] \\
     \Map(\Delta[n],Y) \ar[r] & \Map(\Delta[1] \underset{\Delta[0]}\coprod \dots \underset{\Delta[0]}\coprod \Delta[1],Y).
  }$$
  This can be rewritten
  \begin{equation}\label{eq:relSegal}
  \vcenter{\xymatrix{
     X_n\drpullback \ar[r]\ar[d] & X_1 \times_{X_0} \cdots \times_{X_0} X_1 \ar[d] \\
     Y_n \ar[r] & Y_1 \times_{Y_0} \cdots \times_{Y_0} Y_1 .
  }}    
  \end{equation}
  (Hence the ordinary Segal condition for a simplicial space $X$ is the case
  where $Y$ is a point.)
\end{blanko}

\begin{prop}
  If $F:X \to Y$ is relatively Segal and $F_0: X_0 \to Y_0$
  is an equivalence, then
  $$
  F\upperstar : \Grpd_{/Y_1} \to \Grpd_{/X_1}
  $$
  is naturally a coalgebra homomorphism, that is, there is a canonical equivalence of functors
$$
  \Delta_n F_1{}\upperstar 
  \simeq
  F_1{}\upperstar {}^n\Delta'_n .
  $$
where $\Delta_n$ and $\Delta_n'$ are the comultiplication maps for $X$ and $Y$.
\end{prop}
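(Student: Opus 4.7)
The idea is to compute both composites as spans from $Y_1$ to $X_1^n$ and reduce the required equivalence of linear functors to a single pullback identification $X_n \simeq Y_n\times_{Y_1^n} X_1^n$, which we then split into two pullbacks using the two hypotheses on $F$.

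First unpack both sides as spans in $\LIN$. The functor $\Delta_n$ is represented by the span $X_1 \xleftarrow{\,\ell\,} X_n \xrightarrow{\,p\,} X_1^n$, where $\ell$ is induced by the long active edge $[1]\genmap[n]$ and $p$ by the $n$ principal inert edges; and $F_1\upperstar$ is represented by $Y_1 \xleftarrow{F_1} X_1 \xrightarrow{\id} X_1$. Span composition (which is just pullback along the identity) gives that $\Delta_n\circ F_1\upperstar$ is represented by $Y_1 \xleftarrow{F_1\ell} X_n \xrightarrow{p} X_1^n$. In the other direction, applying Beck--Chevalley~\eqref{BC} to the evident pullback square with bottom side $Y_n \to Y_1^n$ and right side $F_1^n$, we obtain that $(F_1\upperstar)^n \circ \Delta'_n$ is represented by $Y_1 \leftarrow Y_n\times_{Y_1^n} X_1^n \to X_1^n$. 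Hence the claim reduces to producing a leg-preserving equivalence $X_n \simeq Y_n\times_{Y_1^n} X_1^n$, and the leg-compatibility is automatic from naturality of $F$.

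Write $Z_1^{(n)} := Z_1\times_{Z_0}\cdots\times_{Z_0} Z_1$ for the fibred product along $d_0, d_1$. The relatively Segal hypothesis~\eqref{eq:relSegal} furnishes the pullback square
$$\xymatrix{ X_n \drpullback \ar[r]\ar[d] & X_1^{(n)} \ar[d] \\ Y_n \ar[r] & Y_1^{(n)}\,. }$$
By the pasting Lemma~\ref{pbk} it therefore suffices to establish a second pullback
$$\xymatrix{ X_1^{(n)} \ar[r]\ar[d] & X_1^n \ar[d]^{F_1^n} \\ Y_1^{(n)} \ar[r] & Y_1^n\,. }$$
For this, realise $Z_1^{(n)}\to Z_1^n$ as the pullback of the diagonal $Z_0^{n-1} \to Z_0^{n-1}\times Z_0^{n-1}$ along the map $Z_1^n \to Z_0^{n-1}\times Z_0^{n-1}$ recording the $n-1$ pairs of adjacent endpoints. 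Since $F_0$ is an equivalence, so are the powers $F_0^{n-1}$ and $F_0^{2(n-1)}$, and so the induced comparison square for the diagonals is trivially a pullback. Two applications of Lemma~\ref{pbk} to the resulting cube of pullback squares then deliver the required square.

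The substantive step is the second pullback above: morally, when $F_0$ is an equivalence the condition that the $F_1$-image of a tuple in $X_1^n$ be composable in $Y$ becomes equivalent to composability already in $X$, while the relatively Segal condition then upgrades composable $n$-tuples to full $n$-simplices. Everything else is formal span calculus in $\LIN$ and repeated pullback pasting, so no further difficulty is expected.
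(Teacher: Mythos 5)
Your proposal is correct and follows essentially the same route as the paper: both reduce the claim to showing that the square $X_n\to X_1^n$ over $Y_n\to Y_1^n$ is a pullback (you via span composition, the paper via Beck--Chevalley, which is the same computation), then factor it through the Segal-type fibred products, using the relatively Segal hypothesis for the first square and the equivalence $F_0$ --- fed through the diagonal on $X_0$, $Y_0$ --- for the second. The only point to add is the counit case $n=0$, where your $Z_0^{n-1}$ bookkeeping degenerates; there the required square is simply $X_0\to 1$ over $Y_0\to 1$, which is a pullback precisely because $F_0$ is an equivalence, exactly as the paper notes separately.
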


\begin{proof}
  In the diagram
  $$
  \xymatrix{
  X_1  \ar[d]_{F_1}& \ar[l]_{g}  X_n\ar[r]^-{f} \ar[d]_{F_n}
  &
  X_1^n \ar[d]^{F_1^n}
  \\
  Y_1  & \ar[l]^{g'} Y_n\ar[r]_-{f'}
  & Y_1^n
  }
  $$
  we claim that the right-hand square
  is a pullback for all $n$. In this case,
  by the Beck--Chevalley condition, we would have an equivalence 
  of functors $f\lowershriek \circ F_n{}\upperstar \simeq
  F_1^n{}\upperstar \circ f'{}\lowershriek$, and by precomposing with 
  $g'{}\upperstar $ we would arrive at the required coalgebra homomorphism condition.
  
    The claim for $n=0$ amounts to 
    $$
  \xymatrix{
   \drpullback X_0\ar[r]^-{f} \ar[d]_{F_0}
  &
  1 \ar[d]
  \\
   Y_0\ar[r]_-{f'} & 1
  }
  $$
  which is precisely to say that $F_0$ is an equivalence.
  For $n>1$ we can factor the square as
      $$
  \xymatrix{
   \drpullback X_n\ar[r]^-{f} \ar[d]_{F_n}
   & X_1\times_{X_0} \cdots \times_{X_0} X_1 \ar[d]^{F_1^n} \ar[r]
   & X_1 \times \cdots \times X_1 \ar[d]^{F_1^n}
   \\
   Y_n\ar[r]_-{f'} 
   & Y_1\times_{Y_0} \cdots \times_{Y_0} Y_1 \ar[r]
   & Y_1 \times \cdots \times Y_1
  }
  $$
  Here the left-hand square is a pullback since $F$ is relatively Segal.
  It remains to prove that the right-hand square is a pullback.
  For the case $n=2$, this whole square is the pullback of the square
  $$\xymatrix{
     X_0\drpullback \ar[r]\ar[d] & X_0 \times X_0 \ar[d] \\
     Y_0 \ar[r] & Y_0 \times Y_0
  }$$
  which is a pullback precisely when $F_0$ is mono.  But we have
  assumed it is even an equivalence.
  The general case $n>2$ is easily obtained from the $n\!=\!2$ case by an iterative
  argument.
\end{proof}

\begin{blanko}{Remarks.}
  It should be mentioned that in order for contravariant functoriality to
  preserve finiteness as in \cite{GKT:DSIAMI-2}, and hence restrict to
  coefficients in homotopy-finite $\infty$-groupoids, it is necessary
  furthermore to require that $F$ is finite, cf.~\cite{GKT:HLA}.
  
  When both $X$ and $Y$ are Segal spaces, then the relative Segal condition is
  automatically satisfied, because the horizontal maps in \eqref{eq:relSegal}
  are then equivalences.  In this case, we recover the classical results on
  contravariant functoriality by
  Content--Lemay--Leroux~\cite[Prop.~5.6]{Content-Lemay-Leroux} and
  Leinster~\cite{Leinster:1201.0413}, where the only condition is that the
  functor be bijective on objects (in addition to requiring $F$ finite,
  necessary since they work on the level of vector spaces).
\end{blanko}

\section{Monoidal decomposition spaces}
\label{sec:monoids-a}

The $\infty$-category of decomposition spaces (as a full subcategory of simplicial
spaces), has finite products.  Hence there is a symmetric monoidal structure
on the $\infty$-category $\Decomp^{\culfsymb}$ of decomposition spaces and \culf maps.
We still denote this product as $\times$, although of course it is not the
categorical product in $\Decomp^{\culfsymb}$.

\begin{blanko}{Definition.}
  A {\em monoidal decomposition space} is a monoid object $(X,m,e)$ in
  $(\Decomp^{\culfsymb}, \times, 1)$. 
  A {\em monoidal functor} between monoidal decomposition spaces is a monoid
  homomorphism in $(\Decomp^{\culfsymb}, \times, 1)$.
\end{blanko}

  By this we mean a monoid in the homotopy sense, that is, 
  an algebra in the sense of
  Lurie~\cite[\S4.1]{Lurie:HA}.  We do not wish at this point
  to go into the technicalities of this notion, since
  in our examples, the algebra structure will be given
  simply by sums (or products).

\begin{blanko}{Example.}\label{extensive}
  Recall that a category $\EE$ with finite sums is {\em extensive}
  \cite{Carboni-Lack-Walters} when the 
  natural functor $\EE_{/A} \times \EE_{/B} \to \EE_{/A+B}$ is an equivalence.
  The fat nerve of an extensive $1$-category is a monoidal decomposition space.
  The multiplication is given by taking sum, the neutral object by the initial 
  object, and the extensive property ensures precisely that, given a factorisation of
  a sum of maps, each of the maps splits into a sum of maps in a unique way.

  A key example is the category of sets, or of finite sets.  Certain
  subcategories, such as the category of finite sets and surjections, or the
  category of finite sets and bijections, inherit the crucial property $\EE_{/A}
  \times \EE_{/B} \simeq \EE_{/A+B}$.  They fail, however, to be extensive in
  the strict sense, since the monoidal structure $+$ in these cases is not the
  categorical sum.  Instead they are examples of {\em monoidal extensive}
  categories, meaning a monoidal category $(\EE, \boxplus, 0)$ for which
  $\EE_{/A} \times \EE_{/B} \to \EE_{/A \boxplus B}$ is an equivalence (and it
  should then be required separately that also $\EE_{/0}\simeq 1$).  The fat
  nerve of a monoidal extensive $1$-category is a monoidal decomposition space.
\end{blanko}

\begin{lemma}\label{dec-mon-is-mon}
  The lower dec of a monoidal decomposition space has again a natural monoidal 
  structure, and the dec map is a monoidal functor. The same is true for the upper dec.
\end{lemma}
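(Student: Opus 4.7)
The plan is to promote $\Dec_\bot$ to a product-preserving endofunctor on $\Decomp^{\culfsymb}$, and to realise the dec map as a monoidal natural transformation $\Dec_\bot \Rightarrow \id$. Since a product-preserving functor between cartesian monoidal $\infty$-categories automatically sends monoid objects to monoid objects and monoid homomorphisms to monoid homomorphisms, everything then falls into place; the same argument works for $\Dec_\top$ using the dual comonad.

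First I would verify the three properties that let $\Dec_\bot$ descend to $\Decomp^{\culfsymb}$. Recall that $\Dec_\bot = b\upperstar$ is precomposition with the add-bottom endofunctor $b: \simplexcategory\to\simplexcategory$, so as an endofunctor of simplicial $\infty$-groupoids it preserves all limits, in particular finite products: $\Dec_\bot(X\times Y) \simeq \Dec_\bot(X)\times \Dec_\bot(Y)$ and $\Dec_\bot(1)\simeq 1$. Proposition~\ref{Dec=Segal+cULF} shows that $\Dec_\bot$ sends decomposition spaces to Segal spaces, which are in particular decomposition spaces. Finally, Proposition~\ref{DecULF-is-right} guarantees that if $F:Y\to X$ is \culf then $\Dec_\bot(F)$ is a right fibration, hence \culf. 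Together these show that $\Dec_\bot$ restricts to a product-preserving endofunctor of $\Decomp^{\culfsymb}$.

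Now apply this functor to the monoid object $(X,m,e)$ in $\Decomp^{\culfsymb}$. Using the canonical equivalence $\Dec_\bot(X)\times \Dec_\bot(X)\simeq \Dec_\bot(X\times X)$, the images $\Dec_\bot(m)$ and $\Dec_\bot(e)$ endow $\Dec_\bot(X)$ with the structure of a monoid object in $\Decomp^{\culfsymb}$, i.e.\ of a monoidal decomposition space. For the dec map, note that $d_\bot:\Dec_\bot \Rightarrow \id$ is a natural transformation of endofunctors of simplicial $\infty$-groupoids (it is the counit of the $\Dec_\bot$ comonad), and each component is \culf by Proposition~\ref{Dec=Segal+cULF}. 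Its naturality squares with respect to $m$ and $e$,
$$
\xymatrix{
\Dec_\bot(X)\times\Dec_\bot(X) \ar[r]^-{\Dec_\bot(m)} \ar[d]_{d_\bot\times d_\bot} & \Dec_\bot(X) \ar[d]^{d_\bot} \\
X\times X \ar[r]_-{m} & X
}
\qquad
\xymatrix{
1 \ar[r]^-{\Dec_\bot(e)} \ar@{=}[d] & \Dec_\bot(X) \ar[d]^{d_\bot} \\
1 \ar[r]_-{e} & X,
}
$$
express precisely that $d_\bot$ is a monoid homomorphism, hence a monoidal functor between monoidal decomposition spaces.

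The upper case is obtained by replacing the add-bottom monad $b$ with the dual add-top monad on $\simplexcategory$; the analogues of Propositions~\ref{Dec=Segal+cULF} and~\ref{DecULF-is-right} hold by the same arguments with $\bot$ and $\top$ interchanged, so the reasoning above applies verbatim. The only point requiring some care is the translation between $1$-categorical intuition and the $\infty$-categorical notion of monoid object; but since we are working with a cartesian monoidal structure and all constructions are expressed in terms of products and a natural transformation between product-preserving functors, the required higher coherences are automatic via the general formalism of algebras in symmetric monoidal $\infty$-categories (see \cite[\S4.1]{Lurie:HA}).
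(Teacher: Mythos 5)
The paper states this lemma without proof, so there is nothing to compare against line by line; your argument supplies the missing details and is essentially the one the authors must have in mind: $\Dec_\bot=b\upperstar$ preserves levelwise products, sends decomposition spaces to Segal (hence decomposition) spaces by Proposition~\ref{Dec=Segal+cULF}, sends \culf maps to \culf maps by Proposition~\ref{DecULF-is-right}, and the counit $d_\bot$ is a \culf monoidal natural transformation, so monoid objects and the comparison map are transported automatically. The one point to tighten is your framing of $\Dec_\bot$ as a ``product-preserving endofunctor of $\Decomp^{\culfsymb}$'' and the appeal to the cartesian-monoidal formalism: as the paper itself stresses in Section~\ref{sec:monoids-a}, $\times$ is \emph{not} the cartesian product in $\Decomp^{\culfsymb}$ (the projections are not \culf), so the correct statement is that $\Dec_\bot$ restricts to a \emph{symmetric monoidal} endofunctor of $(\Decomp^{\culfsymb},\times,1)$, with the product-preservation taking place upstairs in the category of decomposition spaces and all simplicial maps, and the coherences inherited from there. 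With that rephrasing the transport of algebra objects along a symmetric monoidal functor, and of algebra homomorphisms along a monoidal natural transformation, goes through exactly as you say, and the $\Dec_\top$ case is dual.
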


\begin{blanko}{Bialgebras.}\label{sec:monoids}
  For a monoidal decomposition space the resulting coalgebra is also a
  bialgebra.  Indeed, the fact that the monoid multiplication is \culf
  means that it induces a coalgebra homomorphism, and similarly with the
  unit.  Note that in the bialgebras arising like this, the algebra
  and coalgebra are not on entirely equal footing: while the
  comultiplication is induced from internal, simplicial data in $X$, the
  multiplication is induced by extra structure (the monoidal structure),
  and is given by spans with trivial pullback component.  In examples
  coming from combinatorics, the monoid structure will typically be given
  by categorical sum.
\end{blanko}

\begin{blanko}{Running example: the Hopf algebra of rooted trees.}\label{ex:trees-bialg}
  The decomposition space $\dstrees$ of Examples~\ref{ex:trees-decomp} and
  \ref{ex:trees-coalg} has a canonical monoidal structure given by disjoint
  union.  Recall that $\dstrees_k$ is the groupoid of forests with $k-1$ compatible 
  admissible cuts.
  The disjoint
  union of two such structures is given by taking the disjoint union of the
  underlying forests, with the cuts concatenated.
  This clearly defines a simplicial map from
  $\dstrees\times \dstrees$ to $\dstrees$.  To say that it is \culf is to establish that squares
  like this are pullbacks:
\newcommand{\twoforests}{
  \bsegment
	\htext (-47 15){\large $\big($}
	\htext (48 15){\large $\big)$}
	\htext (0 3){$,$}
	\move (-19 0)
	\bsegment
	  \move (0 0) \Onedot \lvec (-7 15) \Onedot 
	  \lvec (-12 30) \Onedot \move (-7 15) \lvec (-2 30) \Onedot
	  \move (0 0) \lvec (7 15) \Onedot \lvec (8 30) \Onedot
	\esegment
	\move (14 0)
	\bsegment
	  \move (0 0) \Onedot \lvec (0 15) \Onedot 
	  \move (13 0) \Onedot \lvec (8 15) \Onedot \lvec (7 30) \Onedot
	  \move (13 0) \lvec (20 15) \Onedot
	\esegment
  \esegment
}
\newcommand{\joinedforests}{
  \bsegment
	\move (-14 0)
	\bsegment
	  \move (0 0) \Onedot \lvec (-7 15) \Onedot 
	  \lvec (-12 30) \Onedot \move (-7 15) \lvec (-2 30) \Onedot
	  \move (0 0) \lvec (7 15) \Onedot \lvec (8 30) \Onedot
	\esegment
	\move (12 0)
	\bsegment
	  \move (-2 0) \Onedot \lvec (-2 15) \Onedot 
	  \move (13 0) \Onedot \lvec (8 15) \Onedot \lvec (7 30) \Onedot
	  \move (13 0) \lvec (20 15) \Onedot
	\esegment
  \esegment
}
\medskip\par\noindent\hspace*{2cm}
$
\vcenter{\xymatrix{
\dstrees_1\times \dstrees_1 \ar[d]_{+} & \ar[l]_{d_1} \dlpullback \dstrees_2 \times \dstrees_2 \ar[d]^{+} \\
\dstrees_1 & \ar[l]^{d_1} \dstrees_2}}
\quad\qquad
\vcenter{\begin{texdraw}
      \setunitscale 0.5
      \footnotesize    
      \move (0 0)
	  \joinedforests
      \move (200 0)
	  \joinedforests
	  \move (160 22)
      \bsegment
		\move (5 -2) \clvec (60 15)(20 -27)(80 -14)
	  \esegment
      \move (0 120)
	  \twoforests
      \move (200 120)
	  \twoforests
      \move (200 120)
	  \bsegment
		\move (-35 20)
		\bsegment
		\move (0 0) \clvec (10 4)(20 4)(30 0)
		\esegment
		\move (9 5)
		\bsegment
		\move (0 0) \clvec (10 4)(20 4)(30 0)
		\esegment
	  \esegment
      \move (100 135)
      \bsegment
      \move (25 -3) \rlvec (0 6)
      \move (25 0) \ravec (-50 0) 
      \esegment
      \move (100 15)
      \bsegment
      \move (25 -3) \rlvec (0 6)
      \move (25 0) \ravec (-50 0) 
      \esegment
      \move (0 60)
      \bsegment
      \move (-3 33) \rlvec (6 0)
      \move (0 33) \avec (0 -3) 
      \esegment
      \move (200 60)
      \bsegment
      \move (-3 33) \rlvec (6 0)
      \move (0 33) \avec (0 -3) 
\esegment
    \end{texdraw}}
  $
\medskip\par\noindent  
  But this is clear: a pair of forests each with an admissible cut
  can be uniquely reconstructed if we know what the two underlying forests are
  (an element in $\dstrees_1\times \dstrees_1$) and we know how the disjoint union
  is cut (an element in $\dstrees_2$) --- provided of course that we can
  identify the disjoint union of those two underlying forests with the 
  underlying forest of the disjoint union (which is to say that the data
  agree down in $\dstrees_1$).
  It follows that the resulting incidence coalgebra is also a bialgebra, 
  in fact Hopf algebra.
\end{blanko}

\begin{prop}\label{bialg-hm}
  If $F:X\to Y$ is a monoidal \culf functor between monoidal decomposition
  spaces, then $F\lowershriek : \Grpd_{/X_1} \to \Grpd_{/Y_1}$ is a
  bialgebra homomorphism.
\end{prop}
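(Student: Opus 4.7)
\begin{proof*}{Proof proposal.}
To show $F\lowershriek$ is a bialgebra homomorphism, it suffices to verify that it is both a coalgebra homomorphism and an algebra homomorphism (compatibility of unit and counit following from the corresponding statements at the simplicial level). The coalgebra homomorphism part is already established: by hypothesis $F$ is \culf, so Lemma~\ref{lem:coalg-homo} applies directly and yields the required equivalence $\Delta^Y_n \circ F_1{}\lowershriek \simeq (F_1{}\lowershriek)^{\tensor n} \circ \Delta^X_n$ for all $n$, including $n=0$ (counit preservation) and $n=2$ (comultiplication preservation).

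For the algebra homomorphism part, the plan is to unwind the definition of the multiplication on $\Grpd_{/X_1}$ described in~\ref{sec:monoids}. Since the monoidal structure $m : X \times X \to X$ is \culf, the induced multiplication on the incidence coalgebra is the linear functor
$$
\Grpd_{/X_1} \tensor \Grpd_{/X_1} \;=\; \Grpd_{/X_1 \times X_1} \xrightarrow{\;m^X_1{}\lowershriek\;} \Grpd_{/X_1},
$$
given by the span $X_1 \times X_1 \xleftarrow{=} X_1 \times X_1 \xrightarrow{m^X_1} X_1$, and similarly for $Y$. That $F$ is monoidal means in particular that the square of simplicial maps $F \circ m^X \simeq m^Y \circ (F \times F)$ commutes, which at the level of $1$-simplices gives a commutative square
$$
\xymatrix{
X_1 \times X_1 \ar[r]^-{m^X_1}\ar[d]_{F_1 \times F_1} & X_1 \ar[d]^{F_1} \\
Y_1 \times Y_1 \ar[r]_-{m^Y_1} & Y_1.
}
$$
Applying lowershriek (post-composition of spans) to this square gives the required canonical equivalence $F_1{}\lowershriek \circ m^X_1{}\lowershriek \simeq m^Y_1{}\lowershriek \circ (F_1 \times F_1)\lowershriek$, that is, $F\lowershriek$ commutes with multiplication. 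Unit preservation is analogous: the unit $e^X : 1 \to X$ gives $e^X_1 : 1 \to X_1$, and monoidality of $F$ yields $F_1 \circ e^X_1 \simeq e^Y_1$, so $F_1{}\lowershriek \circ e^X_1{}\lowershriek \simeq e^Y_1{}\lowershriek$.

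There is no real obstacle, since the multiplication is given by spans in which the left leg is an identity, so composition of spans reduces to ordinary composition of maps; the only thing to check is that the equivalences assemble coherently. This is automatic from the fact that all the relevant data come by applying the functor $(-)\lowershriek : \Grpd \to \LIN$ (from $\infty$-groupoids and maps to $\infty$-groupoid slices and spans of identity-left-leg form) to the commutative diagrams of simplicial $\infty$-groupoids expressing that $F$ is a monoid homomorphism in $(\Decomp^{\culfsymb},\times,1)$. Combined with the coalgebra homomorphism part from Lemma~\ref{lem:coalg-homo}, this shows $F\lowershriek$ is a bialgebra homomorphism.
\end{proof*}
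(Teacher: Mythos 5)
Your proposal is correct and is essentially the argument the paper intends: the paper states this proposition without proof, and the expected justification is exactly the combination of Lemma~\ref{lem:coalg-homo} (for the coalgebra part, since a monoidal \culf functor is in particular \culf) with the observation from \ref{sec:monoids} that the multiplication is the span with trivial pullback leg induced by $m_1$, so that the monoidality square for $F$ in degree $1$ plus functoriality of lowershriek gives the algebra part. Your informal handling of the higher coherences is at the same level of rigour the paper itself adopts for monoid objects (cf.\ the remarks following the definition in \S\ref{sec:monoids-a}).
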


\section{Examples}

\label{sec:ex}

\begin{blanko}{Injections and the monoidal groupoid of sets under sum.}\label{I=DecB}
  Let $\mathbf{I}$ be the fat nerve of the category of finite sets and
  injections, and let $\mathbf{B}$ be the monoidal nerve of the monoidal
  groupoid $(\B, +, 0)$ of finite sets and bijections (see \ref{prop:BM}).  If
  we construct the incidence coalgebra of the decomposition space $\mathbf I$
  and impose the equivalence relation `having isomorphic complements' then, as
  observed by D\"ur~\cite{Dur:1986}, we obtain the binomial coalgebra.  The
  binomial coalgebra also arises directly as the incidence coalgebra of
  $\mathbf{B}$, and D\"ur's reduction arises as a \culf functor from a decalage:
  
\begin{lemma}\label{lem:DecB}
There is a levelwise equivalence of simplicial groupoids 
  \begin{align*}
   \Decbot{\mathbf B}  & \stackrel\simeq\longrightarrow  \mathbf{I}  \\
   \intertext{given in degree $k$ by}
    (x_0,\dots,x_k) & \longmapsto  [x_0\subseteq x_0+x_1 \subseteq  \dots \subseteq x_0+\dots+x_k],\\
    (y_0,y_1\setminus y_0,\dots,y_k\setminus y_{k-1})
                          &  \;\longmapsfrom  [y_0\subseteq y_1 \subseteq  \dots \subseteq y_k].
          \end{align*}
    The canonical \culf functor 
  $$
  d_\bot: \Decbot{\mathbf B} \to \mathbf B,\qquad (x_0,\dots,x_k)\mapsto (x_1,\dots,x_k)$$
  defines the reduction map $r:\mathbf{I}\to \mathbf B$.
\end{lemma}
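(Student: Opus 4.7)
\medskip

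\noindent\textbf{Proof plan.} The plan is to verify three things in turn: (i) the proposed assignment $F\colon \Dec_\bot(\mathbf B) \to \mathbf{I}$ is a well-defined simplicial functor; (ii) it is an equivalence of groupoids in each simplicial degree, with inverse given (up to choice of complements) by the stated formula; and (iii) postcomposing $F$ with $d_\bot$ recovers the reduction map~$r$.

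For (i), I would first unpack the simplicial structure on $\Dec_\bot(\mathbf B)$, using the description of $\mathbf B$ from Proposition~\ref{prop:BM}: we have $(\Dec_\bot\mathbf B)_k = \mathbf B_{k+1} = \B^{k+1}$, with each face/degeneracy of $\Dec_\bot$ equal to the one-higher-indexed face/degeneracy in $\mathbf B$. A direct bookkeeping check then matches operators under $F$: the inner face $d_i^{\Dec}$ combines the adjacent summands $x_i$ and $x_{i+1}$, which on the chain side skips the middle element $y_i = x_0 + \cdots + x_i$; the face $d_0^{\Dec}$ combines $x_0$ and $x_1$, which on the chain side amounts to deleting $y_0$; the top face $d_k^{\Dec}$ drops $x_k$, which on the chain side drops $y_k$; and the degeneracies (which insert an empty summand $0 \in \B$) correspond exactly to repeating an intermediate set. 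Morphisms of $(k{+}1)$-tuples are componentwise bijections, and these glue to the evident isomorphisms of subset chains, so $F$ is functorial on groupoids.

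For (ii), I would define a candidate inverse $G$ by choosing, for each chain $[y_0 \subseteq y_1 \subseteq \cdots \subseteq y_k]$, the tuple $(y_0,\, y_1\setminus y_0,\, \ldots,\, y_k\setminus y_{k-1})$ of complements (for an abstract injection, one chooses a complement; any two choices are related by a canonical bijection, so $G$ is well-defined as a functor of groupoids up to natural equivalence). The composites $F\circ G$ and $G\circ F$ are then connected to the identities by canonical natural isomorphisms arising from the obvious bijections
\[
\bigl(x_0 + \cdots + x_i\bigr) \setminus \bigl(x_0 + \cdots + x_{i-1}\bigr) \;\cong\; x_i,
\qquad
y_0 + (y_1\setminus y_0) + \cdots + (y_i\setminus y_{i-1}) \;\cong\; y_i.
\]
Thus $F$ is an equivalence in each degree, and by (i) it is a levelwise equivalence of simplicial groupoids.

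For (iii), observe that at level $k$ the dec map $d_\bot$ is the original $d_0$ of $\mathbf B$, namely the projection $(x_0,x_1,\ldots,x_k)\mapsto (x_1,\ldots,x_k)$. Transported through $F$, this is the assignment sending a chain $[y_0 \subseteq y_1 \subseteq \cdots \subseteq y_k]$ to the sequence of successive complements $(y_1\setminus y_0,\ldots,y_k\setminus y_{k-1})$, which is precisely D\"ur's reduction $r$. The only mild subtlety anywhere in the argument is the non-canonical choice of complements in $G$; this is harmless at the level of groupoids, but if a strict model is preferred one can pass to the equivalent groupoid of genuine subset chains, in which complements are defined on the nose.
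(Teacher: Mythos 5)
Your proposal is correct and is exactly the direct verification the paper has in mind: the paper states this lemma without proof, offering only the staircase-diagram illustration of $\mathbf{I}_3\simeq\mathbf{B}_4$ and the description of how the face maps act, so your write-up simply supplies the details (operator bookkeeping, the choice-of-complements quasi-inverse, and the identification of $d_\bot$ with D\"ur's reduction) that the authors leave to the reader.
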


The equivalence may also be represented using diagrams reminiscent of those
in Waldhausen's \Sdot-construction, cf.~\ref{Sdot}
below. As an example, both groupoids $\mathbf{I}_3 $ and
$(\Decbot{\mathbf B})_3=\mathbf B_4$ are equivalent to the groupoid of
diagrams
  $$\xymatrix@R-0.8pc{
   &   &   &x_3 \ar[d] 
\\
   &   &x_2 \ar[d] \ar[r] & x_2+x_3 \ar[d] 
\\
   &x_1\ar[r]\ar[d] & x_1+x_2 \ar[d] \ar[r] & x_1+x_2+x_3 \ar[d] 
\\
x_0 \ar[r] & x_0+x_1 \ar[r] & x_0+x_1+x_2\ar[r] & x_0+x_1+x_2+x_3
  }$$
For $i<3$, the face maps $d_i:\mathbf{I}_3\to \mathbf{I}_2$ and
$d_{i}:(\Decbot{\mathbf B})_3\to (\Decbot{\mathbf B})_2$ act by erasing the
column beginning $x_i$ and the row beginning $x_{i+1}$. The top face map
$d_3$ erases the last column.
The face map $d_0:
\mathbf{B}_4\to\mathbf{B}_3$ erases the bottom row.

Both $\mathbf{I}$ and $\mathbf{B}$ are monoidal decomposition spaces under
disjoint union, and $\mathbf{I}\simeq \Decbot{\mathbf B} \to \mathbf{B}$
is a monoidal functor by Lemma~\ref{dec-mon-is-mon}, inducing a
homomorphism of bialgebras $\Grpd_{/{\mathbf{I}}_1}\to\Grpd_{/\mathbf B_1}$
by Proposition \ref{bialg-hm}, which is the reduction map described by
D\"ur~\cite{Dur:1986}.

Recall from \cite[2.3]{GKT:HLA} that 
the functors $\name S : 1\to\mathbf B_1$, $1\mapsto S$, play the role of a basis of
$\Grpd_{/\mathbf B_1}$ as $S$ ranges over $\pi_0\mathbf B_1$. 
The comultiplication on $\Grpd_{/\mathbf B_1}$ is 
  \begin{align*}
    \Delta&(\name S) 
\;= 
\sum_{A+B=S}
 \name A \tensor \name B 
  \end{align*}
  (where the sum is more specifically over all $A ,B\!\subset\!  S,\; A\cup B
  \!=\!  S,\; A\cap B \!=\! \varnothing$).  The decomposition space {\bf B} is
  {\em locally finite} (see \cite[\S 7]{GKT:DSIAMI-2}), that is,
  $\mathbf B_1$ has finite automorphism groups and the maps
      $s_0:\mathbf B_0\to \mathbf B_1$ and 
$d_1:\mathbf B_2\to\mathbf B_1$ are finite. Therefore we can take cardinality (as in
  \cite{GKT:HLA}), giving the classical binomial coalgebra spanned by symbols
  $\delta_n$ (the cardinality of $\name n : 1 \to {\bf B}_1$) with
  $$
  \Delta(\delta_n) = \sum\limits_{a+b=n} \frac{n!}{a!\,b!}\,\delta_a \tensor 
  \delta_b.
  $$ 
  As a bialgebra we have $(\delta_1)^n=\delta_n$ and one recovers the
  comultiplication from $\Delta(\delta_n)= \big( \delta_0 \tensor \delta_1 +
  \delta_1 \tensor \delta_0 \big)^n$.

  The objective level is much richer.
  The linear dual~\cite{GKT:DSIAMI-2} of $\Grpd_{/\mathbf B_1}$ is
  $\Grpd^{\mathbf B_1}$, the
  category of groupoid-valued species~\cite{Baez-Dolan:finset-feynman}, 
  \cite{Kock:MFPS28}, and its multiplication is the  monoidal structure
  given by the convolution 
  formula
  $$
  (F*G)[S] = \sum_{A+B=S} F[A] \times G[B] ,
  $$
  which is precisely the Cauchy product of species (see \cite{Aguiar-Mahajan}).
  The cardinality of this monoidal category is
  the profinite-dimensional vector space $\Q^{\pi_0\mathbf B_1}$ with pro-basis
  given by the symbols $\delta^n$ (dual to $\delta_n$), with convolution product
  $$
  \delta^a* \delta^b = \frac{n!}{a!\,b!}\,\delta^{a+b} .
  $$
  This is isomorphic to the algebra $\Q[[z]]$, where $\delta^n$  
  corresponds to  $z^n/n!$ and the cardinality of a species $F$ corresponds 
  precisely
  to its exponential generating series \cite{JoyalMR633783}.
\end{blanko}

\begin{blanko}{Graphs.}\label{ex:graphs}
  The following coalgebra of graphs is due to
  Schmitt~\cite[\S 12]{Schmitt:1994}.
  For a graph $G$ with vertex set $V$ (admitting multiple edges and loops),
  and a subset $U \subset V$, define
  $G|U$ to be the graph whose vertex set is $U$, and whose edges are those
  edges of $G$ both of whose incident vertices belong to $U$.
  On the vector space spanned by iso-classes of graphs,
  define a comultiplication by the rule
  $$
  \Delta(G) = \sum_{A+B=V} G|A \tensor G|B .
  $$
  
  This coalgebra is the cardinality of the coalgebra of a decomposition space
  (cf.~\cite{GKT:ex}), but not directly of a category.  Indeed, define a
  simplicial groupoid with $\mathbf G_1$ the groupoid of graphs, and more
  generally let $\mathbf G_k$ be the groupoid of graphs with an ordered
  partition of the vertex set into $k$ (possibly empty) parts.  In particular,
  $\mathbf G_0=\terminal$ is the contractible groupoid consisting only of the
  empty graph.  The outer face maps delete the first or last part of the graph,
  and the inner face maps join adjacent parts.  The degeneracy maps insert an
  empty part.  It is clear that this is not a Segal space: a graph structure on
  a given set cannot be reconstructed from knowledge of the graph structure of
  the parts of the set, since chopping up the graph and restricting to the parts
  throws away all information about edges going from one part to another.  One
  can easily check that it is a decomposition space.  It is clear that the
  cardinality of the resulting coalgebra is Schmitt's coalgebra of
  graphs.  Note that disjoint union of graphs makes this into a bialgebra, in
  fact a Hopf algebra.
\end{blanko}

\begin{blanko}{Running example: the Hopf algebra of rooted trees.}\label{ex:CK}
  D\"ur~\cite[IV \S3]{Dur:1986} 
  gave an incidence-coalgebra construction of
  the Butcher--Connes--Kreimer Hopf algebra by starting with the category $\CC$ of
  rooted forests and root-preserving inclusions, generating a coalgebra (in our
  language the incidence coalgebra of the fat nerve of $\CC$), and imposing the
  equivalence relation that identifies two root-preserving forest inclusions if
  their complement crowns are isomorphic forests.  To be precise, this yields
  the opposite of the Butcher--Connes--Kreimer coalgebra, in the sense that the
  factors $P_c$ and $R_c$ are interchanged.  To remedy this, one should use
  $\CC\op$ instead of $\CC$.
  
  As we have seen (in our running example~\ref{ex:preintro}, 
  \ref{ex:trees-decomp}, \ref{ex:trees-coalg}, \ref{ex:trees-bialg}),
  we can obtain the Butcher--Connes--Kreimer Hopf algebra directly from the
  (monoidal)
  decomposition space (see~\cite{GKT:restriction} for more details) $\dstrees$
  where $\dstrees_1$ denotes the groupoid
  of forests, and $\dstrees_2$ is the groupoid of forests with an
  admissible cut, and so on. 
  The relationship with D\"ur's construction is this 
  (cf.~\cite{GKT:restriction}): the
  `raw' decomposition space $\fatnerve(\CC\op)$ is the decalage 
  of $\dstrees$,
  in close analogy with Lemma~\ref{lem:DecB}:
  $$
  \Dectop{\dstrees} \simeq \fatnerve(\CC\op) .
  $$
  Furthermore, under this identification, the dec map $\Dectop{\dstrees}
  \to \dstrees$, always a (monoidal) \culf functor, realises
  precisely D\"ur's reduction: on $(\fatnerve\CC\op)_1
  \to \dstrees_1$ it sends a root-preserving forest inclusion to its crown,
  that is, its complement.  More generally, on $(\fatnerve\CC\op)_k \to \dstrees_k$ it
  sends a sequence
  of forest inclusions $ F_0 \subset F_1 \subset \dots \subset F_k $ to
  $$
  F_1\shortsetminus F_0 \subset \dots \subset F_k \shortsetminus F_0 .
  $$ 
\end{blanko}

\begin{blanko}{Restriction species, directed restriction species, and operads.}
  The graph example $\mathbf G$ is an example of a decomposition space coming
  from a restriction species in the sense of Schmitt~\cite{Schmitt:hacs} (see
  also \cite{Aguiar-Mahajan}).  The tree example $\dstrees$ is an example of a
  decomposition space coming from a {\em directed restriction species}, a notion
  introduced in~\cite{GKT:restriction}, formalising the idea of considering only
  decompositions compatible with an underlying poset structure, as exemplified
  clearly by the notion of admissible cut.

  While the decomposition space $\dstrees$ is not a Segal space, it
  admits important variations which {\em are} Segal spaces, namely by replacing
  the combinatorial trees above by various kinds of {\em operadic} trees.  These
  yield only bialgebras instead of Hopf algebras, but the Segal property has
  been exploited to good effect in various contexts \cite{Kock:1109.5785},
  \cite{GalvezCarrillo-Kock-Tonks:1207.6404}, \cite{Kock:1411.3098},
  \cite{Kock:1512.03027}.  These examples are subsumed in the general notion of
  incidence bialgebra of an operad, cf.~\cite{GKT:ex}
  and \cite{Kock-Weber:FdB}.
\end{blanko}

\begin{blanko}{$q$-binomials: $\F_q$-vector spaces.}\label{ex:q}
  Consider the finite field $\mathbb F_q$ with $q$ elements.  The $q$-binomial
  coalgebra (see D\"ur~\cite[1.54]{Dur:1986}) may be obtained as a
  reduction of the incidence coalgebra of the category $\vect$, of
  finite-dimensional $\mathbb F_q$-vector spaces and $\mathbb F_q$-linear
  injections, by identifying two injections if their cokernels are isomorphic.
  
  The same coalgebra can be obtained without reduction as follows.  Put $\mathbf
  V_0 = \terminal$ (the contractible groupoid of $0$-dimensional 
  vector spaces), let $\mathbf V_1$ be the maximal subgroupoid of $\vect$, and let
  $\mathbf V_2$ be the groupoid of short exact sequences.  The span
  $$
   \xymatrixrowsep{4pt}
  \xymatrix {
  \mathbf V_1 & \ar[l] \mathbf V_2 \ar[r]  &\mathbf V_1\times\mathbf V_1 \\
  E & \ar@{|->}[l] [ E' \!\to\! E \!\to\! E''] \ar@{|->}[r]  & (E',E'')
  }$$
  (together with the span $\mathbf V _1 \leftarrow \mathbf V_0 \to 1$) defines a
  coalgebra on $\Grpd_{/\mathbf V_1}$ which (after taking cardinality) is the
  $q$-binomial coalgebra, without further reduction.  The groupoids and maps
  involved are part of a simplicial groupoid $\mathbf V: \simplexcategory\op\to\Grpd$,
  namely the Waldhausen \Sdot-construction of $\vect$, which
  is a decomposition space but not a Segal space (cf.~\ref{Sdot} below).  The lower dec
  of $\mathbf V$ is naturally equivalent
  to the fat nerve of $\vect$, and the
  comparison map $d_0$ is the reduction map of D\"ur.

  Although we have postponed the notion of the dual incidence {\em algebra}
  to \cite{GKT:DSIAMI-2}, we wish to mention that in this case the
  incidence algebra is $\Grpd^{\mathbf V_1}$,
  which is the category of groupoid-valued $q$-species, and the 
  convolution tensor product resulting from our constructions
  is the {\em external product of $q$-species}
  of Joyal--Street~\cite{Joyal-Street:GLn} (except that they work
  with vector-space valued $q$-species).  A main contribution of 
  \cite{Joyal-Street:GLn} is to show that this monoidal structure carries
  a non-trivial braiding.  This is a very interesting structure, which
  cannot be seen after taking cardinality.
  
  One can compute explicitly (see \cite{GKT:ex}) the section coefficients
  of the comultiplication (or the convolution product) to find the Hall numbers
   $$
  \frac{\norm{\text{SES}_{k,n,n-k}}}{\norm{\Aut(\F_q^k)}\norm{\Aut(\F_q^{n-k})}}  
  =  { n \choose k}_q ,
  $$
  where $\text{SES}_{k,n,n-k}$ denotes the groupoid of
short exact sequences of fixed vector spaces of
  dimensions $k$, $n$, and $n-k$.

  This example is a special case of the following general construction
  with wide-ranging ramifications and consequences.
\end{blanko}

\begin{blanko}{Waldhausen \Sdot-construction of an abelian category~\cite{Waldhausen}.}
  \label{Sdot}
  We follow Lurie~\cite[Subsection 1.2.2]{Lurie:HA} for the account of
  Waldhausen's \Sdot-construction.  For $I$ a linearly ordered set,
let $\Ar(I)$ denote the category of arrows in $I$: the objects
are pairs of elements $i\leq j$ in $I$, and the morphisms are relations
$(i,j) \leq (i',j')$ whenever $i\leq i'$ and $j\leq j'$.
A {\em gap complex} in an abelian category $\mathcal A$ is a functor
$F: \Ar(I) \to \mathcal A$ such  that

\begin{enumerate}
  \item For each $i \in I$, the object $F(i,i)$ is a zero object.

  \item For every $i \leq j \leq k$, the associated diagram
  $$\xymatrix{
 \makebox[0ex][r]{$0\simeq{}$} F(j,j) \ar@{ >->}[r] & F(j,k) \\
  F(i,j) \ar@{->>}[u] \ar@{ >->}[r] & F(i,k) \ar@{->>}[u]
  }$$
  is a pushout (or equivalently a pullback).
\end{enumerate}
Since the pullback of a monomorphism is always a monomorphism, and the pushout
of an epimorphism is always an epimorphism, it follows that automatically the
horizontal maps are monomorphisms and the vertical maps are epimorphisms, as
already indicated with the arrow typography.  Altogether, it is just a
convenient way of saying `short exact sequence' or `(co)fibration sequence'.

Let $\operatorname{Gap}(I,\mathcal A)$ denote the full subcategory of
$\Fun(\Ar(I),\mathcal A)$ consisting of the gap complexes, and
$\operatorname{Gap}(I,\mathcal A)^\eq$ its maximal subgroupoid.  The assignment
$$
[n] \mapsto \operatorname{Gap}([n],\mathcal A)^{\eq}
$$
defines a simplicial space $\Sdot\mathcal A: \simplexcategory\op\to\Grpd$, which by
definition is the Waldhausen \Sdot-construction of $\mathcal A$.
Intuitively (or essentially), the
groupoid $\operatorname{Gap}([n],\mathcal A)^{\eq}$ has as objects
staircase diagrams like the following (picturing $n=4$).
$$\xymatrix{
&&& A_{34} \\
&& A_{23} \ar@{ >->}[r] & A_{24} \ar@{->>}[u] \\
& A_{12} \ar@{ >->}[r] & A_{13} \ar@{->>}[u]\ar@{ >->}[r] & A_{14} \ar@{->>}[u] \\
A_{01} \ar@{ >->}[r] & A_{02} \ar@{->>}[u]\ar@{ >->}[r] & A_{03} 
\ar@{->>}[u]\ar@{ >->}[r] & A_{04} \ar@{->>}[u]
}$$
\end{blanko}
Informally, the face map $d_i$ `erases' all objects containing an $i$ index.
The degeneracy map $s_i$ repeats the $i$th row and the $i$th column.

A string of composable monomorphisms 
$(A_{1}\rightarrowtail A_{2}\rightarrowtail \dots \rightarrowtail A_{n})$
determines, up to canonical isomorphism,  short exact  
sequences $A_{ij}\rightarrowtail A_{ik}\twoheadrightarrow 
A_{jk}=A_{ij}/A_{ik}$ with $A_{0i}=A_i$.
Hence the whole diagram can be reconstructed up to isomorphism from the
bottom row.  (Similarly, since epimorphisms have uniquely determined kernels,
the whole diagram can also be reconstructed from the last column.)

We have 
\begin{align*}
d_0(A_1\rightarrowtail A_2\rightarrowtail \dots \rightarrowtail A_{n})&= 
(A_2/A_1\rightarrowtail \dots \rightarrowtail A_{n}/A_1)
\\s_0(A_1\rightarrowtail A_2\rightarrowtail \dots \rightarrowtail A_{n})&= 
(0\rightarrowtail 
A_1\rightarrowtail 
A_2\rightarrowtail \dots \rightarrowtail A_{n}) 
\end{align*}
The simplicial maps $d_i,s_i$ for $i\geq1$ are more straightforward: the 
simplicial set $\Decbot{(\Sdot\mathcal A)}$ is just the fat
nerve of $\mathrm{mono}(\mathcal A)$.

\begin{lemma}
  The projection $S_{n+1} \mathcal A \to \Map([n], \mathrm{mono}(\mathcal A))$
  is an equivalence. 
  Similarly the projection 
  $S_{n+1} \mathcal A \to \Map([n], \mathrm{epi}(\mathcal A))$ is an equivalence.
\end{lemma}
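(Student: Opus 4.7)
The plan is to construct a quasi-inverse to the projection $p : S_{n+1}\mathcal{A} \to \Map([n], \mathrm{mono}(\mathcal{A}))$, which sends a gap complex $F$ on $[n+1]$ to the bottom row string $F(0,1) \rightarrowtail F(0,2) \rightarrowtail \cdots \rightarrowtail F(0,n+1)$, and to verify that both composites are canonically equivalent to the respective identities. This will give a rigorous incarnation of the informal remark preceding the lemma that a staircase diagram is reconstructible up to canonical isomorphism from its bottom row.

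First I would define a candidate inverse $q$. Given a string of monomorphisms $A_0 \rightarrowtail \cdots \rightarrowtail A_n$, set $F(0,0) = 0$ and $F(0,i) = A_{i-1}$ for $i \geq 1$, and for each $0 \leq i \leq j \leq n+1$ take $F(i,j)$ to be a cokernel of the composite monomorphism $F(0,i) \rightarrowtail F(0,j)$, equipped with the induced epimorphism $F(0,j) \twoheadrightarrow F(i,j)$. The monomorphisms $F(i,j) \rightarrowtail F(i,k)$ for $j \leq k$ are then induced by the universal property of $F(i,j)$ as a cokernel.

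Next I would verify that $F$ is a gap complex. The diagonal condition $F(i,i) = 0$ is immediate. The bicartesian square condition for $i \leq j \leq k$ amounts to showing that $F(i,j) \rightarrowtail F(i,k) \twoheadrightarrow F(j,k)$ is a short exact sequence, which is the classical third isomorphism theorem applied to the inclusions $F(0,i) \subseteq F(0,j) \subseteq F(0,k)$, namely $F(0,k)/F(0,j) \cong (F(0,k)/F(0,i)) / (F(0,j)/F(0,i))$. Composing with $p$ recovers the bottom row by construction, and for the other composite, the bicartesian axiom on a given gap complex $F$ identifies $F(i,j)$ with a cokernel of $F(0,i) \rightarrowtail F(0,j)$, so $q(p(F))$ is canonically isomorphic to $F$ by the universal property.

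The main obstacle is coherence: the construction of $q$ involves a choice of cokernel for each pair $i \leq j$, so as a strict functor into $\operatorname{Gap}([n+1],\mathcal{A})$ it is only defined up to canonical isomorphism. However, since cokernels in an abelian category are unique up to unique isomorphism, these canonical isomorphisms assemble to make $q$ well-defined as a functor into the groupoid $S_{n+1}\mathcal{A} = \operatorname{Gap}([n+1],\mathcal{A})^\eq$ and make both $p \circ q$ and $q \circ p$ naturally equivalent to the identities. The parallel assertion for $\mathrm{epi}(\mathcal{A})$ follows by the dual argument, using the rightmost column $F(0,n+1), F(1,n+1), \dots, F(n,n+1)$ of epimorphisms and reconstructing $F(i,j)$ as the kernel of $F(i,n+1) \twoheadrightarrow F(j,n+1)$, which is identified as such by the bicartesian square with indices $i \leq j \leq n+1$.
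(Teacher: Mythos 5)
Your proposal is correct and takes essentially the same route as the paper, which in fact states this lemma without a formal proof, relying only on the preceding informal observation that the staircase diagram is reconstructed up to canonical isomorphism from its bottom row (via cokernels) or its last column (via kernels) --- exactly the argument you spell out, with the third isomorphism theorem supplying the bicartesian squares. Your handling of the coherence issue (cokernels being unique up to unique isomorphism, so the quasi-inverse and the natural isomorphisms with the identities are well defined on the groupoid $\operatorname{Gap}([n+1],\mathcal A)^{\mathrm{eq}}$) addresses the only genuinely delicate point.
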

More precisely:

\begin{prop}
  These equivalences assemble into levelwise simplicial 
equivalences
$$
\Decbot{(\Sdot\mathcal A)} \simeq \fatnerve(\mathrm{mono}(\mathcal A))
$$
$$
\Dectop{(\Sdot\mathcal A)} \simeq \fatnerve(\mathrm{epi}(\mathcal A)) .
$$
\end{prop}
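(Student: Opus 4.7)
The plan is to exhibit the promised simplicial equivalences as the ``bottom row'' and ``rightmost column'' projections from the preceding lemma, and then verify that these projections are in fact simplicial maps. Since each projection is already an equivalence in every degree by the lemma, simplicial naturality automatically upgrades them to equivalences of simplicial $\infty$-groupoids. Recall the conventions: at level $n$, both $\Dec_\bot(\Sdot\mathcal A)$ and $\Dec_\top(\Sdot\mathcal A)$ equal $S_{n+1}\mathcal A = \operatorname{Gap}([n+1],\mathcal A)^{\eq}$. In $\Dec_\bot$ the $i$th face (resp.\ degeneracy) operator corresponds to $d_{i+1}$ (resp.\ $s_{i+1}$) of $\Sdot\mathcal A$ at level $n+1$, since $d_0$ and $s_0$ have been deleted and the rest reindexed. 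In $\Dec_\top$ the operators are just the original $d_i$ and $s_i$ of $\Sdot\mathcal A$, since it is the top ones that have been deleted.

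For the lower dec, assign to a gap complex $F\colon \Ar([n+1]) \to \mathcal A$ its bottom row $F(0,1) \rightarrowtail F(0,2) \rightarrowtail \dots \rightarrowtail F(0,n+1)$, viewed as an object of $\Map([n], \mathrm{mono}(\mathcal A))$. Applying $d_{i+1}$ to $F$ corresponds to the coface $[n+1] \into [n+2]$ skipping $i+1$; on the bottom row this simply deletes $F(0,i+1)$, the $i$th entry of the string, matching the $i$th face map of $\fatnerve(\mathrm{mono}(\mathcal A))$. Likewise $s_{i+1}$ corresponds to the codegeneracy $[n+2] \to [n+1]$ identifying $i+1$ with $i+2$, which on the bottom row duplicates $F(0,i+1)$ with an intervening identity, i.e.\ the $i$th degeneracy of the fat nerve. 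The upper dec case is entirely parallel with the rightmost column $F(0,n+1) \twoheadrightarrow F(1,n+1) \twoheadrightarrow \dots \twoheadrightarrow F(n,n+1)$, viewed in $\Map([n], \mathrm{epi}(\mathcal A))$: here $d_i$ removes $F(i, n+1)$ and $s_i$ inserts an identity at position $i$, matching the corresponding operators of $\fatnerve(\mathrm{epi}(\mathcal A))$.

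The substantive content lies in the preceding lemma, whose justification uses condition (2) of the gap complex definition together with $F(j,j) = 0$: the pushout reading gives $F(j,k) \cong F(0,k)/F(0,j)$ for $j\geq 1$, so the staircase is canonically reconstructible from its bottom row by cokernels; the pullback reading dually gives $F(i,j) \cong \ker\!\bigl(F(i,n+1) \twoheadrightarrow F(j,n+1)\bigr)$, so the staircase is canonically reconstructible from its rightmost column by kernels. Granted this, the naturality verifications above reduce to routine bookkeeping on $\Ar([n+1])$, and combining them delivers the simplicial equivalences $\Dec_\bot(\Sdot\mathcal A) \simeq \fatnerve(\mathrm{mono}(\mathcal A))$ and $\Dec_\top(\Sdot\mathcal A) \simeq \fatnerve(\mathrm{epi}(\mathcal A))$ asserted in the proposition.
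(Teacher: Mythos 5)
Your proposal is correct and follows exactly the route the paper intends: the paper gives no formal proof of this Proposition, but the surrounding discussion (reconstruction of the staircase from its bottom row by cokernels, resp.\ from its last column by kernels, and the observation that the $d_i,s_i$ for $i\geq 1$ act on the bottom row as the fat-nerve operators) is precisely the argument you have written out, with the index bookkeeping for the dec reindexing done correctly. Nothing to add.
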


\begin{theorem}
  The Waldhausen \Sdot-construction of an abelian category $\mathcal A$
  is a decomposition space.
\end{theorem}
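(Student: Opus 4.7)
The plan is to apply Theorem~\ref{thm:decomp-dec-segal}, using the equivalent characterisation (1)$\Leftrightarrow$(4): $X$ is a decomposition space provided both $\Dec_\top X$ and $\Dec_\bot X$ are Segal spaces and two specific small squares involving $X_0,X_1,X_2$ are pullbacks. Writing $X = \Sdot\mathcal{A}$, the preceding Proposition already identifies
$$\Dec_\bot X \simeq \fatnerve(\mathrm{mono}(\mathcal{A})), \qquad \Dec_\top X \simeq \fatnerve(\mathrm{epi}(\mathcal{A})),$$
and these are Segal spaces because the fat nerve of any $1$-category is (see~\ref{fatnerve}). So the Segal half of (4) is immediate, and everything reduces to verifying the two explicit pullback squares.

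The relevant squares involve only $X_0,X_1,X_2$, which in the Waldhausen description are, respectively, the contractible groupoid of zero objects, the maximal subgroupoid of $\mathcal{A}$, and the groupoid of short exact sequences $A \rightarrowtail C \twoheadrightarrow B$. The face map $d_\bot=d_0$ returns the quotient $B$, $d_\top=d_2$ returns the subobject $A$, and the relevant degeneracies produce the trivial sequences $s_1(A) = (A \rightarrowtail A \twoheadrightarrow 0)$ and $s_0(A) = (0 \rightarrowtail A \twoheadrightarrow A)$. For the first square
$$\xymatrix@R=1.2em{X_1 \ar[r]^{s_1} \ar[d]_{d_\bot} & X_2 \ar[d]^{d_\bot} \\ X_0 \ar[r]_{s_0} & X_1,}$$
the pullback $X_0 \times_{X_1} X_2$ is the groupoid of short exact sequences with zero quotient; such a sequence forces $A \isopil C$, so the pullback is canonically equivalent to $X_1$ via $C \mapsto C$, with $s_1$ providing the inverse equivalence. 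The second square is verified by the symmetric argument, using that a sequence with zero subobject is canonically equivalent to its quotient via~$s_0$.

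The substantive content is thus outsourced to the preceding Proposition, whose proof relies on the fact that a gap complex in $\mathcal{A}$ is uniquely (and coherently) reconstructible from its bottom row of composable monomorphisms via iterated cokernels; this is where the abelian-category hypotheses are genuinely used. Once that input is in hand, no obstacle remains: the two pullback checks above reduce to the elementary splitting property of trivial short exact sequences. The same organisation of proof will carry over to a stable $\infty$-category, replacing short exact sequences by bifibre sequences and invoking the fibre-cofibre equivalence in place of kernel-cokernel duality.
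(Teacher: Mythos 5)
Your proposal is correct and follows essentially the same route as the paper: invoke Theorem~\ref{thm:decomp-dec-segal} together with the preceding proposition identifying the two decalages with fat nerves of $\mathrm{mono}(\mathcal A)$ and $\mathrm{epi}(\mathcal A)$, then check the two degeneracy squares by observing that a short exact sequence with zero quotient (resp.\ zero subobject) is nothing but an isomorphism, so the relevant fibre of $d_\bot$ (resp.\ $d_\top$) over the groupoid of zero objects is $\mathcal A^{\iso}=S_1$. The only cosmetic difference is that the paper phrases the pullback check via the fibre over a zero object, first noting that $s_0:S_0\to S_1$ is a monomorphism of groupoids because zero objects have no nontrivial automorphisms, whereas you compute the pullback $X_0\times_{X_1}X_2$ directly; these are the same argument.
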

\begin{proof}
  For convenience we write $\Sdot\mathcal A$ simply as $\Sdot$.
  The previous proposition already implies that the two $\operatorname{Dec}$s
  of $\Sdot$
  are Segal spaces.  By Theorem~\ref{thm:decomp-dec-segal}, it is 
  therefore enough to
  establish that the squares
  $$
  \xymatrix{S_1 \ar[r]^{s_1} \ar[d]_{d_0} & S_2 \ar[d]^{d_0} \\
  S_0 \ar[r]_{s_0} & S_1
  }
  \qquad \qquad
  \xymatrix{S_1 \ar[r]^{s_0} \ar[d]_{d_1} & S_2 \ar[d]^{d_2} \\
  S_0 \ar[r]_{s_0} & S_1
  }
  $$
  are pullbacks.
  Since a zero object has no nontrivial automorphisms,  $s_0 : S_0 \to S_1$ is a monomorphism of groupoids, given by the inclusion of
the groupoid of zero objects into $S_1 = \mathcal A^{\operatorname{iso}}$.
  The map $d_0 : S_2 \to S_1$
  sends a monomorphism to its cokernel, and its fibre over a zero 
  object is the full subgroupoid
  of $S_2$ consisting of those monomorphisms whose cokernel is zero.  Clearly these
  are precisely the isos, so the fibre is just $\mathcal A^{\operatorname{iso}} = S_1$.
  The other pullback square is established similarly, but arguing with epimorphisms
  instead of monomorphisms.
\end{proof}
\begin{blanko}{Remark.}
  Waldhausen's \Sdot-construction was designed for more general categories than
  abelian categories, namely what are now called Waldhausen categories, where
  the cofibrations play the role of the monomorphisms, but where there is no stand-in
  for the epimorphisms.  The theorem does not generalise to Waldhausen categories in
  general, since in that case $\Dectop{(\Sdot)}$ is not necessarily
  a Segal space of any class of arrows.
\end{blanko}

\begin{blanko}{Waldhausen \Sdot\ of a stable $\infty$-category.}
  The same construction works in the $\infty$-setting, by considering stable
  $\infty$-categories instead of abelian categories.  Let $\mathcal A$ be a
  stable $\infty$-category (see Lurie~\cite[\S1.1.1]{Lurie:HA}).  Just as in the abelian
  case, the assignment
  $$
  [n] \mapsto \operatorname{Gap}([n],\mathcal A)^{\eq}
  $$
  defines a simplicial space $\Sdot\mathcal A: \simplexcategory\op\to\Grpd$, which by
  definition is the Waldhausen \Sdot-construction of $\mathcal A$.
  Note that in the case of a stable $\infty$-category, in contrast to the abelian 
  case, every map can arise as either horizontal or vertical arrow in a gap 
  complex.  Hence the role of monomorphisms (cofibrations)
  is played by all maps, and the role of epimorphisms is also played by all maps.
\end{blanko}

\begin{lemma}
  Suppose $\mathcal A$ is a stable $\infty$-category.
  For each $k\in \N$, the two projection functors
$S_{k+1}\mathcal A \to \Map(\Delta[k], \mathcal A)$
are equivalences.
\end{lemma}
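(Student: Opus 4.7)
The plan is to establish the equivalence by constructing an explicit inverse via iterated pushouts, exploiting that in a stable $\infty$-category $\mathcal{A}$ all finite colimits exist and a square is a pushout if and only if it is a pullback. Both projections will be handled by the same argument, and I focus on the projection to the bottom row; the last-column case is symmetric since in the stable setting pushouts coincide with pullbacks. Write $\iota : [k] \hookrightarrow \Ar([k+1])$ for the inclusion $i \mapsto (0, i+1)$, so that restriction along $\iota$ sends a gap complex $F$ to the string $F(0,1) \to F(0,2) \to \cdots \to F(0,k+1)$.

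I would construct the inverse to $\iota^*$ as a left Kan extension along $\iota$, augmented by the requirement that the diagonal take value $0$. Concretely, given a string $A_0 \to A_1 \to \cdots \to A_k$, define $F(0,j+1) := A_j$, $F(i,i) := 0$, and inductively set $F(i,j)$ to be the pushout of $F(i-1,j) \leftarrow F(i-1,i) \to 0$, i.e.\ the cofibre of $F(i-1,i) \to F(i-1,j)$. Each such pushout exists and is unique up to contractible choice by stability, and by the pasting lemma for pushouts, every square in the resulting staircase is biCartesian, so $F$ is a gap complex in the sense of \ref{Sdot}.

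The remaining point is to show that this construction is inverse to $\iota^*$ as an equivalence of $\infty$-groupoids --- equivalently, that the fibre of $\iota^*$ over any string is contractible. This I would do by induction on $k$: the passage from a gap complex of size $k$ to one of size $k{+}1$ adjoins one new column whose entries are determined via iterated pushouts from data already in place, and the space of choices at each step is the space of pushout squares with prescribed cospan, which is contractible in any $\infty$-category admitting pushouts. The principal technical obstacle is to promote this pointwise contractibility into a genuine equivalence of mapping spaces; this is standard for left Kan extensions along fully faithful functors, and reduces to verifying that the relevant comma categories are cofinal in the diagrams of cospans, which again boils down to the existence of the pushouts provided by stability. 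The argument parallels the abelian case established above, with stability supplying the symmetric role previously played by kernels and cokernels, and applied now to every arrow rather than just monomorphisms and epimorphisms.
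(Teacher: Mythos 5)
Your proof is correct and fills in, via the standard Kan-extension argument (cf.\ Lurie, Higher Algebra, Lemma 1.2.2.4), exactly the reconstruction-by-iterated-cofibres observation on which the paper relies: the paper in fact states this lemma without proof, appealing only to the remark that a gap complex is determined up to coherent equivalence by its bottom row (resp.\ last column) together with zero objects on the diagonal. The one imprecision is that the relevant left Kan extension is along the inclusion of the subposet consisting of the bottom row \emph{and} the diagonal (with the diagonal values constrained to be zero objects), not along $\iota$ alone --- which is what your phrase ``augmented by the requirement that the diagonal take value $0$'' already acknowledges.
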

From the description of the face and degeneracy maps, 
the following more precise result follows readily,
comparing with the fat nerves in the sense of \ref{fatinfty}:

\begin{prop}
  For $\mathcal A$ a stable $\infty$-category, we have natural
  (levelwise) simplicial equivalences
$$
\Decbot{(\Sdot\mathcal A)} \simeq \fatnerve\mathcal A
$$
$$
\Dectop{(\Sdot\mathcal A)} \simeq \fatnerve\mathcal A.
$$
\end{prop}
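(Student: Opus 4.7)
The plan is to bootstrap from the preceding lemma, which already supplies, for each $k\geq 0$, equivalences of $\infty$-groupoids $S_{k+1}\mathcal A \simeq \Map(\Delta[k],\mathcal A)$ given by the two projection functors (bottom-row and last-column). The content of the proposition is not the existence of these equivalences individually but rather that they assemble into maps of simplicial $\infty$-groupoids. Recall that $\Dec_\bot(\Sdot\mathcal A)_k = S_{k+1}\mathcal A$ with structure maps $d_i^{\Sdot}, s_i^{\Sdot}$ for $i\geq 1$ only, while $\fatnerve(\mathcal A)_k = \Map(\Delta[k],\mathcal A)$.

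First I would construct the comparison map $\varphi\colon \Dec_\bot(\Sdot\mathcal A) \to \fatnerve(\mathcal A)$ level-wise as restriction of a gap complex to its bottom row. Concretely, this is precomposition with the functor $\Delta[k] \to \Ar(\Delta[k+1])$ picking out the arrows $(0,1)\leq(0,2)\leq\cdots\leq(0,k+1)$. Naturality in $[k]$ is automatic from the functoriality of this prescription, and by the preceding lemma each $\varphi_k$ is an equivalence. So the only real task is to check that $\varphi$ commutes with the face and degeneracy maps.

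Second I would verify the simplicial compatibility on generators. Crucially, because the $d_0^{\Sdot}$ face map (the one that forms quotients $A_i/A_1$) has been deleted by the lower decalage, every face map of $\Dec_\bot(\Sdot\mathcal A)$ acts on a staircase by erasing a row/column indexed by some $i\geq 1$, and every degeneracy acts by duplicating such an index. Under bottom-row projection these translate directly into the standard face/degeneracy operations on strings of composable arrows in $\mathcal A$, namely composition of consecutive arrows, truncation of the last arrow, and insertion of identities. For the very last face map $d_{k+1}^{\Sdot}$ (which erases the outermost column), compatibility holds by construction. This gives $\varphi$ the structure of a simplicial map, and since it is a levelwise equivalence, it is a simplicial equivalence.

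The case of $\Dec_\top$ is parallel: replace bottom-row projection by last-column projection, noting that in a stable $\infty$-category the distinguished squares are both pullbacks and pushouts, so the last column of a gap complex also determines a string of composable morphisms $A_{0,k+1} \to A_{1,k+1}\to\cdots\to A_{k,k+1}$ in $\mathcal A$. The deleted top face map $d_{k+1}^{\Sdot}$ is the one that would truncate this column from above, so the remaining face and degeneracy maps act by the standard operations on strings. The main potential obstacle is purely a matter of orientation and sign-keeping: one must take care that the bookkeeping of indices matches on both sides of the equivalence, particularly in the identification of which face map of $\Sdot$ corresponds to inner composition versus outer truncation on the string. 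This is routine once the correct projection embedding $\Delta[k]\hookrightarrow \Ar(\Delta[k+1])$ is fixed, but is the only place where care is required; everything else is supplied by the previous lemma together with the stability of $\mathcal A$.
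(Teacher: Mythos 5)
Your proposal is correct and follows exactly the route the paper intends: the paper offers no written proof beyond the remark that the result ``follows readily from the description of the face and degeneracy maps'' together with the preceding lemma, and your argument is precisely that remark fleshed out (bottom-row/last-column projection as precomposition with the natural inclusion $[k]\hookrightarrow\Ar([k{+}1])$, levelwise equivalence by the lemma, and the observation that the deleted $d_\bot$ resp.\ $d_\top$ is exactly the face map that would fail to commute with the projection). The index bookkeeping you flag as the only delicate point does check out, so nothing further is needed.
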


\begin{theorem}\label{thm:WaldhausenS}
  Waldhausen's $\Sdot$-construction of a
  stable $\infty$-category $\mathcal A$ is a decomposition space.
\end{theorem}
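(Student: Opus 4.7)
The plan is to follow the same template used in the abelian case, invoking Theorem~\ref{thm:decomp-dec-segal} and exploiting the preceding Proposition that identifies both $\Dec_\bot$ and $\Dec_\top$ of $\Sdot\mathcal{A}$ with the fat nerve $\fatnerve(\mathcal{A})$. Since $\mathcal{A}$ is stable, $\fatnerve(\mathcal{A})$ is (the nerve of) an $\infty$-category, hence a Segal space. By Theorem~\ref{thm:decomp-dec-segal}, the only thing left to check is that the two squares
$$
\xymatrix{S_1 \ar[r]^{s_1} \ar[d]_{d_0} & S_2 \ar[d]^{d_0} \\
  S_0 \ar[r]_{s_0} & S_1}
\qquad\qquad
\xymatrix{S_1 \ar[r]^{s_0} \ar[d]_{d_1} & S_2 \ar[d]^{d_2} \\
  S_0 \ar[r]_{s_0} & S_1}
$$
are pullbacks, where $S_n := \operatorname{Gap}([n],\mathcal{A})^\eq$.

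First, I would observe that $S_0$ is the $\infty$-groupoid of zero objects of $\mathcal{A}$, which is contractible since any two zero objects in a stable $\infty$-category are canonically equivalent (with contractible space of equivalences). Thus $s_0 : S_0 \to S_1$ is a monomorphism in the sense of \ref{def:mono}, namely the inclusion of the connected component of zero objects. So to check each square is a pullback it suffices (by basic properties of pullbacks of $\infty$-groupoids along monomorphisms) to identify the fibre of the corresponding vertical map on the right over a zero object with $S_1$.

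Next I would compute these fibres. The map $d_0 : S_2 \to S_1$ sends a morphism $A_1 \to A_2$ (the bottom row of the gap complex) to its cofibre $A_2/A_1$. Its fibre over a zero object consists of morphisms whose cofibre is zero, and in a stable $\infty$-category a map has zero cofibre precisely when it is an equivalence; hence this fibre is equivalent to $\mathcal{A}^\eq = S_1$, and one checks the resulting comparison equivalence coincides with $s_1$ restricted to the fibre over a fixed zero object. This establishes that the left-hand square is a pullback. For the right-hand square, the map $d_2 : S_2 \to S_1$ sends a gap complex to its rightmost column, i.e. the fibre of $A_1 \to A_2$; its fibre over a zero object again consists of equivalences, since in a stable $\infty$-category a map has zero fibre iff it is an equivalence. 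Thus both squares are pullbacks.

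The only step that requires a little care is the identification of the fibres: in the abelian setting one uses that monomorphisms with zero cokernel are isomorphisms and dually, but in the stable $\infty$-categorical setting one uses instead the fundamental fact that a morphism in a stable $\infty$-category is an equivalence iff its fibre (equivalently, its cofibre) is a zero object, which is immediate from the definition of stable $\infty$-category via pullback=pushout squares (see Lurie~\cite{Lurie:HA}, \S1.1.1). With this in hand, the proof runs in complete parallel to the abelian case, with the added simplification that, since horizontal and vertical arrows in a gap complex in $\mathcal{A}$ play symmetric roles, both Decs are equivalent to the same fat nerve $\fatnerve(\mathcal{A})$, and the two pullback-square verifications are strictly dual to one another.
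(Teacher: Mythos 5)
Your proposal is correct and follows essentially the same route as the paper's proof: reduce to Theorem~\ref{thm:decomp-dec-segal} via the identification of both decalages with $\fatnerve(\mathcal A)$, note that $s_0:S_0\to S_1$ is a monomorphism, and compute the fibres of $d_0$ and $d_2$ over a zero object using the stable-$\infty$-categorical fact that a map is an equivalence iff its (co)fibre vanishes. Your write-up merely spells out in more detail the fibre identifications that the paper leaves implicit.
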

\begin{proof}
  The proof is exactly the same as in the abelian case, relying on the following
  three facts: 
  \begin{enumerate}
    \item The $\operatorname{Dec}$s are Segal spaces.
  
    \item $s_0 : S_0 \to S_1$ is a monomorphism of $\infty$-groupoids.
  
    \item A map (playing the role of monomorphisms) is an equivalence if and only if its cofibre
    is the zero object, and a map (playing the role of epimorphism) is an equivalence 
    if and only if its fibre is the zero object.
  \end{enumerate}
\end{proof}

\begin{blanko}{Remark.}
  This theorem was proved independently (and first) by Dyckerhoff and 
  Kapranov~\cite{Dyckerhoff-Kapranov:1212.3563}, Theorem 7.3.3.
  They prove it more generally for exact $\infty$-categories, a notion they
  introduce.  Their proof that Waldhausen's $\Sdot$-construction of an exact 
  $\infty$-category is a decomposition space is somewhat more complicated than ours above.
  In particular their proof of unitality (the pullback condition on
  degeneracy maps) is technical and 
  involves Quillen model structures on certain marked 
  simplicial sets \`a la Lurie~\cite{Lurie:HTT}.
  We do not wish to go into exact $\infty$-categories here, and refer
  instead the reader to \cite{Dyckerhoff-Kapranov:1212.3563}, but we wish
  to point out that our simple proof above works as
  well for exact $\infty$-categories.  This follows since the three points in
  the proof hold also for exact $\infty$-categories,
  which in turn is a consequence of 
  the definitions and basic results provided in 
  \cite[Sections~7.2 and 7.3]{Dyckerhoff-Kapranov:1212.3563}.
\end{blanko}

\begin{blanko}{Hall algebras.}\label{Hall}
  The finite-support incidence algebra of a decomposition space $X$ is
  defined in \cite[7.15]{GKT:DSIAMI-2}; see also \cite{Dyckerhoff-Kapranov:1212.3563}.
  In order for it to admit a cardinality, the
  required assumptions are, in addition to $X_1$ being locally finite, 
  that $X_0$ be finite and 
  that $X_2 \to X_1 \times X_1$ be a finite map.  In the
  case of $X= \Sdot\mathcal A$ for an abelian category $\mathcal A$, this
  translates into the condition that $\Ext^0$ and $\Ext^1$ be finite (which in
  practice means `finite dimension over a finite field').  The finite-support
  incidence algebra in this case is the {\em Hall algebra} of $\mathcal A$
  (cf.~Ringel~\cite{Ringel:Hall}; see also 
  \cite{Schiffmann:0611617},
  although these sources twist the multiplication by the so-called Euler form).

  For a stable $\infty$-category $\mathcal A$, with mapping spaces assumed to be
  locally finite (\cite[3.1]{GKT:HLA}), the finite-support incidence algebra of
  $\Sdot\mathcal A$ is the {\em derived Hall algebra}.  These were introduced by
  To\"en~\cite{Toen:0501343} in the setting of dg-categories.
  
  Hall algebras were one of the main motivations for Dyckerhoff and 
  Kapranov~\cite{Dyckerhoff-Kapranov:1212.3563} to
  introduce $2$-Segal spaces.  We refer to their work for development 
  of this important topic, recommending as entry point
  the lecture notes of Dyckerhoff~\cite{Dyckerhoff:1505.06940}.
\end{blanko}


\begin{thebibliography}{10}

\bibitem{Aguiar-Mahajan}
{\sc Marcelo Aguiar {\rm and }Swapneel Mahajan}.
\newblock {\em Monoidal functors, species and {H}opf algebras}, vol.~29 of CRM
  Monograph Series.
\newblock American Mathematical Society, Providence, RI, 2010.
\newblock With forewords by Kenneth Brown and Stephen Chase and Andr{\'e}
  Joyal.

\bibitem{Baez-Dolan:finset-feynman}
{\sc John~C. Baez {\rm and }James Dolan}.
\newblock {\em From finite sets to {F}eynman diagrams}.
\newblock In {\em Mathematics unlimited --- 2001 and beyond}, pp. 29--50.
  Springer, Berlin, 2001.

\bibitem{Baez-Hoffnung-Walker:0908.4305}
{\sc John~C. Baez, Alexander~E. Hoffnung, {\rm and }Christopher~D. Walker}.
\newblock {\em Higher dimensional algebra {VII}: groupoidification}.
\newblock Theory Appl. Categ. {\bf 24} (2010), 489--553.
\newblock \arxiv{0908.4305}.

\bibitem{BaWi:1985}
{\sc Hans-Joachim Baues {\rm and }G{\"u}nther Wirsching}.
\newblock {\em Cohomology of small categories}.
\newblock J. Pure Appl. Algebra {\bf 38} (1985), 187--211.

\bibitem{Berger:Adv}
{\sc Clemens Berger}.
\newblock {\em A cellular nerve for higher categories}.
\newblock Adv. Math. {\bf 169} (2002), 118--175.

\bibitem{Berger-Mellies-Weber:1101.3064}
{\sc Clemens Berger, Paul-Andr{\'e} Melli{\`e}s, {\rm and }Mark Weber}.
\newblock {\em Monads with arities and their associated theories}.
\newblock J. Pure Appl. Algebra {\bf 216} (2012), 2029--2048.
\newblock \arxiv{1101.3064}.

\bibitem{Bergner:0610239}
{\sc Julia~E. Bergner}.
\newblock {\em A survey of {$(\infty,1)$}-categories}.
\newblock In {\em Towards higher categories}, vol. 152 of IMA Vol. Math. Appl.,
  pp. 69--83. Springer, New York, 2010.
\newblock \arxiv{math.AT/0610239}.

\bibitem{Butcher:1972}
{\sc John~C. Butcher}.
\newblock {\em An algebraic theory of integration methods}.
\newblock Math. Comp. {\bf 26} (1972), 79--106.

\bibitem{Carboni-Lack-Walters}
{\sc Aurelio Carboni, Stephen Lack, {\rm and }Robert F.~C. Walters}.
\newblock {\em Introduction to extensive and distributive categories}.
\newblock J. Pure Appl. Algebra {\bf 84} (1993), 145--158.

\bibitem{Cartier-Foata}
{\sc Pierre Cartier {\rm and }Dominique Foata}.
\newblock {\em Probl{\`e}mes combinatoires de commutation et
  r{\'e}arrangements}.
\newblock No.~85 in Lecture Notes in Mathematics. Springer-Verlag, Berlin, New
  York, 1969.
\newblock Republished in the ``books'' section of the S{\'e}minaire
  Lotharingien de Combinatoire.

\bibitem{Connes-Kreimer:9808042}
{\sc Alain Connes {\rm and }Dirk Kreimer}.
\newblock {\em Hopf algebras, renormalization and noncommutative geometry}.
\newblock Commun. Math. Phys. {\bf 199} (1998), 203--242.
\newblock \arxiv{hep-th/9808042}.


\bibitem{Content-Lemay-Leroux}
{\sc Mireille Content, Fran{\c{c}}ois Lemay, {\rm and }Pierre Leroux}.
\newblock {\em Cat\'egories de {M}\"obius et fonctorialit\'es: un cadre
  g\'en\'eral pour l'inversion de {M}\"obius}.
\newblock J. Combin. Theory Ser. A {\bf 28} (1980), 169--190.

\bibitem{Doubilet-Rota-Stanley}
{\sc Peter Doubilet, Gian-Carlo Rota, {\rm and }Richard Stanley}.
\newblock {\em On the foundations of combinatorial theory. {VI}. {T}he idea of
  generating function}.
\newblock In {\em Proceedings of the {S}ixth {B}erkeley {S}ymposium on
  {M}athematical {S}tatistics and {P}robability ({U}niv. {C}alifornia,
  {B}erkeley, {C}alif., 1970/1971), {V}ol. {II}: {P}robability theory}, pp.
  267--318. Univ. California Press, Berkeley, Calif., 1972.

\bibitem{Dur:1986}
{\sc Arne D{\"u}r}.
\newblock {\em M\"obius functions, incidence algebras and power series
  representations}, vol. 1202 of Lecture Notes in Mathematics.
\newblock Springer-Verlag, Berlin, 1986.

\bibitem{Dyckerhoff:1505.06940}
{\sc Tobias Dyckerhoff}.
\newblock {\em Higher categorical aspects of {H}all Algebras}, in: Building Bridges Between Algebra and Topology. Advanced Courses in Mathematics - CRM Barcelona, Birkhäuser, Basel, 2018.
%
\newblock \arxiv{1505.06940}.

\bibitem{Dyckerhoff-Kapranov:1403.5799}
{\sc Tobias Dyckerhoff {\rm and }Mikhail Kapranov}.
\newblock {\em Crossed simplicial groups and structured surfaces}.
\newblock In
{\em Stacks and categories in geometry, topology, and algebra}.  
Contemp. Math., 643, pp.37--110, Amer. Math. Soc., Providence, RI, 2015.
\newblock \arxiv{1403.5799}.

\bibitem{Dyckerhoff-Kapranov:1212.3563}
{\sc Tobias Dyckerhoff {\rm and }Mikhail Kapranov}.
\newblock {\em Higher {S}egal spaces {I}}.
\newblock To appear in Springer Lecture Notes in Mathematics.
\newblock  \arxiv{1212.3563}.

\bibitem{Dyckerhoff-Kapranov:1306.2545}
{\sc Tobias Dyckerhoff {\rm and }Mikhail Kapranov}.
\newblock {\em Triangulated surfaces in triangulated categories}.
\newblock To appear in J. Eur. Math. Soc.
\newblock  \arxiv{1306.2545}.



\bibitem{Fiore-Luck-Sauer:0908.3417}
{\sc Thomas~M. Fiore, Wolfgang L{\"u}ck, {\rm and }Roman Sauer}.
\newblock {\em Finiteness obstructions and {E}uler characteristics of
  categories}.
\newblock Adv. Math. {\bf 226} (2011), 2371--2469.
\newblock \arxiv{0908.3417}.

\bibitem{GalvezCarrillo-Kock-Tonks:1207.6404}
{\sc Imma G{\'a}lvez-Carrillo, Joachim Kock, {\rm and }Andrew Tonks}.
\newblock {\em Groupoids and {F}a{\`a} di {B}runo formulae for {G}reen
  functions in bialgebras of trees}.
\newblock Adv. Math. {\bf 254} (2014), 79--117.
\newblock \arxiv{1207.6404}.

\bibitem{GKT:1404.3202}
{\sc Imma G{\'a}lvez-Carrillo, Joachim Kock, {\rm and }Andrew Tonks}.
\newblock {\em Decomposition Spaces, Incidence Algebras and {M}{\"o}bius
  Inversion}. (Old omnibus version, not intended for publication).
\newblock Preprint, \arxiv{1404.3202}.

\bibitem{GKT:HLA}
{\sc Imma G{\'a}lvez-Carrillo, Joachim Kock, {\rm and }Andrew Tonks}.
\newblock {\em Homotopy linear algebra}.
\newblock Proc. Royal Soc. Edinburgh A {\bf 148} (2018) 293--325.
\newblock \arxiv{1602.05082}.

\bibitem{GKT:DSIAMI-2}
{\sc Imma G{\'a}lvez-Carrillo, Joachim Kock, {\rm and }Andrew Tonks}.
\newblock {\em Decomposition spaces, incidence algebras and {M}{\"o}bius
  inversion II: completeness, length filtration, and finiteness}.
\newblock Adv. Math. (2018),
\doi{10.1016/j.aim.2018.03.017}, in press.
\newblock \arxiv{1512.07577}.

\bibitem{GKT:MI}
{\sc Imma G{\'a}lvez-Carrillo, Joachim Kock, {\rm and }Andrew Tonks}.
\newblock {\em Decomposition spaces, incidence algebras and M\"obius inversion
  III: the decomposition space of {M}{\"o}bius intervals}.
\newblock  \newblock Adv. Math. (2018),
\doi{10.1016/j.aim.2018.03.018}, in press. 
\newblock \arxiv{1512.07580}.

\bibitem{GKT:restriction}
{\sc Imma G{\'a}lvez-Carrillo, Joachim Kock, {\rm and }Andrew Tonks}.
\newblock {\em Decomposition spaces and restriction species}.
\newblock Int. Math. Res. Notices (2018), in press, \doi{10.1093/imrn/rny089}.
\newblock \arxiv{1708.02570}.

\bibitem{GKT:ex}
{\sc Imma G{\'a}lvez-Carrillo, Joachim Kock, {\rm and }Andrew Tonks}.
\newblock {\em Decomposition spaces in combinatorics}.
\newblock Preprint, \arxiv{1612.09225}.

\bibitem{Gepner-Haugseng-Kock}
{\sc David Gepner, Rune Haugseng {\rm and }Joachim Kock}.
\newblock {\em $\infty$-operads as analytic monads}.
\newblock Preprint, \arxiv{1712.06469}.

\bibitem{Haigh}
{\sc John Haigh}.
\newblock {\em On the {M}\"obius algebra and the {G}rothendieck ring of a
  finite category}.
\newblock J. London Math. Soc. (2) {\bf 21} (1980), 81--92.

\bibitem{Illusie2}
{\sc Luc Illusie}.
\newblock {\em Complexe cotangent et d\'eformations. {II}}.
\newblock No. 283 in Lecture Notes in Mathematics. Springer-Verlag, Berlin,
  1972.

\bibitem{Johnstone:Conduche'}
{\sc Peter Johnstone}.
\newblock {\em A note on discrete {C}onduch\'e fibrations}.
\newblock Theory Appl. Categ. {\bf 5} (1999), 1--11.

\bibitem{JoniRotaMR544721}
{\sc Saj-nicole~A. Joni {\rm and }Gian-Carlo Rota}.
\newblock {\em Coalgebras and bialgebras in combinatorics}.
\newblock Stud. Appl. Math. {\bf 61} (1979), 93--139.

\bibitem{JoyalMR633783}
{\sc Andr{\'e} Joyal}.
\newblock {\em Une th\'eorie combinatoire des s\'eries formelles}.
\newblock Adv. Math. {\bf 42} (1981), 1--82.

\bibitem{Joyal:foncteurs-analytiques}
{\sc Andr{\'e} Joyal}.
\newblock {\em Foncteurs analytiques et esp\`eces de structures}.
\newblock In {\em Combinatoire \'enum\'erative (Montr{\'e}al/Qu{\'e}bec,
  1985)}, vol. 1234 of Lecture Notes in Mathematics, pp. 126--159. Springer,
  Berlin, 1986.

\bibitem{Joyal:qCat+Kan}
{\sc Andr{\'e} Joyal}.
\newblock {\em Quasi-categories and {K}an complexes}.
\newblock J. Pure Appl. Algebra {\bf 175} (2002), 207--222.

\bibitem{Joyal:CRM}
{\sc Andr{\'e} Joyal}.
\newblock {\em The theory of quasi-categories}.
\newblock In {\em Advanced Course on Simplicial Methods in Higher Categories,
  vol. II}.  No.~45 in Quaderns. CRM, Barcelona, 2008.
\newblock Available at 
  \url{http://mat.uab.cat/~kock/crm/hocat/advanced-course/Quadern45-2.pdf}.

\bibitem{Joyal-Kock:0908.2675}
{\sc Andr{\'e} Joyal {\rm and }Joachim Kock}.
\newblock {\em {F}eynman graphs, and nerve theorem for compact symmetric
  multicategories (extended abstract)}.
\newblock In {\em Proceedings of the 6th International Workshop on Quantum
  Physics and Logic (Oxford 2009)}, vol. 270 of Electr. Notes Theoret.
  Comp. Sci., pp. 105--113, 2011.
\newblock \arxiv{0908.2675}.

\bibitem{Joyal-Street:GLn}
{\sc Andr{\'e} Joyal {\rm and }Ross Street}.
\newblock {\em The category of representations of the general linear groups
  over a finite field}.
\newblock J. Algebra {\bf 176} (1995), 908--946.

\bibitem{Joyal-Tierney:0607820}
{\sc Andr{\'e} Joyal {\rm and }Myles Tierney}.
\newblock {\em Quasi-categories vs {S}egal spaces}.
\newblock In {\em Categories in algebra, geometry and mathematical physics},
  vol. 431 of Contemp. Math., pp. 277--326. Amer. Math. Soc., Providence, RI,
  2007.
\newblock \arxiv{math/0607820}.

\bibitem{Joyal-Tierney:Q47}
{\sc Andr{\'e} Joyal {\rm and }Myles Tierney}.
\newblock {\em Notes on simplicial homotopy theory}.
No.~47 in Quaderns. CRM, Barcelona, 2008.
\newblock Available at \hfill\hfill\linebreak
  \url{http://mat.uab.cat/~kock/crm/hocat/advanced-course/Quadern47.pdf}.

\bibitem{Kock:0807}
{\sc Joachim Kock}.
\newblock {\em Polynomial functors and trees}.
\newblock Int. Math. Res. Notices {\bf 2011} (2011), 609--673.
\newblock \arxiv{0807.2874}.

\bibitem{Kock:MFPS28}
{\sc Joachim Kock}.
\newblock {\em Data types with symmetries and polynomial functors over
  groupoids}.
\newblock In {\em Proceedings of the 28th Conference on the Mathematical
  Foundations of Programming Semantics (Bath, 2012)}, vol. 286 of Electr. Notes
  Theoret. Comp. Sci., pp. 351--365, 2012.
\newblock \arxiv{1210.0828}.

\bibitem{Kock:1109.5785}
{\sc Joachim Kock}.
\newblock {\em Categorification of {H}opf algebras of rooted trees}.
\newblock Cent. Eur. J. Math. {\bf 11} (2013), 401--422.
\newblock \arxiv{1109.5785}.

\bibitem{Kock:1411.3098}
{\sc Joachim Kock}.
\newblock {\em Perturbative renormalisation for not-quite-connected
  bialgebras}.
\newblock Lett. Math. Phys. {\bf 105} (2015), 1413--1425.
\newblock \arxiv{1411.3098}.

\bibitem{Kock:1407.3744}
{\sc Joachim Kock}.
\newblock {\em Graphs, hypergraphs, and properads}.
\newblock Collect. Math. {\bf 67} (2016), 155--190.
\newblock \arxiv{1407.3744}.

\bibitem{Kock:1512.03027}
{\sc Joachim Kock}.
\newblock {\em Polynomial functors and combinatorial {D}yson--{S}chwinger
  equations}.
  \newblock J. Math. Phys. {\bf 58} (2017), 041703, 36pp.
\newblock \arxiv{1512.03027}.

\bibitem{Kock-Weber:FdB}
{\sc Joachim Kock {\rm and }Mark Weber}.
\newblock {\em Fa\`a di {B}runo for operads and internal algebras}.
\newblock To appear in J. London Math. Soc.
\newblock \arxiv{1609.03276}.

\bibitem{Kreimer:9707029}
{\sc Dirk Kreimer}.
\newblock {\em On the {H}opf algebra structure of perturbative quantum field
  theories}.
\newblock Adv. Theor. Math. Phys. {\bf 2} (1998), 303--334.
\arxiv{q-alg/9707029}.

\bibitem{Lawvere:ordinal}
{\sc F.~William Lawvere}.
\newblock {\em Ordinal sums and equational doctrines}.
\newblock In B.~Eckmann, editor, {\em Seminar on triples and categorical
  homology theory, ETH 1966/67}, no.~80 in Lecture Notes in Mathematics, pp.
  141--155. Springer-Verlag, New York, 1967.
\newblock Reprinted in Repr. Theory Appl. Categ. {\bf 18} (2008). 
  (Available from \url{http://www.tac.mta.ca/tac/reprints/}).

\bibitem{Lawvere:statecats}
{\sc F.~William Lawvere}.
\newblock {\em State categories and response functors. {D}edicated to {W}alter
  {N}oll.}
\newblock Preprint (May 1986).

\bibitem{LawvereMenniMR2720184}
{\sc F.~William Lawvere {\rm and }Mat{\'i}as Menni}.
\newblock {\em The {H}opf algebra of {M}\"obius intervals}.
\newblock Theory Appl. Categ. {\bf 24} (2010), 221--265.

\bibitem{Leinster:1201.0413}
{\sc Tom Leinster}.
\newblock {\em Notions of {M}{\"o}bius inversion}.
\newblock Bull. Belg. Math. Soc. {\bf 19} (2012), 911--935.
\newblock \arxiv{1201.0413}.

\bibitem{Leroux:1975}
{\sc Pierre Leroux}.
\newblock {\em Les cat{\'e}gories de {M}{\"o}bius}.
\newblock Cahiers Topol. G{\'e}om. Diff. {\bf 16} (1976),
  280--282.

\bibitem{Leroux:IJM}
{\sc Pierre Leroux}.
\newblock {\em The isomorphism problem for incidence algebras of {M}\"obius
  categories}.
\newblock Illinois J. Math. {\bf 26} (1982), 52--61.

\bibitem{Luck:1989}
{\sc Wolfgang L{\"u}ck}.
\newblock {\em Transformation groups and algebraic {$K$}-theory}, vol. 1408 of
  Lecture Notes in Mathematics.
\newblock Springer-Verlag, Berlin, 1989.

\bibitem{Lurie:HTT}
{\sc Jacob Lurie}.
\newblock {\em Higher topos theory}, vol. 170 of Annals of Mathematics Studies.
\newblock Princeton University Press, Princeton, NJ, 2009.
\newblock Available from \url{http://www.math.harvard.edu/~lurie/}.

\bibitem{Lurie:HA}
{\sc Jacob Lurie}.
\newblock {\em Higher Algebra}.
\newblock Available from \url{http://www.math.harvard.edu/~lurie/}, 2013.

\bibitem{MacLane:categories}
{\sc Saunders {Mac~Lane}}.
\newblock {\em Categories for the working mathematician, second edition}.
\newblock No.~5 in Graduate Texts in Mathematics. Springer-Verlag, New York,
  1998.

\bibitem{Manchon:MR2921530}
{\sc Dominique Manchon}.
\newblock {\em On bialgebras and {H}opf algebras of oriented graphs}.
\newblock Confluentes Math. {\bf 4} (2012), 1240003, 10pp.
\newblock \arxiv{1011.3032}.

\bibitem{Manin:MR2562767}
{\sc Yuri~I. Manin}.
\newblock {\em A course in mathematical logic for mathematicians}, vol.~53 of
  Graduate Texts in Mathematics.
\newblock Springer, New York, second edition, 2010.
\newblock Chapters I--VIII translated from the Russian by Neal 
Koblitz; with new chapters by Boris Zilber and the author.

\bibitem{Ray-Schmitt}
{\sc Nigel Ray {\rm and} William Schmitt}.
\newblock {\em Combinatorial models for coalgebraic structures}.
\newblock Adv. Math. {\bf 138} (1998), 211--262.

\bibitem{Rezk:MR1804411}
{\sc Charles Rezk}.
\newblock {\em A model for the homotopy theory of homotopy theory}.
\newblock Trans. Amer. Math. Soc. {\bf 353} (2001), 973--1007.

\bibitem{Ringel:Hall}
{\sc Claus~Michael Ringel}.
\newblock {\em Hall algebras and quantum groups}.
\newblock Invent. Math. {\bf 101} (1990), 583--591.

\bibitem{Rota:Moebius}
{\sc Gian-Carlo Rota}.
\newblock {\em On the foundations of combinatorial theory. {I}. {T}heory of
  {M}\"obius functions}.
\newblock Z. Wahrscheinlichkeitstheorie und Verw. Gebiete {\bf 2} (1964),
  340--368.

\bibitem{Schiffmann:0611617}
{\sc Olivier Schiffmann}.
\newblock {\em Lectures on {H}all algebras}.
\newblock In {\em Geometric methods in representation theory. {II}}.
S\'emin. Congr. {\bf 24} (2012), 1--141. Soc. Math. France, Paris, 2012.
\newblock \arxiv{math/0611617}.

\bibitem{Schmitt:hacs}
{\sc William~R. Schmitt}.
\newblock {\em Hopf algebras of combinatorial structures}.
\newblock Canad. J. Math. {\bf 45} (1993), 412--428.

\bibitem{Schmitt:1994}
{\sc William~R. Schmitt}.
\newblock {\em Incidence {H}opf algebras}.
\newblock J. Pure Appl. Algebra {\bf 96} (1994), 299--330.

\bibitem{Street:fibsinbicats}
{\sc Ross Street}.
\newblock {\em Fibrations in bicategories}.
\newblock Cahiers Topol. G\'eom. Diff. {\bf 21} (1980), 111--160.

\bibitem{Street:categorical-structures}
{\sc Ross Street}.
\newblock {\em Categorical structures}.
\newblock In {\em Handbook of algebra, {V}ol.\ 1}, pp. 529--577. North-Holland,
  Amsterdam, 1996.

\bibitem{HoTT}
{\sc The {Univalent Foundations Program}}.
\newblock {\em Homotopy type theory: univalent foundations of mathematics}.
\newblock The Univalent Foundations Program, Princeton, NJ; Institute for
  Advanced Study (IAS), Princeton, NJ, 2013.
\newblock Available from \url{http://homotopytypetheory.org/book}.

\bibitem{Toen:0501343}
{\sc Bertrand To{\"e}n}.
\newblock {\em Derived {H}all algebras}.
\newblock Duke Math. J. {\bf 135} (2006), 587--615.
\newblock \arxiv{math/0501343}.

\bibitem{Waldhausen}
{\sc Friedhelm Waldhausen}.
\newblock {\em Algebraic {$K$}-theory of spaces}.
\newblock In {\em Algebraic and geometric topology ({N}ew {B}runswick,
  NJ, 1983)}, vol. 1126 of Lecture Notes in Mathematics, pp. 318--419.
  Springer, Berlin, 1985.


\bibitem{Weber:TAC13}
{\sc Mark Weber}.
\newblock {\em Generic morphisms, parametric representations and weakly
  {C}artesian monads}.
\newblock Theory Appl. Categ. {\bf 13} (2004), 191--234.

\bibitem{Weber:TAC18}
{\sc Mark Weber}.
\newblock {\em Familial 2-functors and parametric right adjoints}.
\newblock Theory Appl. Categ. {\bf 18} (2007), 665--732.

\end{thebibliography}
\end{document}